\pdfoutput=1
\documentclass[11pt,letterpaper]{article}

\usepackage[letterpaper,margin=1in]{geometry}
\usepackage[utf8]{inputenc}
\usepackage[T1]{fontenc}
\usepackage[dvipsnames]{xcolor}
\usepackage{amsmath,amssymb,amsthm,mathtools,mathrsfs}
\usepackage{enumitem}
\usepackage{graphicx}
\usepackage{float}
\usepackage{tikz}
\usetikzlibrary{arrows.meta,calc,decorations.pathreplacing,positioning}
\usepackage{microtype}
\microtypesetup{expansion=false}
\usepackage{url}
\usepackage[numbers]{natbib}
\usepackage{mathptmx}
\usepackage{aliascnt}
\usepackage[
  unicode=true,
  bookmarks=false,
  breaklinks=false,
  pdfborder={0 0 1},
  colorlinks=true
]{hyperref}

\newcommand\myshade{85}
\colorlet{mycitecolor}{YellowOrange}
\colorlet{myurlcolor}{Aquamarine}
\definecolor{baseblue}{HTML}{DCE8F0}
\definecolor{baseblueedge}{HTML}{64849A}
\definecolor{selectedpurple}{HTML}{79538A}
\definecolor{tailgold}{HTML}{D9AA5B}
\definecolor{softgray}{HTML}{6F747A}
\definecolor{shortteal}{HTML}{4E8991}
\definecolor{longpurple}{HTML}{85558E}
\hypersetup{
  linkcolor=selectedpurple!\myshade!black,
  citecolor=mycitecolor!\myshade!black,
  urlcolor=myurlcolor!\myshade!black
}

\newtheorem{theorem}{Theorem}[section]
\newaliascnt{lemma}{theorem}
\newtheorem{lemma}[lemma]{Lemma}
\aliascntresetthe{lemma}
\newaliascnt{proposition}{theorem}
\newtheorem{proposition}[proposition]{Proposition}
\aliascntresetthe{proposition}
\newaliascnt{corollary}{theorem}

\aliascntresetthe{corollary}
\newaliascnt{fact}{theorem}
\newtheorem{fact}[fact]{Fact}
\aliascntresetthe{fact}
\newaliascnt{conjecture}{theorem}

\aliascntresetthe{conjecture}
\theoremstyle{definition}
\newaliascnt{definition}{theorem}

\aliascntresetthe{definition}
\newaliascnt{remark}{theorem}
\newtheorem{remark}[remark]{Remark}
\aliascntresetthe{remark}

\usepackage[nameinlink,capitalize]{cleveref}
\crefname{fact}{Fact}{Facts}
\Crefname{fact}{Fact}{Facts}

\allowdisplaybreaks

\title{Improved Gradient Descent Lower Bounds Beyond Nesterov}
\author{
Yuhan Ye
\\MIT\\\texttt{yyh03@mit.edu}
\and
Kaizhao Liu
\\MIT\\\texttt{mrzt@mit.edu}
}

\date{\today}

\begin{document}
\maketitle

\begin{abstract}
We study how far gradient descent (GD) can be accelerated by predetermined stepsizes in smooth convex optimization. Going beyond the classical $\Omega(n^{-2})$ first-order oracle lower bound~\cite{NemirovskyYudin1983}, we prove an $\Omega(n^{-1.6342})$ non-anytime bound and an $\Omega(n^{-1.2408})$ anytime barrier. These improve the $\Omega(n^{-1.932})$ non-anytime result of~\cite{MaChen2026} and the $\Omega(n^{-4/3})$ anytime bound of~\cite{TsaiFatkhullinZhangHe2026}, respectively. Both bounds continue to hold when the stepsizes may be negative.
Our anytime lower bound shows that the \(O(n^{-\log_2(1+\sqrt{2})})\) convergence rate of non-anytime silver schedules~\cite{AltschulerParrilo2023,GrimmerShuWang2025Composing} is unattainable in the anytime case. This establishes a strict separation between the two settings.
\end{abstract}

\clearpage
\tableofcontents
\clearpage

\section{Introduction}
\label{sec:introduction}
\textbf{Gradient descent (GD)} is one of the simplest optimization algorithms, dating back nearly two hundred years to Cauchy~\cite{Cauchy1847}.
It updates
\[
  x_{k+1}=x_k-h_k\nabla f(x_k),
\]
where $h_k\in\mathbb R$ is fixed in advance. In this paper, we study how far this basic method can be accelerated by predetermined stepsizes in smooth convex optimization.

To formalize this question, we define the convergence rate of a stepsize schedule $H=(h_1,\ldots,h_n)$ by
\[
  R_n(H):=
  \sup_{d\in\mathbb N}
  \sup_{f\in\mathcal F_L(\mathbb R^d)}
  \sup_{x^\star\in\arg\min f}
  \sup_{x_1\in\mathbb R^d\setminus\{x^\star\}}
  \frac{f(x_{n+1})-f(x^\star)}
  {\frac{L}{2}\lVert x_1-x^\star\rVert^2},
\]
where $\mathcal F_L(\mathbb R^d)$ is the class of convex $L$-smooth functions on $\mathbb R^d$ with a nonempty set of minimizers. In the non-anytime (finite-horizon) setting, the schedule is designed for a fixed horizon $n$. In the anytime setting, one infinite schedule $h=(h_k)_{k\ge1}$ is used for every horizon, with $H_n=(h_1,\ldots,h_n)$.

It is a standard textbook result that GD with the constant stepsize $1/L$ achieves an $O(n^{-1})$ rate~\cite{LevitinPolyak1966,Nesterov2004}.
Classical acceleration approaches modify the GD iteration by introducing momentum or auxiliary sequences, as in Polyak's heavy-ball method~\cite{Polyak1964} and Nesterov's accelerated method~\cite{Nesterov1983}. 
Among the broader class of first-order methods, Nesterov's method attains the optimal $O(n^{-2})$ rate for smooth convex objectives, while the matching $\Omega(n^{-2})$ first-order oracle lower bound goes back to Nemirovsky and Yudin~\cite{NemirovskyYudin1983}.

For GD with predetermined stepsizes, since it is a restricted class of first-order methods, the classical $\Omega(n^{-2})$ lower bound continues to apply. For many years, however, the best known general upper bound remained the classical $O(n^{-1})$ rate. This leads to the fundamental question of whether a faster rate is possible. In recent years, a growing body of work surprisingly shows that GD itself can be accelerated beyond the classical $O(n^{-1})$ rate by using carefully designed stepsize schedules that use occasional long steps and recursive structure~\cite{Grimmer2024,AltschulerParrilo2023,GrimmerShuWang2025LongSteps,GrimmerShuWang2025Composing}.
The current best known non-anytime upper bound is $O(n^{-\log_2(1+\sqrt{2})})$, achieved by the silver schedule~\cite{AltschulerParrilo2023,GrimmerShuWang2025Composing}.
An anytime construction based on silver-schedule blocks achieves an $O(n^{-1.119})$ rate at every stopping time~\cite{ZhangLeeDuChen2025}.
This left open whether the generic $\Omega(n^{-2})$ lower bound could be strengthened  toward these upper bounds.

Recently, Ma and Chen~\cite{MaChen2026} proved an $\Omega(n^{-1.932})$ lower bound by cleverly constructing a hard function family. A subsequent refinement~\cite{Tsai2026} gave an exposition of the framework and improved this bound to $\Omega(n^{-\sqrt{3}})$. For the anytime case, \cite{TsaiFatkhullinZhangHe2026} provides an $\Omega(n^{-4/3})$ lower bound.\footnote{Throughout, we use an $\Omega(n^{-p})$ anytime lower bound as shorthand for the statement that no single infinite stepsize schedule achieves an $o(n^{-p})$ rate.}

\subsection{Contribution}
Despite the recent progress, substantial gaps still remain between the best known lower and upper bounds in both the non-anytime and anytime settings. In this paper, we narrow these gaps by improving the non-anytime lower bound to $\Omega(n^{-1.6342})$ and the anytime lower bound to $\Omega(n^{-1.2408})$.

\begin{theorem}
\label{thm:fixed-horizon}
There exists a constant $c>0$ such that, for every integer $n\ge1$ and every $H\in\mathbb R^n$,
\[
  R_n(H)\ge c n^{-1.6342}.
\]
\end{theorem}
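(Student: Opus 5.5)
The plan is to follow the Ma--Chen framework: exhibit, for each schedule $H$, a small explicit family of one-dimensional (or low-dimensional) convex $L$-smooth test functions and show that at least one of them forces $f(x_{n+1})-f^\star\gtrsim n^{-1.6342}\cdot\frac L2\|x_1-x^\star\|^2$. Normalize $L=1$, $x^\star=0$, $\|x_1\|=1$. The two basic test functions are the quadratic $f(x)=\frac{\lambda}{2}x^2$ with $\lambda\in(0,1]$, which gives $x_{n+1}=\prod_k(1-h_k\lambda)\,x_1$, and a Huber-type function $f(x)=\varepsilon|x|-\varepsilon^2/2$ (quadratic near $0$), on which GD moves with constant gradient $\pm\varepsilon$ while far from the origin. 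The quadratic family shows that the polynomial $p(\lambda)=\prod_k(1-h_k\lambda)$ must be small on $[\delta,1]$, forcing large total stepsize $S=\sum_k h_k$ (Chebyshev-type: $\sup_{[\delta,1]}|p|\ge$ something like $\exp(-cn\sqrt{\delta})$, and $p'(0)=-S$). The Huber family shows that a large $S$ is costly: with gradient $\varepsilon$ the iterate travels distance $\varepsilon S$, so choosing $\varepsilon\approx 1/S$ (with appropriate sign handling, since $h_k$ may be negative, using partial sums $S_j=\sum_{k\le j}h_k$) produces overshoot and hence error of order $\varepsilon$ unless the trajectory lands back near $0$, which constrains the prefix sums.

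Key steps in order. First, record the quadratic bound: for every $\lambda\in(0,1]$, $R_n(H)\ge \frac{\lambda}{2}p(\lambda)^2$, hence $R_n(H)\ge \max_\lambda \lambda p(\lambda)^2/2$. Second, the Huber bound: using a two-piece function that is linear with slope $\varepsilon$ outside a small neighborhood of $x^\star$, express the iterate explicitly in terms of partial sums, and derive $R_n(H)\gtrsim \min(\varepsilon,\ \dots)$ in terms of $\max_j |S_j|$ and $S$. Third, the new ingredient needed to beat $n^{-\sqrt3}$: a hybrid family, piecewise quadratic with curvature $\lambda$ on a middle region and linear tails, so that long steps which are "safe" for the pure quadratic (they sit at roots of $p$) cause a kick into the linear tail, and short steps after that cannot recover in time. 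This gives a recursive inequality linking the contribution of long steps at scale $\lambda$ with the effective number of short steps needed to damp at scale $\lambda/\rho$. Fourth, combine: optimize over a geometric grid of scales $\lambda_i=\rho^{-i}$, obtaining a recursion of the form $T(\text{scale})\le a\,T(\text{scale}/\rho)+\dots$, whose solution yields the exponent; the constant $1.6342$ arises as the root of the resulting transcendental/algebraic equation in $(a,\rho)$, verified numerically with a rational rounding.

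The main obstacle is the third step: designing the hybrid test function and proving the per-scale tradeoff inequality uniformly over arbitrary (including negative) stepsizes, since negative steps break the monotone-travel arguments. I would handle signs by working with $|1-h_k\lambda|$ and absolute partial sums, and treating the piecewise function's trajectory case by case according to which region each iterate occupies; I would first try to prove the tradeoff only for schedules that are worst-case at one scale and then induct over scales.
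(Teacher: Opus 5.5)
Your proposal has a genuine gap. The step that would have to carry the entire improvement, the ``hybrid'' piecewise quadratic/linear family and its per-scale tradeoff, is never constructed, and the families you do specify cannot beat the classical exponent.

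\textbf{Your first two families stop at $n^{-2}$.} Quadratics only see $p(\lambda)=\prod_k(1-h_k\lambda)$, and the plain Huber function only sees the partial sums. Take the Chebyshev-type schedule with $p(\lambda)=(-1)^nT_{2n+1}(\sqrt\lambda)/\bigl((2n+1)\sqrt\lambda\bigr)$. All its roots lie in $(0,1)$, so every $h_k>1$. For it, $\max_{\lambda\in[0,1]}\lambda p(\lambda)^2\le(2n+1)^{-2}$, while $\sum_kh_k=-p'(0)=2n(n+1)/3$. So both of your first two bounds are only $\Theta(n^{-2})$ on this schedule.

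\textbf{The missing step carries everything.} You give the hybrid family no definition and no inequality. You also give no reason why a one- or fixed-low-dimensional function could penalize many long steps at once. Likewise, the claim that $1.6342$ is the root of an equation in $(a,\rho)$ is asserted rather than derived.

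\textbf{This is not the Ma--Chen framework.} Their hard function lives in $\mathbb R^{q+1}$, with $q$ growing with the number of selected long steps. It is the Moreau envelope $f_C$ of the support function of $C=\operatorname{conv}\{0,g_1,\ldots,g_{q+1}\}$. Here $g_i\propto X_i-X_{i+1}$, with anchors $X_i=\lambda_ie_i$ on fresh orthonormal axes. Then $\nabla f_C=\Pi_C$ is constant on each block, and each selected long step lands on the next anchor and switches the gradient direction. The fresh directions are what stop the long steps from repeatedly exploiting the same one-dimensional overshoot. They yield the product bound of \cref{lem:hard-product}, which is the actual starting point of the paper's proof.

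\textbf{The quantitative half, where the exponent comes from, is also absent.} The paper proceeds as follows.
\begin{enumerate}[label=\textup{(\roman*)}]
  \item It selects the $q$ largest excesses with $q\in\operatorname*{arg\,min}_s D_s s^\alpha$, where $D_s$ is the total stepsize mass with the $s$ largest excesses removed. This gives both $D_q\le(n+1)^{1+\alpha}$ and the tail constraints $\sum_{s>p}\omega_s^\downarrow\ge q[(q/p)^\alpha-1]$ on the normalized excesses.
  \item It eliminates the normalized block lengths $x_i$ through $\Gamma_\lambda(w,z)=\sup_{x,y>0}\{\log K_{w,z}(x,y)-\lambda(x+y)\}$.
  \item It bounds $\sum_i\Gamma_\lambda(\omega_i,\omega_{i+1})$ by splitting the sum into two matchings and majorizing against the explicit weights $\bar w_s^{(q)}$, using symmetry, joint convexity and monotonicity of $\Gamma_\lambda$.
  \item It identifies the outermost pairing by submodularity and compares the result with $2q\int_0^{1/2}\Gamma_\lambda(W_\alpha(t),W_\alpha(1-t))\,dt$.
\end{enumerate}
The rate $n^{-(1+\alpha)}$ then follows whenever $J(\alpha,\lambda)\le0$, which is certified numerically at $(\alpha,\lambda)=(0.6342,0.4506)$.

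\textbf{Negative stepsizes.} Your treatment via $|1-h_k\lambda|$ and absolute partial sums does not address the multidimensional instance at all. The paper replaces $\widetilde H$ by the increments of the running maximum $M_k$ of the signed partial sums $P_k$. It then shows that on $f_C$ the signed trajectory equals the lifted one shifted by $(M_{k-1}-P_{k-1})g_i$. This is a nonnegative multiple of the current block gradient, so it preserves the projection onto $C$, and every lower bound transfers.
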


\begin{theorem}
\label{thm:anytime}
No infinite schedule $h=(h_k)_{k\ge1}\in\mathbb R^{\mathbb N}$, with $H_n=(h_1,\ldots,h_n)$, satisfies
\[
  R_n(H_n)=o(n^{-1.2408}).
\]
\end{theorem}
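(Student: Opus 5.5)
We argue by contradiction, via a family of hard one-dimensional (or low-dimensional) test functions, each of which certifies a lower bound on $R_n(H)$ as a function of a few summary statistics of the schedule. Suppose $h$ is an infinite schedule with $R_n(H_n)\le \varepsilon_n n^{-p}$, where $\varepsilon_n\to0$ and $p=1.2408$.

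\textbf{Step 1: two certificates.} The first uses the quadratic $f(x)=\tfrac{L}{2}x^2$ and the Huber-type functions $f_\delta(x)=L\delta|x|-\tfrac{L}{2}\delta^2$ for $|x|\ge\delta$ (quadratic inside). Since $x_{n+1}=\prod_k(1-Lh_k)x_1$ on the quadratic, it forces $\prod_k|1-Lh_k|$ to be small. On the Huber piece, with $x_1$ large, the iterates move by the constant amount $L\delta h_k$, so $f(x_{n+1})$ is controlled by $|\|x_1\|-L\delta\sum_k h_k|$. Optimizing over $\delta$ then gives $R_n(H)\gtrsim 1/(L\sum_k h_k)$, so that
\[
  S_n:=L\sum_{k\le n}h_k\gtrsim \varepsilon_n^{-1}n^{p}.
\]
The second certificate, in the style of \cite{MaChen2026,Tsai2026}, is a hard function that is quadratic near the minimizer and linear far away. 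A single long step $h_k\gg 1/L$ taken near the minimizer overshoots, placing the next iterate at a distance of order $Lh_k$ times its current distance. This penalizes any prefix of the schedule whose last large step is followed by too few "recovery'' steps. Quantitatively, if $t_n$ denotes the largest stepsize among $h_1,\ldots,h_n$ (normalized by $L$), the certificate should give roughly $R_n(H_n)\gtrsim t_n^{-1}\cdot g(\text{tail})$, where $g$ measures how much the steps after the peak can contract the resulting overshoot.

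\textbf{Step 2: anytime recursion.} The non-anytime case (\cref{thm:fixed-horizon}) only lets us choose $H$ once. In the anytime setting, however, the bound must hold at every horizon $n$, including horizons $m<n$ that end just after a long step. Take $m$ to be the index just after the largest step in a dyadic window $(n/2,n]$. The prefix $H_m$ must itself satisfy the first certificate, while the overshoot created by that step has to be repaired within the remaining steps. Combining the two certificates at two scales should yield an inequality of the form
\[
  S_n\le C\,S_{n/2}+C\,n\,\Phi\bigl(S_{n/2}/n^{p}\bigr),
\]
with $\Phi$ coming from the optimized hard function. Iterating over dyadic scales then shows that $S_n$ grows at most like $n^{q}$, where $q$ is the root of an explicit equation. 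The numerical value $1.2408$ should be the solution of that equation; in the non-anytime case the analogous fixed point gives $1.6342$.

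\textbf{Main obstacle.} The hardest step is Step 1's second certificate. We need a parametric hard function whose lower bound on $R_m(H_m)$ depends only on a couple of summary statistics, such as the cumulative sum $S_m$ and the largest recent step, and not on the fine structure of the schedule. Without that reduction the recursion in Step 2 cannot close. My first attempt would be a piecewise quadratic–linear function with breakpoint $\delta$ and curvature switching at a tuned threshold. The overshoot would be tracked through the one-dimensional dynamics, and the bound obtained by optimizing over $\delta$ and $x_1$. I would then check numerically that the resulting exponent is at least $1.2408$.

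Negative stepsizes need separate handling. They only make the quadratic certificate $\prod_k|1-Lh_k|$ harder to satisfy, and they decrease $S_n$, so the argument should go through. This requires a brief separate check.
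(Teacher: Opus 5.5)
There is a genuine gap: nothing in your sketch produces the exponent $1.2408$. In the paper this number is $2(1+\alpha)/(2+\alpha)$ with $\alpha=0.6342$. The parameter $\alpha$ enters only through the multi-dimensional Ma--Chen hard function of \cref{lem:hard-product}, which uses one fresh orthogonal direction per selected long step so that different long steps cannot reuse the same one-dimensional overshoot. That function is combined with the term-by-term product estimate, which holds when $J(\alpha,\lambda)<0$. Your certificates are a quadratic, the Huber function, and a one-dimensional quadratic--linear overshoot function. These are the same kind of one-dimensional instances that \cite{TsaiFatkhullinZhangHe2026} combine to obtain only $4/3$, which is what the same formula gives at $\alpha=1$ (the classical $n^{-2}$ exponent). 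Your ``main obstacle'', finding a one-dimensional certificate that depends on a couple of summary statistics, is therefore aimed in the wrong direction, and you give no indication of how one-dimensional dynamics could beat $4/3$.

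Several other steps are also unsupported. The functions $g$ and $\Phi$ are never specified, and the recursion $S_n\le CS_{n/2}+Cn\Phi(S_{n/2}/n^p)$ is not derived. The claim that its fixed point is $1.2408$, with $1.6342$ as an ``analogous fixed point'', is asserted rather than shown; in the paper $1.6342=1+\alpha$ comes from the sign of $J$, not from any recursion. Even your guessed form $R_n\gtrsim t_n^{-1}g(\cdot)$ decreases in the long step, which contradicts your own overshoot heuristic. The actual terminal-step bound is $R_n(H_n)\ge\bigl((h_n-1)/(1+h_{1:n-1})\bigr)^2$.

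Two ingredients close the argument. The first is a tail bound on the ranked excesses $a_1\ge a_2\ge\cdots$ of a prefix: $ma_m\le CD_m$ and $D_m\le C(n+1)^{1+\alpha}m^{-\alpha}$ whenever $R_n(H)D_m\le c_0$. This is proved by applying \cref{lem:hard-product} to the $q$ largest excesses, together with a version of the sequence inequality in which the $\ell$ largest weights are unconstrained (\cref{lem:truncated-product}).

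The second is a single horizon rather than a dyadic recursion:
\begin{enumerate}[label=\textup{(\roman*)}]
  \item Choose $n$ with $h_n=\max_{k\le n}h_k$, so the terminal-step bound controls every excess through $a_1$. A window maximum would not do this, because earlier steps could be larger.
  \item Split at $m_\star\approx\frac14R_n(H_n)^{-1/2}$.
  \item Bound the head by $m_\star a_1\le\frac14(1+h_{1:n})$ and the tail by $Cn^{1+\alpha}m_\star^{-\alpha}$.
  \item Conclude $h_{1:n}\lesssim n+n^{1+\alpha}R_n(H_n)^{\alpha/2}$. This contradicts the Huber bound $R_n(H_n)(1+2h_{1:n})\ge1$ if $R_n(H_n)=o(n^{-2(1+\alpha)/(2+\alpha)})$.
\end{enumerate}

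Finally, your treatment of negative stepsizes is not an argument. The certificates are governed by the running maximum $M_n$ of the partial sums, not by $\sum_k\widetilde h_k$, so a smaller signed sum transfers nothing by itself. The paper passes to the running-maximum schedule $h_k=M_k-M_{k-1}$. It then shows that on the Ma--Chen function the signed trajectory stays in the same projection blocks, offset along $+g_i$ (\cref{lem:running-maximum-lifting}).
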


Two aspects of our results are worth emphasizing.
\begin{enumerate}[label=\textup{(\roman*)},leftmargin=*]
  \item Our anytime lower bound shows that the non-anytime silver rate $O(n^{-\log_2(1+\sqrt{2})})$~\cite{AltschulerParrilo2023,GrimmerShuWang2025Composing} is not achievable in the anytime case. This establishes a strict separation between the non-anytime and anytime settings.
  \item Both lower bounds remain valid when negative stepsizes are allowed, resolving the corresponding question left open in~\cite{MaChen2026,TsaiFatkhullinZhangHe2026}.
\end{enumerate}

\begin{figure}[H]
  \centering
  \includegraphics[width=\textwidth]{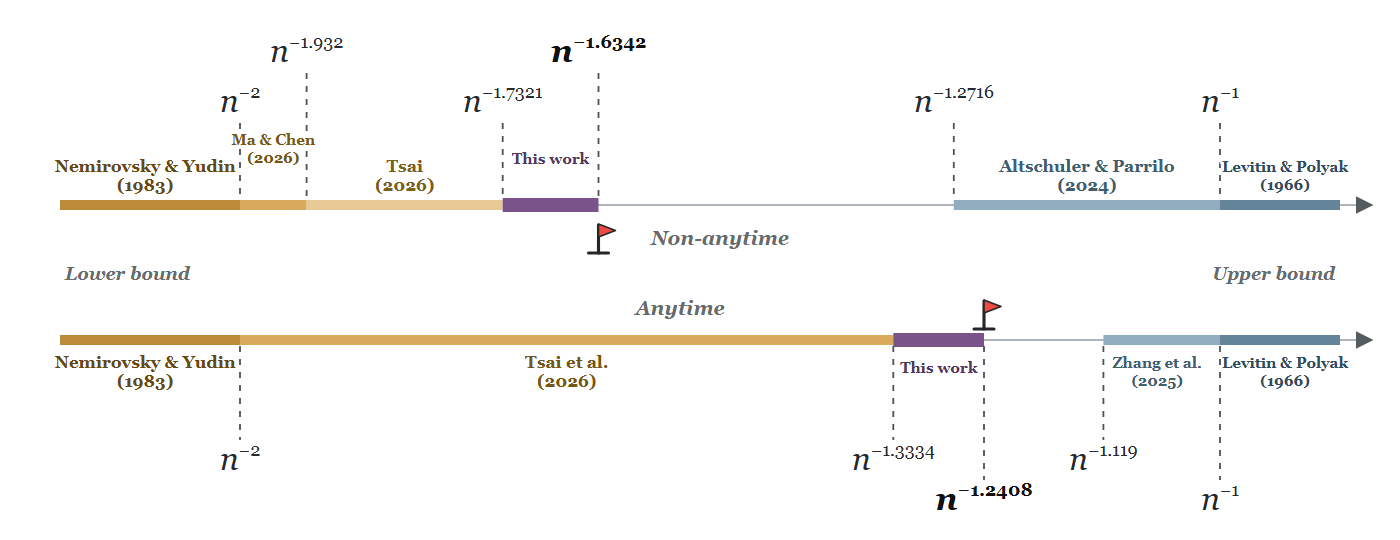}
  \caption{Improved lower bounds for GD.}
  \label{fig:rate-landscape}
\end{figure}

After reviewing additional related work in \cref{sec:related-work}, we give an overview of our techniques in \cref{sec:technical-overview-fixed}. We first consider the case of nonnegative stepsize schedules. At a high level, our proofs use the hard-function family introduced in~\cite[Theorem~4.1]{MaChen2026}.
After this construction, Ma and Chen summarize the selected long-step excesses by a single quantity~\cite[Section~5.4]{MaChen2026}. Tsai rewrites this quantity in terms of the harmonic mean of these excesses~\cite[Lemma~7]{Tsai2026}. In contrast, our main technical contribution is to retain every factor coupling consecutive selected long steps and estimate these factors term by term.
For the anytime case, we adapt the finite-to-anytime transfer from~\cite[Theorem~4.1]{TsaiFatkhullinZhangHe2026} and apply the same estimate at suitably chosen horizons. This yields the improved anytime lower bound.

The remainder of the paper is organized as follows.
\Cref{sec:nonanytime-proof} provides the full details of the non-anytime proof outlined in \cref{sec:technical-overview-fixed}.
\Cref{sec:anytime} extends the analysis to the anytime setting.
\Cref{app:hard-functions} recalls the hard-function construction, \cref{app:scalar-variational-problem} details how the scalar parameters are chosen, and \cref{app:anytime-proofs} contains the proofs deferred from \cref{sec:anytime}. Finally, \cref{app:negative-stepsizes} explains how we extend both lower bounds to schedules whose stepsizes may be negative.

\subsection{Related Work}
\label{sec:related-work}

\paragraph{Accelerating GD.}
Classical acceleration methods modify GD by incorporating information from previous iterations, as in Polyak's heavy-ball method~\cite{Polyak1964} and Nesterov's accelerated method~\cite{Nesterov1983}.
A complementary line of work uses the performance-estimation problem (PEP), which provides a systematic numerical framework for worst-case analysis~\cite{DroriTeboulle2014}. Interpolation conditions yield exact PEP formulations for fixed-step first-order methods~\cite{TaylorHendrickxGlineur2017}.
In the stepsize-only setting, PEP has also been used to search numerically for horizon-dependent GD schedules, providing finite-horizon evidence of acceleration~\cite{DasGuptaVanParysRyu2024,KamriHendrickxGlineur2026}. 
Within the classical $O(n^{-1})$ regime, improved constants were obtained using predetermined schedules with stepsizes increasing toward $2/L$~\cite{TeboulleVaisbourd2023} and, separately, periodic schedules with occasional long steps analyzed through multi-step certificates~\cite{Grimmer2024}.
For strongly convex quadratic objectives, stepsize-only acceleration was known much earlier through Chebyshev stepsizes~\cite{Young1953}. Fractal orderings were later introduced to control their unstable intermediate iterates~\cite{AgarwalGoelZhang2021}.

\paragraph{Silver and recursive stepsize schedules.}
Let $\rho=1+\sqrt{2}$. At horizons $n=2^k-1$, the recursively defined silver schedule attains the rate $O(n^{-\log_2\rho})$, where $\log_2\rho\approx1.2716$~\cite{AltschulerParrilo2023}.
A related right-heavy schedule attains the silver exponent for the objective gap, while its reversal attains the same exponent for the squared gradient norm~\cite{GrimmerShuWang2025LongSteps}. A subsequent composition framework and an independent concatenation construction extend this rate to every prescribed finite horizon~\cite{GrimmerShuWang2025Composing,ZhangJiang2026}.
Note that these guarantees hold at selected horizons (non-anytime case). The question of whether an infinite schedule can accelerate GD at every stopping time was posed in~\cite{KornowskiShamir2024} and answered affirmatively in~\cite{ZhangLeeDuChen2025}, which gives an anytime $O(n^{-1.119})$ rate.
The silver-step constructions also extend to projected and proximal gradient methods~\cite{BokAltschuler2025}, and to smooth strongly convex objectives with accelerated linear rates~\cite{AltschulerParrilo2025HedgingI}.

\paragraph{Lower bounds for GD.}
The classical $\Omega(n^{-2})$ first-order oracle lower bound for smooth convex optimization goes back to Nemirovsky and Yudin~\cite{NemirovskyYudin1983}, while Nesterov's accelerated method attains the matching $O(n^{-2})$ rate~\cite{Nesterov1983}. A standard quadratic-chain proof appears in~\cite[Section~2.1.2, Theorem~2.1.7]{Nesterov2004}, and we recall its geometric mechanism in \cref{fact:quadratic-chain}.
Stronger conclusions were previously known under additional structural restrictions. Time-invariant oblivious first-order methods cannot attain $O(n^{-\alpha})$ rates for any $\alpha>1$~\cite[Corollary~1]{ArjevaniShamir2016}. Within the recursively generated class of basic $f$-composable GD schedules, the best objective-gap rate is $\Theta(n^{-\log_2(1+\sqrt{2})})$~\cite[Theorem~5]{GrimmerShuWang2025Composing}.

For arbitrary predetermined nonnegative schedules, an $\Omega(n^{-1.932})$ lower bound was proved using a hard function adapted to the schedule's long steps~\cite{MaChen2026}. The same construction was then used to sharpen the non-anytime bound to $\Omega(n^{-\sqrt{3}})$~\cite{Tsai2026}.
For the anytime case, it was proved in~\cite{TsaiFatkhullinZhangHe2026} that no positive predetermined infinite schedule satisfies $R_n(H_n)=o(n^{-4/3})$.

\paragraph{The role of negative stepsizes.}
Surprisingly, it was shown in~\cite{ShugartAltschuler2025} that suitably designed stepsize schedules that periodically take negative values can make  gradient descent--ascent methods (GDA) converge on convex--concave problems for which standard GDA fails.
For smooth convex minimization, the lower bounds in~\cite{MaChen2026,TsaiFatkhullinZhangHe2026} were proved only for nonnegative schedules. Thus, it remains open whether negative stepsizes can improve the best achievable rate beyond what is possible with nonnegative schedules.

\section{Technique overview}
\label{sec:technical-overview-fixed}

In this section, we provide a high-level overview of techniques behind the proof of \cref{thm:fixed-horizon}. Replacing $f$ by $f/L$ and each $h_k$ by $Lh_k$ leaves the GD iterates and $R_n$ unchanged, so we assume $L=1$ throughout. We begin with nonnegative schedules $H\in[0,\infty)^n$. Following~\cite{MaChen2026}, we call a step $h_k$ long if $h_k>1$ and short if $h_k\leq 1$.
Let 
\[
  r:=\#\{k:h_k>1\}
\]
be the number of long steps.
For an integer $1\le q\le r$, select $q$ long steps
\[
  0<t_1<\cdots<t_q\le n,
  \qquad h_{t_i}>1,
\]
where $t_0=0$ and $t_{q+1}=n+1$. 
Define $S_i$ to be the stepsize accumulation between the selected long steps, that is,
\[
  S_i:=h_{t_{i-1}+1:t_i-1},
  \qquad 1\le i\le q+1,
\]
where $h_{a:b}:=\sum_{k=a}^b h_k$, with an empty sum equal to zero.  
An illustration of these concepts is provided in \cref{fig:hS}.

\begin{figure}[htbp]
  \centering
  \begin{tikzpicture}[font=\small]
  \begin{scope}[x=.58cm,y=1.05cm]
    \input{figures/stepsize-chronology-body}
  \end{scope}
\end{tikzpicture}
  \caption{The selected long steps are $h_{t_i}>1$, and each brace marks the stepsize accumulation $S_i$ between the selected long steps.}
  \label{fig:hS}
\end{figure}

\begin{lemma}
\label{lem:hard-product}
For every such selection,
\begin{equation}
\label{eq:hard-product}
  R_n(H)\ge
  \frac{1}{S_q+h_{t_q}+2S_{q+1}+1}
  \prod_{i=1}^{q-1}
  \frac{S_{i+1}+h_{t_{i+1}}}
  {S_i+h_{t_i}+S_{i+1}+h_{t_{i+1}}}
  \prod_{i=1}^{q}\frac{h_{t_i}-1}{S_i+1}.
\end{equation}
\end{lemma}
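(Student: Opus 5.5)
We will prove \eqref{eq:hard-product} by instantiating the hard-function family of \cite[Theorem~4.1]{MaChen2026} (recalled in \cref{app:hard-functions}) and then computing the GD trajectory on it exactly. The family is built from one-dimensional Huber-type pieces. Each piece behaves like the quadratic $\tfrac12 x^2$ near its minimizer, so a short step $h\le1$ contracts toward it by the factor $1-h\in[0,1)$. Far from the minimizer each piece is linear with a small slope, so a long step $h_{t_i}>1$ overshoots past a kink and lands in a linear region. The free parameters are the kink locations and slopes, one per selected long step. Our plan is to keep all of these parameters free, with no averaging across $i$. We reduce $R_n(H)$ to a product of per-block ratios and then choose the parameters block by block so that each factor in \eqref{eq:hard-product} appears.

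\textbf{Step 1: invariant along a block.} In the linear regime the gradient is constant, so a stretch of steps with total stepsize $S$ moves the iterate by exactly $S$ times the slope. We will first show the following invariant for block $i$, consisting of the short steps of $S_i$ followed by the long step $h_{t_i}$. If the iterate enters block $i$ at distance $D_{i-1}$ from the current minimizer in the rescaled coordinates, and the slope $g_i$ is chosen so that the iterate reaches the kink exactly at step $t_i$, then the long step leaves a residual proportional to $h_{t_i}-1$, while the budget consumed is proportional to $S_i+1$. This is where the factor $(h_{t_i}-1)/(S_i+1)$ comes from.

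\textbf{Step 2: coupling consecutive blocks.} The fraction $\frac{S_{i+1}+h_{t_{i+1}}}{S_i+h_{t_i}+S_{i+1}+h_{t_{i+1}}}$ should come from optimizing the single parameter linking blocks $i$ and $i+1$. That parameter is the position of the $(i+1)$-st kink relative to the $i$-th. We must choose it so that the same function remains convex and $1$-smooth, which forces monotone slopes. We also need the iterate to stay in the linear part of piece $i+1$ during $S_{i+1}+h_{t_{i+1}}$, while still being controlled by piece $i$ over the preceding $S_i+h_{t_i}$. We expect this to be a two-term convex-combination constraint, and choosing the split proportionally gives exactly this ratio.

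\textbf{Step 3: tail and normalization.} After the last selected long step, the remaining accumulation $S_{q+1}$ only contracts the final quadratic piece. The standard bound $f(x_{n+1})-f^\star\ge \|x_1-x^\star\|^2/(2(\text{total step}+1))$, in the spirit of \cref{fact:quadratic-chain} and the $1/(2T+1)$ constant-step lower bound, then yields the prefactor $1/(S_q+h_{t_q}+2S_{q+1}+1)$. The factor $2$ on $S_{q+1}$ reflects that the objective is quadratic in the displacement. Multiplying the per-block ratios telescopes $D_q/D_0$ and gives \eqref{eq:hard-product}.

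\textbf{Main obstacle.} We expect the main difficulty to be verifying that one single convex $1$-smooth function realizes all $q$ blocks simultaneously with the prescribed parameters. In particular, the monotonicity of slopes required by convexity must be compatible with the proportional choices of Step 2. We also have to check that the short steps do not prematurely push the iterate across a kink when a block contains no short steps ($S_i=0$). We will handle this by taking the parameters from \cite[Theorem~4.1]{MaChen2026} verbatim and checking the needed inequalities there, rather than building a new construction.
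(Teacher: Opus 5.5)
There is a genuine gap. Your Steps 1--3 describe a one-dimensional mechanism: Huber pieces along a line, kink positions, monotone slopes, and a telescoping distance ratio $D_q/D_0$. That is not the construction of \cite[Theorem~4.1]{MaChen2026}, and the accounting you sketch does not produce \eqref{eq:hard-product}. If each long step leaves a residual distance proportional to $h_{t_i}-1$ and the objective is quadratic in that distance, telescoping gives squared factors $\bigl((h_{t_i}-1)/(S_i+1)\bigr)^2$, as in the one-dimensional bound \eqref{eq:app-asymmetric-huber-bound}. The factors in \eqref{eq:hard-product} are unsquared. Your Step 2 also names no inequality from which the coupling ratio would follow.

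The missing idea is to give every block a fresh orthogonal direction. Take anchors $X_i=\lambda_ie_i$ with $e_1,\ldots,e_{q+1}$ orthonormal. Put $T_i:=S_i+h_{t_i}$ for $i\le q$ and $T_{q+1}:=1+2S_{q+1}$. Set $g_i:=(X_i-X_{i+1})/T_i$, $g_{q+1}:=X_{q+1}/T_{q+1}$, $C:=\operatorname{conv}\{0,g_1,\ldots,g_{q+1}\}$, and $f_C(x)=\tfrac12\lVert x\rVert^2-\tfrac12 d(x,C)^2$. Then $\nabla f_C=\Pi_C$, and convexity and $1$-smoothness are automatic, so the ``monotone slopes'' obstacle you anticipate never arises. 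Block $i$ visits $X_i-sg_i$ for the partial sums $0\le s\le S_i$, and the long step lands exactly on $X_{i+1}$. The only free parameters are $\gamma_i=\lambda_{i+1}/\lambda_i$. Both factors you are trying to explain come from a single projection test at the last short iterate $x=X_i-S_ig_i$. Since $\langle X_i,g_{i+1}\rangle=0$ and $\langle g_i,g_{i+1}\rangle=-\lambda_{i+1}^2/(T_iT_{i+1})$, the condition $\langle x-g_i,g_{i+1}-g_i\rangle\le0$ is equivalent to
\[
  \gamma_i^2\le\frac{h_{t_i}-1}{S_i+1}\cdot\frac{T_{i+1}}{T_i+T_{i+1}}.
\]
The remaining tests follow from this one, using $h_{t_i}>1$: those against $0$, against $g_{i-1}$, against nonadjacent $g_j$, and at earlier iterates of the block. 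So the coupling ratio is the obtuse-angle test against the neighboring vertex $g_{i+1}$, not an optimized kink position.

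Step 3 also fails as written. The terminal block must not lie in a quadratic region that short steps contract, since unit steps would then reach the minimizer. It stays in the affine region with gradient $g_{q+1}$. Hence $f_C(x_{n+1})=\lambda_{q+1}^2/(2T_{q+1})$, and the normalized gap is exactly $T_{q+1}^{-1}\prod_{i=1}^q\gamma_i^2$. The ``standard bound'' you quote is not a general inequality; a quadratic with a unit step violates it, and it only describes the Huber instance. The term $S_q+h_{t_q}$ in the prefactor comes from the $i=q$ coupling: $T_{q+1}^{-1}\cdot T_{q+1}/(T_q+T_{q+1})=1/(S_q+h_{t_q}+2S_{q+1}+1)$. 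Saturating every $\gamma_i^2$ then gives \eqref{eq:hard-product} exactly.

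Your fallback of taking Ma--Chen's parameters verbatim is effectively what the paper does: it cites \cite[Theorem~4.1]{MaChen2026} and sketches the geometry in \cref{app:hard-functions}. But those parameters are amplitude ratios of orthogonal anchors, not kink positions and slopes, so your Steps 1--3 do not connect to that proof.
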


This is the lower bound in~\cite[Theorem~4.1]{MaChen2026}; see also~\cite[Lemma~6]{Tsai2026}.  
The lemma reduces the original problem to establishing a lower bound for the right-hand side of \eqref{eq:hard-product}, which involves only the sequence $H$. 
When no long step is selected, the construction based on the classical Huber loss gives
\begin{equation}\label{eq:Huber}
  R_n(H)\ge \frac{1}{1+2\sum_{k=1}^n h_k}.
\end{equation}
We briefly recall the proof of \eqref{eq:Huber} in \cref{huberfact}.
A nonempty selection $q>0$ allows \eqref{eq:hard-product} to adapt flexibly to the long steps, limiting the acceleration they might otherwise provide in \eqref{eq:Huber}.
The lemma is proved using a hard function tailored to the schedule $H$ and the selected long steps. \Cref{app:hard-functions} gives an intuitive explanation of the construction.

Based on \cref{lem:hard-product}, Ma and Chen bound the resulting product using a single quantity that summarizes the selected long-step excesses~\cite[Section~5.4]{MaChen2026}. Tsai derives a related bound in terms of the harmonic mean of these excesses~\cite[Lemma~7]{Tsai2026} and then uses a counting function~\cite[Lemma~8]{Tsai2026}. We instead retain each factor coupling two consecutive selected long steps and estimate these factors term by term.

\paragraph{Reduction to a sequence inequality.}

For $w,z,x,y>0$, define
\begin{equation}
\label{eq:kernel}
  K_{w,z}(x,y):=
  \frac{\sqrt{xy}\,(w+z+x+y)}
  {\sqrt{wz(w+x)(z+y)}}.
\end{equation}
Given any sequence $\omega_1,\ldots,\omega_q$, write $\omega_1^\downarrow\ge\cdots\ge\omega_q^\downarrow$ for its decreasing rearrangement.
Then the problem of proving an $\Omega(n^{-(1+\alpha)})$ lower bound for \eqref{eq:hard-product} can be reduced to the following sequence inequality.

\begin{lemma}
\label{lem:fixed-sequence-reduction}
Fix $\alpha>0$.  
Suppose there is a constant $C_\alpha<\infty$, depending only on $\alpha$, with the following property:  
For every integer $q\geq 2$ and any positive sequences $x_1,\ldots,x_q$ and $\omega_1,\ldots,\omega_q$ such that 
\begin{equation}\label{eq:sequence-constraints}
  \sum_{i=1}^q x_i<q,
  \qquad
  \sum_{s=p+1}^q\omega_s^\downarrow
  \ge q\left[\left(\frac qp\right)^\alpha-1\right]
  \quad (\forall 1\le p<q),
\end{equation}
we have
\begin{equation}
\label{eq:sequence-bound}
  E_{\mathrm{end}}
  \prod_{i=1}^{q-1}
  K_{\omega_i,\omega_{i+1}}(x_i,x_{i+1})
  \le C_\alpha,
\end{equation}
where
\begin{equation}
\label{eq:endpoint-factor}
  E_{\mathrm{end}}:=
  \sqrt{x_1x_q}
  \sqrt{\frac{1+x_1/\omega_1}{1+x_q/\omega_q}}
  \left[
    1+\frac{x_q+2\left(q-\sum_{i=1}^q x_i\right)}{\omega_q}
  \right].
\end{equation}
Then, $R_n(H)\ge c_\alpha(n+1)^{-(1+\alpha)}$ for every $n$ and every $H\in[0,\infty)^n$.
\end{lemma}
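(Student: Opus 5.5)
The plan is to combine \cref{lem:hard-product} with the Huber bound \eqref{eq:Huber} through a case split. Then, after a suitable normalization of the selected long steps, the right-hand side of \eqref{eq:hard-product} should become exactly $1/(E_{\mathrm{end}}\prod_i K_{\omega_i,\omega_{i+1}}(x_i,x_{i+1}))$ times a scale factor. The hypothesis \eqref{eq:sequence-bound} then finishes the argument.

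\textbf{Step 1 (algebraic identity).} Fix a selection $t_1<\cdots<t_q$ and a scale $\lambda>0$. Set
\[
  \omega_i:=(h_{t_i}-1)/\lambda,\qquad x_i:=(S_i+1)/\lambda .
\]
Then $S_i+h_{t_i}=\lambda(x_i+\omega_i)$ and $(h_{t_i}-1)/(S_i+1)=\omega_i/x_i$. Each coupling factor in \eqref{eq:hard-product} becomes
\[
  \frac{x_{i+1}+\omega_{i+1}}{x_i+\omega_i+x_{i+1}+\omega_{i+1}} .
\]
Every inverse kernel $K^{-1}_{\omega_i,\omega_{i+1}}(x_i,x_{i+1})$ contains the factor $\sqrt{\omega_i\omega_{i+1}(\omega_i+x_i)(\omega_{i+1}+x_{i+1})}/\sqrt{x_ix_{i+1}}$. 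Multiplying these over $i=1,\ldots,q-1$ reproduces $\prod_i\omega_i/x_i$ times the coupling factors, up to endpoint square roots involving $i=1$ and $i=q$. Those leftover endpoint terms, together with the final factor $1/(S_q+h_{t_q}+2S_{q+1}+1)$, should be exactly what $E_{\mathrm{end}}$ collects. For this to work, the quantity $q-\sum x_i$ must play the role of $S_{q+1}/\lambda$. I would therefore choose $\lambda$ so that $\lambda q$ matches the total budget $n+1$, that is, $\sum_{i\le q+1}(S_i+1)$ plus the long-step contributions. With this choice $\sum_{i\le q}x_i<q$ holds automatically, and $S_{q+1}\le\lambda(q-\sum x_i)$. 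The outcome is
\[
  R_n(H)\ \ge\ \frac{\text{const}}{\lambda\,E_{\mathrm{end}}\prod_{i=1}^{q-1}K_{\omega_i,\omega_{i+1}}(x_i,x_{i+1})} .
\]
I would verify this identity carefully first, since every later step depends on it.

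\textbf{Step 2 (case split and choice of $q$).} Let $e_1\ge e_2\ge\cdots\ge e_r$ be the sorted long-step excesses $h_k-1$.
\begin{itemize}
  \item If $\sum_k h_k\lesssim n^{1+\alpha}$, then \eqref{eq:Huber} already gives $R_n(H)\gtrsim n^{-(1+\alpha)}$, and we are done.
  \item Otherwise the excesses carry a large total mass. In that case I would pick the selection to be the long steps in time order and choose $q$ (and thereby $\lambda\approx(n+1)/q$) as an extremal index: the largest $q$ for which the normalized tail sums satisfy the growth condition in \eqref{eq:sequence-constraints}.
\end{itemize}
The intent is that if the tail condition failed at some $p<q$, then selecting only the top $p$ steps, or applying the Huber bound to the steps beyond them, would itself already yield the $n^{-(1+\alpha)}$ bound. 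Equivalently, the power law $(q/p)^\alpha$ in \eqref{eq:sequence-constraints} is the self-similar threshold below which the coarser selection wins.

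\textbf{Step 3 (conclusion).} Once \eqref{eq:sequence-constraints} holds, the hypothesis gives $E_{\mathrm{end}}\prod_i K\le C_\alpha$. Combining this with Step 1 yields $R_n(H)\ge c/(\lambda C_\alpha)$. The remaining task is to check that $\lambda$, together with the normalization tying $\omega$ to the total mass $\gtrsim n^{1+\alpha}$, produces $(n+1)^{-(1+\alpha)}$. Degenerate cases are handled separately: $r\le1$ or $q=1$ via \eqref{eq:Huber} directly or via the single-factor version of \eqref{eq:hard-product}.

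\textbf{Main obstacle.} I expect the hardest part to be Step 2, namely choosing $q$ and the scale so that the tail inequality holds for \emph{every} $p<q$ simultaneously while the exponent bookkeeping still closes. My first attempt would be a stopping-time argument on $p\mapsto\sum_{s>p}e_s$: take the first $p$ at which the inequality fails and show that this $p$ gives a valid smaller instance or a Huber-type win. If that does not close, I would induct on $q$.
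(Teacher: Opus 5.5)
Your overall architecture matches the paper's: the top-$q$ selection in \cref{lem:hard-product}, rescaling into the sequence variables, and Huber plus the one-step bound for the degenerate cases. However, the two steps where the proof actually happens are either wrong or left open.

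\textbf{The normalization.} Write $a_1\ge\cdots\ge a_r$ for the sorted excesses (your $e_s$), and set $B:=1+\sum_k\min\{h_k,1\}\le n+1$ and $D_s:=B+\sum_{\ell>s}a_\ell$. For the top-$q$ selection, $\sum_{i=1}^{q+1}(S_i+1)=D_q$. This is the total $1+\sum_k h_k$ with only the $q$ \emph{selected} excesses removed, and the scale must be $\lambda=D_q/q$. Then $q-\sum_{i\le q}x_i=(S_{q+1}+1)/\lambda>0$, and your Step~1 identity goes through.

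Your choice $\lambda\approx(n+1)/q$ does not work. The unselected excesses sit inside the $S_i$, so $\sum_{i\le q}(S_i+1)$ can be much larger than $n+1$, and then $\sum_i x_i<q$ fails. Moreover, if that scale were usable, Step~3 would give $R_n(H)\gtrsim q/(n+1)$ for every schedule, which the silver schedule rules out. For the same reason, the ``total mass $\gtrsim n^{1+\alpha}$'' is not the relevant mechanism in Step~3. What is needed is an \emph{upper} bound on $D_q/q$, and the proposal never produces one.

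\textbf{The choice of $q$.} This is the missing idea, and it dissolves your ``main obstacle.'' With $\lambda=D_q/q$ one has $\omega_s^\downarrow=qa_s/D_q$ and $D_q+\sum_{s=p+1}^q a_s=D_p$. So the $p$-th tail inequality in \eqref{eq:sequence-constraints} is \emph{exactly} $D_p\,p^\alpha\ge D_q\,q^\alpha$. Hence all of them hold simultaneously if and only if $q$ is a prefix minimizer of $F_s:=D_s\,s^\alpha$.

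Now choose $q\in\operatorname*{arg\,min}_{1\le s\le r}F_s$. Once the normalization is corrected, your ``largest admissible $q$'' is exactly the last such minimizer, so no stopping-time argument or induction is needed. Comparing with $s=r$, where $D_r=B\le n+1$ and $r\le n$, this choice also yields the scale bound
\[
  D_q\le D_q\,q^\alpha\le D_r\,r^\alpha\le(n+1)^{1+\alpha}.
\]
For $q\ge2$ this gives $R_n(H)\ge q/(C_\alpha D_q)\ge C_\alpha^{-1}(n+1)^{-(1+\alpha)}$. For $q=1$, where the hypothesis is unavailable, one shows $R_n(H)\ge1/(4D_1)$ directly: use Huber if $a_1\le D_1$, and the one-step case of \eqref{eq:hard-product} otherwise. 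The same bound on $D_1$ then finishes. Without this potential $D_s\,s^\alpha$, your Steps~2--3 do not close.
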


The reduction makes two choices that are not apparent from \eqref{eq:hard-product}. Given a cutoff $q$, it selects the $q$ largest excesses, while the cutoff itself is chosen from the schedule $H$ according to \eqref{eq:choice-of-q}. The right-hand side of \eqref{eq:hard-product} is then normalized so that the variables $x_i$ have total mass below $q$, and the decreasing rearrangement of the weights $\omega_i$ satisfies the tail inequalities in \eqref{eq:sequence-constraints}.
The proof is provided in \cref{sec:fixed-sequence-reduction-proof}.

\paragraph{Eliminating the variables \texorpdfstring{$x_i$}{xi}.}

To make \eqref{eq:sequence-bound} tractable, we first eliminate the variables $x_i$.  
For a parameter $\lambda>0$ to be chosen later, define
\begin{equation}
\label{eq:gamma-definition}
  \Gamma_\lambda(w,z):=
  \sup_{x,y>0}
  \left\{
    \log K_{w,z}(x,y)-\lambda(x+y)
  \right\}.
\end{equation}
By definition,
\begin{equation}
\label{eq:gamma-envelope}
  \log K_{w,z}(x,y)
  \le \lambda(x+y)+\Gamma_\lambda(w,z).
\end{equation}
Applying this inequality to every adjacent factor and using $\sum_i x_i<q$ gives the next reduction.

\begin{lemma}
\label{lem:envelope-reduction}
Fix $\alpha,\lambda>0$. 
There is a constant $C_{\alpha,\lambda}<\infty$ such that, for every integer $q\ge2$, every pair of positive sequences $(x_i)_{i=1}^q$ and $(\omega_i)_{i=1}^q$ satisfying \eqref{eq:sequence-constraints} obeys
\begin{equation}
\label{eq:envelope-reduction}
  E_{\mathrm{end}}
  \prod_{i=1}^{q-1}K_{\omega_i,\omega_{i+1}}(x_i,x_{i+1})
  \le C_{\alpha,\lambda}
  \exp\!\left\{
    2\lambda q+
    \sum_{i=1}^{q-1}\Gamma_\lambda(\omega_i,\omega_{i+1})
  \right\}.
\end{equation}
\end{lemma}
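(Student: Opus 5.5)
The plan is to apply the envelope inequality \eqref{eq:gamma-envelope} to each adjacent factor and to absorb the endpoint factor $E_{\mathrm{end}}$ into the slack this leaves. The key observation is that the constraints \eqref{eq:sequence-constraints} bound every weight $\omega_i$ below by a constant depending only on $\alpha$. That lower bound is what makes $E_{\mathrm{end}}$ controllable by exponentials of the endpoint variables.

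\emph{Step 1: bound the adjacent factors.} Apply \eqref{eq:gamma-envelope} to $K_{\omega_i,\omega_{i+1}}(x_i,x_{i+1})$ for $i=1,\dots,q-1$ and sum. Each interior $x_i$ appears twice and $x_1,x_q$ once each, so
\[
\sum_{i=1}^{q-1}\log K_{\omega_i,\omega_{i+1}}(x_i,x_{i+1})\le \lambda\Bigl(2\sum_{i=1}^q x_i - x_1-x_q\Bigr)+\sum_{i=1}^{q-1}\Gamma_\lambda(\omega_i,\omega_{i+1}).
\]
Write $D:=q-\sum_i x_i>0$. Then $2\lambda\sum_i x_i=2\lambda q-2\lambda D$. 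It therefore suffices to show
\[
E_{\mathrm{end}}\le C_{\alpha,\lambda}\exp\{\lambda(x_1+x_q)+2\lambda D\}.
\]

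\emph{Step 2: a uniform lower bound on the weights.} Take $p=q-1$ in the tail constraint. This gives
\[
\min_i\omega_i=\omega_q^\downarrow\ge q\bigl[(q/(q-1))^\alpha-1\bigr].
\]
By Bernoulli's inequality or convexity, $(1+\tfrac1{q-1})^\alpha-1\ge \min(\alpha,1)\cdot c/(q-1)$ for a suitable absolute constant $c$. In any case the function $q\mapsto q[(q/(q-1))^\alpha-1]$ is positive for every $q\ge2$ and tends to $\alpha$ as $q\to\infty$, so it is bounded below by some $c_\alpha>0$ uniformly in $q\ge2$. Hence $\omega_i\ge c_\alpha$ for every $i$.

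\emph{Step 3: control the endpoint factor.} Using $\omega_q>0$, we have $1/(1+x_q/\omega_q)\le 1$. Using $\omega_1,\omega_q\ge c_\alpha$, we get
\[
E_{\mathrm{end}}\le \sqrt{x_1x_q}\,\sqrt{1+x_1/c_\alpha}\,\Bigl(1+\frac{x_q+2D}{c_\alpha}\Bigr).
\]
This is a polynomial in the nonnegative quantities $x_1,x_q,D$. Each polynomial factor is dominated by a matching exponential: $\sqrt{x}\sqrt{1+x/c}\le C e^{\lambda x}$, and $\sqrt{y}\,(1+(y+2D)/c)\le C e^{\lambda y+2\lambda D}$, since $1+a+b\le(1+a)(1+b)$ and $(1+t)\le C_\lambda e^{\lambda t}$. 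The constants depend only on $\lambda$ and $c_\alpha$. Combining with Step 1 yields \eqref{eq:envelope-reduction}.

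\emph{Main obstacle.} The delicate point is Step 2. Without a uniform lower bound on $\omega_q$, the bracket in $E_{\mathrm{end}}$ behaves like $\omega_q^{-1}$. The compensating factor $\sqrt{\omega_q/(\omega_q+x_q)}$ only recovers $\omega_q^{1/2}$, and the $\Gamma_\lambda$ terms do not absorb the remaining blow-up. So I would first make sure that the $p=q-1$ tail inequality indeed gives a $q$-independent positive lower bound for all $q\ge2$. The check is that $q[(q/(q-1))^\alpha-1]$ is decreasing toward $\alpha$, or at least bounded below by $\min\{\alpha,\,2(2^\alpha-1)\}$. Everything else is routine.
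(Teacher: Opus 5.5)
Your proposal is correct and follows essentially the same route as the paper: apply \eqref{eq:gamma-envelope} to each factor, use $\sum_{i=1}^{q-1}(x_i+x_{i+1})=2q-(2\Delta+x_1+x_q)$, take $p=q-1$ in \eqref{eq:sequence-constraints} to get a uniform lower bound on every $\omega_i$, and absorb the polynomial endpoint factor into $e^{-\lambda(2\Delta+x_1+x_q)}$. The differences are cosmetic: the paper derives the explicit bound $\omega_q^\downarrow\ge\alpha$ (via $e^u-1\ge u$ and $\log(1+t)\ge t/(1+t)$) and bounds the endpoint factor in one step by $C_\alpha(1+T)^{5/2}e^{-\lambda T}$ with $T=x_1+x_q+\Delta$, whereas you bound it factor by factor.
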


We are left with a sum of consecutive $\Gamma_\lambda$ terms.  
To proceed, we record some basic properties of $\Gamma_\lambda$.

\begin{lemma}[Properties of $\Gamma_\lambda$]
\label{lem:gamma-geometry}
For every $\lambda>0$, $\Gamma_\lambda$ has the following properties.
\begin{enumerate}
  \item \emph{Smoothness.} For every $w,z>0$, the supremum in
  \eqref{eq:gamma-definition} is attained at a unique point of
  $(0,\infty)^2$. Denote this point by
  $(x_\lambda(w,z),y_\lambda(w,z))$. The optimizer map and
  $\Gamma_\lambda$ are $C^\infty$ on $(0,\infty)^2$.
  \item \emph{Symmetry.}
  $\Gamma_\lambda(w,z)=\Gamma_\lambda(z,w)$.
  \item \emph{Joint convexity.} The map
  $(w,z)\mapsto\Gamma_\lambda(w,z)$ is jointly convex on
  $(0,\infty)^2$.
  \item \emph{Coordinatewise decrease.} The function $\Gamma_\lambda$ is
  strictly decreasing in each coordinate.
  \item \emph{Strict submodularity.}
  \begin{equation}
  \label{eq:submodularity}
    \partial_{wz}^2\Gamma_\lambda(w,z)<0.
  \end{equation}
\end{enumerate}
\end{lemma}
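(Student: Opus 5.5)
The plan is to analyse the concave-type maximisation in \eqref{eq:gamma-definition} directly and to get all five properties from the first-order conditions and the envelope theorem. Write
\[
  \varphi(x,y;w,z):=\tfrac12\log x+\tfrac12\log y+\log S-\tfrac12\log w-\tfrac12\log z-\tfrac12\log(w+x)-\tfrac12\log(z+y)-\lambda(x+y),
  \qquad S:=w+z+x+y .
\]

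\emph{Symmetry.} This is immediate, since $\varphi$ is invariant under the swap $(w,x)\leftrightarrow(z,y)$.

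\emph{Attainment.} For fixed $w,z$, $\varphi\to-\infty$ when $x\to0$ (because of $\tfrac12\log x$) and when $x\to\infty$ (because $-\lambda x$ dominates the logarithms). The same holds in $y$, so a maximiser exists in $(0,\infty)^2$.

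\emph{Uniqueness and smoothness.} The critical-point equations are
\[
  \frac1{2x}-\frac1{2(w+x)}+\frac1S=\lambda,\qquad \frac1{2y}-\frac1{2(z+y)}+\frac1S=\lambda .
\]
I would prove that the Hessian of $\varphi$ in $(x,y)$ is negative definite at every critical point. The diagonal entries are $-\tfrac1{2x^2}+\tfrac1{2(w+x)^2}-\tfrac1{S^2}<0$ and the off-diagonal entry is $-1/S^2$, so the check reduces to a determinant inequality, which I would verify using the critical equations. A smooth function on $(0,\infty)^2$ that tends to $-\infty$ at the boundary and whose critical points are all nondegenerate local maxima has exactly one critical point, by a mountain-pass / Morse-count argument (the domain is contractible). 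If the determinant inequality is awkward, I would instead pass to the variables $u=\log x$, $v=\log y$ and look for concavity there. Once uniqueness and nondegeneracy are established, the implicit function theorem gives that $(x_\lambda,y_\lambda)$ is $C^\infty$, and hence so is $\Gamma_\lambda=\varphi(x_\lambda,y_\lambda;w,z)$.

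\emph{Coordinatewise decrease.} By the envelope theorem,
\[
  \partial_w\Gamma_\lambda=\frac1S-\frac1{2w}-\frac1{2(w+x)}
\]
evaluated at the optimiser. Since $S>w+x$,
\[
  \frac1S<\frac1{w+x}<\frac1{2w}+\frac1{2(w+x)},
\]
so $\partial_w\Gamma_\lambda<0$. The $z$-coordinate follows by symmetry.

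\emph{Convexity and submodularity.} Both are second-order statements. I would use the standard formula for the Hessian of a parametric maximum,
\[
  \nabla^2_{(w,z)}\Gamma_\lambda=\varphi_{\theta\theta}-\varphi_{\theta\xi}\,\varphi_{\xi\xi}^{-1}\,\varphi_{\xi\theta},
  \qquad \theta=(w,z),\ \xi=(x,y).
\]
All entries are explicit rational functions of $w,z,x,y,S$. For the mixed entry, $\varphi_{wz}=-1/S^2$, the mixed $\theta\xi$ derivatives are $\varphi_{wx}=-1/S^2+\tfrac1{2(w+x)^2}$, $\varphi_{wy}=-1/S^2$, and so on. I would compute the correction term and show that $\partial^2_{wz}\Gamma_\lambda<0$, using the critical equations to eliminate $\lambda$. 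For joint convexity I would show that the two diagonal entries are positive and that the determinant is positive.

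The main obstacle is this last determinant inequality, a polynomial inequality in four positive variables subject to the two critical-point relations. My first attempt would be to use homogeneity to normalise one variable, rescaling $\lambda$ accordingly, and then to parametrise by the ratios $a=x/w$ and $b=y/z$, in which the critical equations become tractable. Then I would clear denominators and try to exhibit the resulting expression as a sum of manifestly positive terms. If that fails, the fallback is to reduce the domain via symmetry and monotonicity and certify the inequality there.
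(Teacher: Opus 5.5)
\textbf{Gap: joint convexity is not proved.} Your symmetry, attainment and coordinatewise-decrease arguments match the paper; the envelope formula $\partial_w\Gamma_\lambda=\frac1S-\frac1{2w}-\frac1{2(w+x)}$ is exactly what it uses. For joint convexity, however, you reduce the claim to a determinant inequality for $\nabla^2_{(w,z)}\Gamma_\lambda$, call this the main obstacle, and offer only tentative strategies. The fallback ``reduce the domain via symmetry and monotonicity and certify the inequality there'' is not an argument on an unbounded four-parameter set.

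The missing observation is that for each \emph{fixed} $(x,y)$ the objective is already jointly convex in $(w,z)$. With $A=w+x$ and $B=z+y$, its $(w,z)$-Hessian quadratic form is
\[
  \frac{a^2}{2w^2}+\frac{a^2}{2A^2}+\frac{b^2}{2z^2}+\frac{b^2}{2B^2}-\frac{(a+b)^2}{(A+B)^2}.
\]
Since $w<A$, the first four terms dominate $\frac{a^2}{A^2}+\frac{b^2}{B^2}$, and this is at least $\frac{(a+b)^2}{(A+B)^2}$ by Cauchy--Schwarz. Hence $\Gamma_\lambda$ is convex as a pointwise supremum of convex functions. This is the paper's proof, and it needs no smoothness. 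In your own framework the same fact finishes at once: $\varphi_{\theta\theta}\succeq0$, and $-\varphi_{\theta\xi}\varphi_{\xi\xi}^{-1}\varphi_{\xi\theta}\succeq0$ because $\varphi_{\xi\xi}$ is negative definite. So no determinant inequality needs to be checked.

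\textbf{Submodularity is only announced.} It is not automatic, because the off-diagonal entry of the correction term has no sign a priori. It does close along your route, without the critical equations. Write $s=S^{-2}$, $p_1=\frac1{2x^2}-\frac1{2A^2}>0$, $p_2=\frac1{2y^2}-\frac1{2B^2}>0$, $e_1=\frac1{2A^2}$ and $e_2=\frac1{2B^2}$. Then $\varphi_{\xi\xi}=-\bigl(\operatorname{diag}(p_1,p_2)+s\mathbf 1\mathbf 1^\top\bigr)$ and $\varphi_{\theta\xi}=\operatorname{diag}(e_1,e_2)-s\mathbf 1\mathbf 1^\top$. Sherman--Morrison then gives
\[
  \partial^2_{wz}\Gamma_\lambda=-\frac{s\,(1+e_1/p_1)(1+e_2/p_2)}{1+s\,(1/p_1+1/p_2)}<0.
\]
The paper instead parametrizes the optimizer by $\tau=x(w+x)/w=y(z+y)/z$ and shows $\partial_z\tau<0$. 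It deduces $\partial_zS>0$ and $\partial_z(w+x)<0$, then differentiates the envelope formula for $\partial_w\Gamma_\lambda$ in $z$.

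\textbf{Uniqueness is simpler than you plan.} The structure $\varphi_{\xi\xi}=-(\text{positive diagonal})-s\mathbf 1\mathbf 1^\top$ shows that $\varphi$ is strictly concave in $(x,y)$ on all of $(0,\infty)^2$. Uniqueness of the maximiser is therefore immediate, and you need neither the critical equations nor the Morse/mountain-pass count there.
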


\paragraph{Controlling the summation of consecutive pairs.}

The remaining sum is bounded in four steps. 
First, the summation of consecutive pairs can be split into two matchings, and maximizing over matchings leads to a symmetric function.
Second, the symmetry, joint convexity, and coordinatewise decrease together allow majorization, replacing the unknown weights $\omega_i^\downarrow$ by the explicit comparison sequence in \eqref{eq:reference-weights}.  
Third, strict submodularity identifies the maximizing matching of these explicit comparison weights.  
Finally, the resulting finite sum is compared with a one-dimensional integral.  
The full argument is given in \cref{sec:path-estimate-proof}.

\begin{lemma}
\label{lem:path-estimate}
Fix $\alpha,\lambda>0$. There is a constant $C_{\alpha,\lambda}<\infty$ such that, for every integer $q\ge2$ and every positive sequence $\omega_1,\ldots,\omega_q$ satisfying the tail condition in \eqref{eq:sequence-constraints},
\begin{equation}
\label{eq:path-estimate}
  \sum_{i=1}^{q-1}\Gamma_\lambda(\omega_i,\omega_{i+1})
  \le
  2q\int_0^{1/2}
  \Gamma_\lambda\bigl(W_\alpha(t),W_\alpha(1-t)\bigr)\,dt
  +C_{\alpha,\lambda},
\end{equation}
where $W_\alpha(t):=\alpha t^{-1-\alpha}$ for $0<t\le1$.
\end{lemma}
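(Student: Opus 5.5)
We follow the four-step outline announced before the statement. \emph{Step 1 (matchings).} Split the consecutive pairs $\{(i,i+1)\}_{i=1}^{q-1}$ into the odd-indexed and even-indexed pairs. Each family is a partial matching on $\{1,\ldots,q\}$, so
\[
  \sum_{i=1}^{q-1}\Gamma_\lambda(\omega_i,\omega_{i+1})\le 2\max_{M}\sum_{\{a,b\}\in M}\Gamma_\lambda(\omega_a,\omega_b),
\]
where $M$ ranges over matchings of $\{1,\ldots,q\}$. Unmatched vertices cost nothing extra: we may pad with a harmless term, or note that $\Gamma_\lambda$ may be negative and simply allow partial matchings. By the symmetry in \cref{lem:gamma-geometry}, the quantity $\Phi(\omega):=\max_M\sum_{\{a,b\}\in M}\Gamma_\lambda(\omega_a,\omega_b)$ is a symmetric function of $(\omega_1,\ldots,\omega_q)$. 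So we may work with the decreasing rearrangement $\omega^\downarrow$. Each inner sum is jointly convex, and a maximum of convex functions is convex, so $\Phi$ is convex and permutation-invariant, hence Schur-convex. By coordinatewise decrease, $\Phi$ is also decreasing in each coordinate.

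\emph{Step 2 (majorization).} Define comparison weights
\[
  \bar\omega_s:=q\bigl[(q/(s-1))^\alpha-(q/s)^\alpha\bigr]\approx W_\alpha(s/q),
\]
with the first weight $\bar\omega_1$ chosen large (or treated separately). These make $\sum_{s>p}\bar\omega_s=q[(q/p)^\alpha-1]$ exactly. The tail condition in \eqref{eq:sequence-constraints} then says that $\omega^\downarrow$ weakly supermajorizes $\bar\omega$: each tail sum of $\omega^\downarrow$ dominates the corresponding tail sum of $\bar\omega$. For a convex, symmetric, coordinatewise decreasing function this yields $\Phi(\omega)\le\Phi(\bar\omega)$. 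This is the standard weak-supermajorization version of Karamata/Tomić–Weyl: decreasing Schur-convex functions are monotone under $\prec^w$ for bottom sums. Since $\Gamma_\lambda$ is only defined on $(0,\infty)^2$, we handle any infinite first weight by a limiting argument, or we simply drop index $1$ at cost $O(1)$.

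\emph{Step 3 (optimal matching).} Strict submodularity \eqref{eq:submodularity} gives an exchange argument. For $a>b$ and $c>d$, pairing the large with the small ($a$ with $d$, and $b$ with $c$) beats both alternatives. Iterating the exchange, the maximizing matching of the sorted values $\bar\omega_1\ge\cdots\ge\bar\omega_q$ is the antitone one, pairing $s$ with $q+1-s$ for $s\le q/2$. This gives
\[
  \Phi(\bar\omega)=\sum_{s\le q/2}\Gamma_\lambda(\bar\omega_s,\bar\omega_{q+1-s}).
\]
Note that matching every vertex is optimal only if the $\Gamma$ values are nonnegative. We check this, or we absorb a bounded number of leftover terms into the constant.

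\emph{Step 4 (Riemann sum).} Compare $\sum_{s\le q/2}\Gamma_\lambda(\bar\omega_s,\bar\omega_{q+1-s})$ with $q\int_0^{1/2}\Gamma_\lambda(W_\alpha(t),W_\alpha(1-t))\,dt$. By the mean value theorem, $\bar\omega_s$ lies between $W_\alpha(s/q)$ and $W_\alpha((s-1)/q)$. Since $\Gamma_\lambda$ is decreasing, each summand is bounded by $\Gamma_\lambda(W_\alpha(s/q),W_\alpha(1-(s-1)/q))$ up to index shifts. Monotonicity of the integrand along the path then gives a sum-versus-integral comparison with error $O(1)$.

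The main obstacle is the endpoint behaviour. We need $\Gamma_\lambda(w,z)$ to be integrable near $t=0$, where $w\to\infty$. We also need the total error from boundary indices to be $O(1)$ and not $O(\log q)$. Here we estimate $\Gamma_\lambda$ asymptotically as $w\to\infty$ with $z$ bounded. From \eqref{eq:kernel}, $K$ tends to $\sqrt{xy}\sqrt{w}/\sqrt{z(z+y)\,w}$ up to $1+O(x/w)$, so $\Gamma_\lambda(w,z)$ stays bounded as $w\to\infty$. The integrand is therefore bounded near the ends and the comparison is uniform. Combining the steps gives \eqref{eq:path-estimate}.
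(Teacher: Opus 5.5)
Your Steps~2 and~4 follow the paper's outline, but there is a genuine gap in Step~1, and it makes Step~3 false. You let $\Phi$ be a maximum over \emph{all} partial matchings, of any size, and then claim in Step~3 that $\Phi(\bar\omega)$ equals the antitone sum $\sum_{s\le q/2}\Gamma_\lambda(\bar\omega_s,\bar\omega_{q+1-s})$. With partial matchings allowed, the empty matching gives $\Phi\ge 0$, and an optimal matching simply discards every edge with $\Gamma_\lambda<0$.

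But $\Gamma_\lambda$ is negative on most of the relevant range. It tends to $-\infty$ when both arguments are large. Moreover, exactly when the lemma is used, $J(\alpha,\lambda)<0$, which forces $\int_0^{1/2}\Gamma_\lambda(W_\alpha(t),W_\alpha(1-t))\,dt<-\lambda<0$. So $\Theta(q)$ edges are discarded, not boundedly many. Neither remedy works: $\Gamma_\lambda\ge0$ is false, and you cannot absorb $\Theta(q)$ leftover terms into a constant. The bound $2\Phi+C\ge C$ can then exceed $2q\int_0^{1/2}\Gamma_\lambda\,dt+C$ by $\Theta(q)$, which is exactly the information the lemma must retain. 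Padding to a perfect matching is not harmless either. To upper-bound the original sum you must subtract the added term $\Gamma_\lambda(\omega_a,\omega_b)$, which is unbounded below. For odd $q$ a perfect matching does not exist at all.

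The fix, which is what the paper does, is to fix the number of edges. Use $\Phi_{k,\lambda}$, the maximum over matchings with exactly $k=\lfloor (q-1)/2\rfloor$ edges on $(\omega_q^\downarrow,\ldots,\omega_2^\downarrow)$. It is still symmetric, convex and coordinatewise decreasing, so your majorization step goes through. Coordinatewise decrease shows an optimal $k$-matching uses the $2k$ smallest coordinates, and your exchange argument then pairs them antitonically.

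To reduce the odd and even matchings to this class, remove the unconstrained largest weight $\omega_1^\downarrow$:
\begin{itemize}
\item If its index $i_*$ is matched, swap $i_*$ for an unused index; that coordinate has no larger value, so the sum does not decrease.
\item When $q$ is even, the odd matching has $k+1$ edges; delete the one containing $i_*$, at cost at most $\max\{0,\Gamma_\lambda(\alpha,\alpha)\}$.
\end{itemize}
This cost bound needs $\min_i\omega_i\ge\alpha$, which follows from the tail condition with $p=q-1$. You did not state it, and your ``drop index~1 at cost $O(1)$'' also needs it. Only that option works. Choosing $\bar\omega_1$ large or infinite fails: weak supermajorization of the full vectors would require $\sum_s\omega_s^\downarrow\ge\sum_s\bar\omega_s$, yet $\omega_1^\downarrow$ may equal $\omega_2^\downarrow$. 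A limiting argument also goes the wrong way, since coordinatewise decrease gives $\Phi$ at $\omega_1^\downarrow$ at least $\Phi$ at $\infty$.

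Step~4 also needs repair. The map $t\mapsto\Gamma_\lambda(W_\alpha(t),W_\alpha(1-t))$ is not monotone: its two arguments move in opposite directions, giving derivative terms of opposite sign. Boundedness of the integrand alone does not give an $O(1)$ error over about $q/2$ Riemann cells. You need a Lipschitz bound in $t$ that is uniform up to $t=0$. Use $-\partial_w\Gamma_\lambda=\frac{x_*}{2wA}+\frac{B}{A\Sigma}\le Cw^{-2}$ for $z$ in a compact range, together with $|W_\alpha'(t)|\,W_\alpha(t)^{-2}=\frac{1+\alpha}{\alpha}t^{\alpha}$. This gives $|\partial_t\Gamma_\lambda(W_\alpha(t),W_\alpha(v))|\le C$, and the $v$-derivative is bounded on $[1/2,1]$.
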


\paragraph{Completing the proof of
\texorpdfstring{\Cref{thm:fixed-horizon}.}{Theorem 1.1.}}
\label{sec:fixed-proof-overview-completion}
Define
\begin{equation}
\label{eq:J-definition}
  J(\alpha,\lambda)
  :=2\lambda+
  2\int_0^{1/2}
  \Gamma_\lambda\bigl(W_\alpha(t),W_\alpha(1-t)\bigr)\,dt.
\end{equation}
Then \cref{lem:envelope-reduction,lem:path-estimate} give
\begin{equation}
\label{eq:product-by-J}
  E_{\mathrm{end}}
  \prod_{i=1}^{q-1}K_{\omega_i,\omega_{i+1}}(x_i,x_{i+1})
  \le C_{\alpha,\lambda}e^{qJ(\alpha,\lambda)}.
\end{equation}
Thus $J(\alpha,\lambda)\le0$ makes \eqref{eq:sequence-bound} uniform in $q$, and \cref{lem:fixed-sequence-reduction} yields
\begin{equation}
\label{eq:fixed-horizon-rate}
  R_n(H)\ge c_{\alpha,\lambda}(n+1)^{-(1+\alpha)}.
\end{equation}
For $\alpha=0.6342$ and $\lambda=0.4506$, numerical integration gives $J(0.6342,0.4506)<-0.00005<0$.
\Cref{app:scalar-variational-problem} studies the dependence of $J$ on $(\alpha,\lambda)$ and explains how the pair $(0.6342,0.4506)$ was obtained numerically.
Therefore $R_n(H)=\Omega(n^{-1.6342})$, proving the nonnegative case of \cref{thm:fixed-horizon}. \Cref{app:negative-stepsizes} further extends the result to arbitrary stepsize schedules.

\section{Proof of the Non-Anytime Lower Bound}
\label{sec:nonanytime-proof}

This section proves \cref{lem:fixed-sequence-reduction,lem:envelope-reduction,lem:gamma-geometry,lem:path-estimate} and thereby completes the proof of \cref{thm:fixed-horizon} for nonnegative stepsize schedules.

\subsection{Reduction to a sequence inequality}
\label{sec:fixed-sequence-reduction-proof}

\begin{proof}[Proof of \cref{lem:fixed-sequence-reduction}]
Suppose that \eqref{eq:sequence-constraints} implies \eqref{eq:sequence-bound}, with a constant $C_\alpha$ uniform over all $q\ge 2$.  
Fix $n\ge 1$ and a nonnegative schedule $H=(h_1,\ldots,h_n)$.
Recall that $r$ is the number of long steps.
If $r=0$, the Huber bound \eqref{eq:Huber} yields 
\[
  R_n(H)\ge(2n+1)^{-1}\ge(2(n+1))^{-1}.
\]
Therefore, from now on we consider $r\geq 1$.

We decompose each stepsize $h_k$ into two components: a base $\min\{h_k, 1\}$ and an excess $\max\{h_k - 1, 0\}$.
Let \(a_1 \ge \cdots \ge a_r > 0\) denote the positive excesses in decreasing order.
We define the base accumulation as $B:=1+\sum_{k=1}^n\min\{h_k,1\}$.
By definition, $B\leq n+1$.
Moreover, let $D_s$ denote the total stepsize accumulation excluding the excesses from the $s$ longest steps:
\[
  D_s:=B+\sum_{\ell=s+1}^r a_\ell,
  \qquad 1\le s\le r,
\]
with $D_r=B$.
See \cref{fig:ranked-excesses} for an illustration of these definitions.

\begin{figure}[H]
  \centering
  \begin{minipage}[t]{.55\textwidth}
  \vspace{0pt}
  \centering
  \begin{tikzpicture}[font=\scriptsize]
    \begin{scope}[x=.37cm,y=.76cm]
      \path[use as bounding box] (-.70,-1.15) rectangle (21.60,2.30);
      \input{figures/stepsize-chronology-body}
    \end{scope}
  \end{tikzpicture}

  \vspace{.2em}
  $\displaystyle
    B=1+\textcolor{baseblueedge!85!black}{
      \underbrace{\sum_{k=1}^n\min\{h_k,1\}}
      _{\text{\scriptsize total height of the blue portions}}}$
\end{minipage}\hfill
\begin{minipage}[t]{.42\textwidth}
  \vspace{0pt}
  \centering
  \begin{tikzpicture}[font=\scriptsize,x=.67cm,y=.90cm]
    \path[use as bounding box] (.50,-.61) rectangle (8.50,2.30);
    \node at (4.5,2.08) {$a_1\ge\cdots\ge a_r>0$};

    \draw[softgray] (.75,0)--(8.25,0);
    \foreach \x/\h/\lab in {
      1.25/1.08/a_1,2.25/.78/a_2,3.50/.62/a_q}{
      \fill[selectedpurple!82] (\x-.20,0) rectangle (\x+.20,\h);
      \draw[selectedpurple!70!black,line width=.4pt]
        (\x-.20,0) rectangle (\x+.20,\h);
      \node[below=2pt] at (\x,0) {$\lab$};
    }
    \node at (2.88,.18) {$\cdots$};
    \foreach \x/\h/\lab in {
      4.75/.46/a_{q+1},5.75/.36/a_{q+2},7.75/.18/a_r}{
      \fill[tailgold!65] (\x-.20,0) rectangle (\x+.20,\h);
      \draw[tailgold!70!black,line width=.35pt]
        (\x-.20,0) rectangle (\x+.20,\h);
      \node[below=2pt] at (\x,0) {$\lab$};
    }
    \node at (6.72,.15) {$\cdots$};
    \draw[densely dashed,softgray] (4.00,-.18)--(4.00,1.22)
      node[above=2pt,black] {$q$};
  \end{tikzpicture}

  \vspace{.2em}
  $\displaystyle D_q=B+\sum_{\ell>q}a_\ell$
\end{minipage}
  \caption{The schedule in chronological order
  (left) and its positive excesses in decreasing order (right).}
  \label{fig:ranked-excesses}
\end{figure}

Choose
\begin{equation}\label{eq:choice-of-q}
    q\in\operatorname*{arg\,min}_{1\le s\le r}D_s s^\alpha.
\end{equation}
This specific choice ensures that $D_q$ has a universal upper bound
\begin{equation}\label{eq:Dq-universal-upper-bound}
    D_q\leq D_q q^\alpha \le D_r r^\alpha \leq (n+1)^{\alpha+1}
\end{equation}
as $D_r=B\leq n+1$ and $r+1\leq n+1$.

We first treat the special case $q=1$ to illustrate the power of \cref{lem:hard-product}.
Then we show that \cref{lem:hard-product} combined with \eqref{eq:sequence-bound} is sufficient to prove the general case $q\geq 2$.

\paragraph{Case 1: $q=1$.}
If $a_1\le D_1$, since $D_1+a_1=1+\sum_{k=1}^n h_k$, the Huber bound \eqref{eq:Huber} gives
\[
  R_n(H)\ge \frac1{2(D_1+a_1)-1}
  \ge \frac1{4D_1}.
\] 
If $a_1>D_1$, select $t$ such that $h_t-1=a_1$ and apply \cref{lem:hard-product}.  
In this case $D_1=(S_1+1)+(S_2+1)$, while
\[
  (S_1+1)+a_1+2(S_2+1)-1<a_1+2D_1<3a_1.
\]
Consequently,
\[
  R_n(H)
  \ge\frac{a_1}{(S_1+1)(S_1+a_1+2S_2+2)}
  >\frac1{3(S_1+1)}
  \ge\frac1{3D_1}
  >\frac1{4D_1}.
\]
Thus $R_n(H)\ge1/(4D_1)$ in both subcases.  Applying \eqref{eq:Dq-universal-upper-bound} yields
\[
  R_n(H)\ge\frac1{4(n+1)^{\alpha+1}}.
\]

\paragraph{Case 2: $q\ge 2$.}
Let $t_1<\cdots<t_q$ be the indices of the $q$ largest positive excesses $h_k-1$.  
With this selection, let $S_1,\ldots,S_{q+1}$ be defined as in \cref{lem:hard-product}.  
Note that
\[
  \sum_{i=1}^{q+1}(S_i+1)
  =q+1+\sum_{k\notin\{t_1,\ldots,t_q\}}h_k
  =D_q.
\]
We use $D_q/q$ as the normalization scale for $S_i+1$ and $h_{t_i}-1$, and define
\[
  x_i:=\frac{q(S_i+1)}{D_q},
  \qquad
  \omega_i:=\frac{q(h_{t_i}-1)}{D_q},
  \qquad 1\le i\le q.
\]
By \cref{lem:hard-product}, $R_n(H)\ge G$, where $G$ denotes the right-hand side of \eqref{eq:hard-product} for the indices selected above.
Substituting the definitions of $x_i$ and $\omega_i$ gives
\[
  G^{-1}
  =\frac{D_q}{q}\sqrt{x_1x_q}
  \sqrt{\frac{1+x_1/\omega_1}{1+x_q/\omega_q}}
  \left[
    1+\frac{x_q+2(q-\sum_{i=1}^q x_i)-q/D_q}{\omega_q}
  \right] 
  \prod_{i=1}^{q-1}
  K_{\omega_i,\omega_{i+1}}(x_i,x_{i+1}).
\]
Replacing $-q/D_q$ by $0$ increases the quantity in the bracket and yields 
\begin{equation}\label{eq:top-q-lower-bound}
    G^{-1}\le \frac{D_q}{q}E_{\mathrm{end}}\prod_{i=1}^{q-1}K_{\omega_i,\omega_{i+1}}(x_i,x_{i+1}).
\end{equation}
The sequences $(x_i)_{i=1}^q$ and $(\omega_i)_{i=1}^q$ satisfy \eqref{eq:sequence-constraints}. Specifically, as $x_i$ are normalized,
\[
  \sum_{i=1}^q x_i
  =\frac q{D_q}\sum_{i=1}^q(S_i+1)
  =q-\frac{q(S_{q+1}+1)}{D_q}
  <q.
\]
Moreover, noting that $\omega_s^\downarrow=qa_s/D_q$ for $1\le s\le q$, the minimizing property \eqref{eq:choice-of-q} implies
\[
  1+\frac1q\sum_{s=p+1}^q\omega_s^\downarrow
  =1+\frac{\sum_{s=p+1}^q a_s}{D_q}
  =\frac{D_p}{D_q}
  \ge \left(\frac qp\right)^\alpha
\]
for every $1\le p<q$.
Hence, \eqref{eq:sequence-bound} holds, and by \eqref{eq:Dq-universal-upper-bound}, 
\[
  R_n(H)\ge G\ge\frac{q}{C_\alpha D_q}\geq\frac{1}{C_\alpha (n+1)^{\alpha+1}}.
\]

Finally, combining all cases gives
\[
  R_n(H)\ge c_\alpha(n+1)^{-(1+\alpha)},
  \qquad
  c_\alpha:=\min\{C_\alpha^{-1},1/4\}.
\]
\end{proof}

\subsection{Eliminating the variables \texorpdfstring{$x_i$}{xi}}
\begin{proof}[Proof of \cref{lem:envelope-reduction}]
Set
\[
  \Delta:=q-\sum_{i=1}^q x_i>0.
\]
Applying \eqref{eq:gamma-envelope} to $K_{\omega_i,\omega_{i+1}}(x_i,x_{i+1})$ for $1\le i<q$ and summing gives
\[
  \log\prod_{i=1}^{q-1}
  K_{\omega_i,\omega_{i+1}}(x_i,x_{i+1})
  \le
  \lambda\sum_{i=1}^{q-1}(x_i+x_{i+1})
  +\sum_{i=1}^{q-1}\Gamma_\lambda(\omega_i,\omega_{i+1}).
\]
Since
\[
  \sum_{i=1}^{q-1}(x_i+x_{i+1})
  =2\sum_{i=1}^q x_i-x_1-x_q
  =2q-(2\Delta+x_1+x_q),
\]
we obtain
\[
\begin{aligned}
  E_{\mathrm{end}}
  \prod_{i=1}^{q-1}K_{\omega_i,\omega_{i+1}}(x_i,x_{i+1})
  &\le
  \exp\!\left\{
    2\lambda q+\sum_{i=1}^{q-1}
    \Gamma_\lambda(\omega_i,\omega_{i+1})
  \right\} E_{\mathrm{end}}
  e^{-\lambda(2\Delta+x_1+x_q)}.
\end{aligned}
\]
It remains to bound $E_{\mathrm{end}}e^{-\lambda(2\Delta+x_1+x_q)}$ independently of $q$, $(x_i)_{i=1}^q$, and $(\omega_i)_{i=1}^q$.

Taking $p=q-1$ in \eqref{eq:sequence-constraints} yields
\begin{equation}\label{eq:weight-lower-bound}
    \min_{1\le i\le q}\omega_i
  =\omega_q^\downarrow
  \ge q\left[\left(\frac q{q-1}\right)^\alpha-1\right]
  \ge\alpha q\log\frac q{q-1}
  \ge\alpha.
\end{equation}
The last two inequalities use $e^u-1\ge u$ and $\log(1+t)\ge t/(1+t)$.  In particular, $\omega_1,\omega_q\ge\alpha$.  
Using this in \eqref{eq:endpoint-factor} and writing $T:=x_1+x_q+\Delta$, we obtain
\[
  E_{\mathrm{end}}e^{-\lambda(2\Delta+x_1+x_q)}
  \le C_\alpha(1+T)^{5/2}e^{-\lambda T}
  \le C_\alpha\sup_{t\ge0}(1+t)^{5/2}e^{-\lambda t}
  =:C_{\alpha,\lambda}<\infty.
\]
Substitution into the preceding display proves \eqref{eq:envelope-reduction}.
\end{proof}

\subsection{Properties of \texorpdfstring{$\Gamma_\lambda$}{Gamma lambda}}
\begin{proof}[Proof of \cref{lem:gamma-geometry}]
For $w,z,x,y>0$, set
\[
  F_{w,z}(x,y):=\log K_{w,z}(x,y)-\lambda(x+y).
\]

\paragraph{(1) Smoothness.}
For fixed $w,z>0$, the function $F_{w,z}(x,y)$ tends to $-\infty$ as $x\downarrow0$, $y\downarrow0$, or $x+y\to\infty$.  
It therefore attains its maximum at an interior point.  
Write $A:=w+x$, $B:=z+y$, and $\Sigma:=A+B$.  
For every nonzero direction $(a,b)$,
\[
  D^2_{x,y}F_{w,z}(x,y)[(a,b),(a,b)]
  =-\frac{a^2}{2x^2}-\frac{b^2}{2y^2}
   -\frac{(a+b)^2}{\Sigma^2}
   +\frac{a^2}{2A^2}+\frac{b^2}{2B^2}<0.
\]
Thus the maximizer is unique.

Fix $(w_0,z_0)$ and let $(x_0,y_0)$ be its maximizer.  
The Jacobian in $(x,y)$ of the first-order condition $\nabla_{x,y}F_{w,z}(x,y)=0$ is the invertible Hessian above.  
The implicit function theorem gives a neighborhood of $(w_0,z_0)$ on which the critical point is a $C^\infty$ function of $(w,z)$.  
Since $(w_0,z_0)$ was arbitrary, the optimizer map is $C^\infty$ on $(0,\infty)^2$.  
Substituting it into $F_{w,z}$ shows that $\Gamma_\lambda$ is also $C^\infty$.

\paragraph{(2) Symmetry.}
The definition \eqref{eq:gamma-definition} is invariant under
$(w,x)\leftrightarrow(z,y)$.

\paragraph{(3) Joint convexity.}
For fixed $x,y$, put $A:=w+x$ and $B:=z+y$.  The Hessian in $(w,z)$ has quadratic form
\[
  D^2_{w,z}F_{w,z}(x,y)[(a,b),(a,b)]=\frac{a^2}{2w^2}+\frac{a^2}{2A^2}
  +\frac{b^2}{2z^2}+\frac{b^2}{2B^2}
  -\frac{(a+b)^2}{(A+B)^2}\ge0.
\]
Indeed, the first four terms dominate $a^2/A^2+b^2/B^2$, which is at least $(a+b)^2/(A+B)^2$ by Cauchy--Schwarz.  
Taking the supremum in \eqref{eq:gamma-definition} preserves convexity.

\paragraph{(4) Coordinatewise decrease.}
Write $x_*:=x_\lambda(w,z)$ and $y_*:=y_\lambda(w,z)$, and set $A:=w+x_*$, $B:=z+y_*$, and $\Sigma:=A+B$.  
The chain rule and the first-order conditions give
\begin{equation}\label{eq:partialwGammalambda}
    \begin{aligned}
  \partial_w\Gamma_\lambda
  &=\partial_wF_{w,z}
    +\partial_xF_{w,z}\,\partial_wx_*
    +\partial_yF_{w,z}\,\partial_wy_*\\
  &=\partial_wF_{w,z}
  =\frac1\Sigma-\frac1{2w}-\frac1{2A}.
\end{aligned}
\end{equation}
Equivalently,
\[
  -\partial_w\Gamma_\lambda
  =\frac{x_*}{2wA}+\frac{B}{A\Sigma}>0,
\]
and the same argument applies to the $z$ coordinate.

\paragraph{(5) Strict submodularity.}
With the notation above, the first-order conditions imply
\[
  \lambda-\frac1\Sigma
  =\frac{w}{2x_*A}
  =\frac{z}{2y_*B}.
\]
Consequently, define the parameter
\[
  \tau_*:=\frac{x_*A}{w}=\frac{y_*B}{z}>0.
\]
For $u,\tau>0$, set
\begin{equation}\label{eq:futau-def}
    f_u(\tau):=\frac{u+\sqrt{u^2+4u\tau}}2.
\end{equation}
Then $x_*=-w+f_w(\tau_*)$, $y_*=-z+f_z(\tau_*)$, $A=f_w(\tau_*)$, $B=f_z(\tau_*)$, and
\begin{equation}\label{eq:tau-eq}
    g(\tau_*;w,z)=0,
  \qquad
  g(\tau;w,z):=
  \frac1{2\tau}+\frac1{f_w(\tau)+f_z(\tau)}-\lambda.
\end{equation}
The explicit formula for $f_u$ shows that it increases in both arguments.
Hence $\partial_\tau g<0$ and $\partial_zg<0$, and implicit differentiation gives $\partial_z\tau_*=-(\partial_zg)/(\partial_\tau g)<0$.  
Moreover,
\[
  \Sigma=\left(\lambda-\frac1{2\tau_*}\right)^{-1},
  \qquad A=f_w(\tau_*).
\]
Since $d\Sigma/d\tau=-\Sigma^2/(2\tau^2)<0$, we have $\partial_z\Sigma>0$ and $\partial_zA=f_w'(\tau_*)\partial_z\tau_*<0$.  
Differentiating \eqref{eq:partialwGammalambda} yields
\[
  \partial_{wz}^2\Gamma_\lambda
  =-\frac{\partial_z\Sigma}{\Sigma^2}
    +\frac{\partial_zA}{2A^2}<0,
\]
which proves \eqref{eq:submodularity}.
\end{proof}

\subsection{Upper bounding $\sum_{i=1}^{q-1}\Gamma_\lambda(\omega_i,\omega_{i+1})$}
\label{sec:path-estimate-proof}
In this section, we prove \cref{lem:path-estimate}.
\paragraph{Step 1. Symmetrization.}

For a positive vector $v=(v_1,\ldots,v_N)\in\mathbb{R}^N$ and $0\le k\le\lfloor N/2\rfloor$, let $\mathcal M_k(N)$ be the set of all collections of $k$ disjoint unordered pairs from $\{1,\ldots,N\}$.  
Define the largest sum over all $k$-edge matchings on the coordinates of $v$ to be
\[
  \Phi_{k,\lambda}(v_1,\ldots,v_N)
  :=\max_{M\in\mathcal M_k(N)}
  \sum_{\{a,b\}\in M}\Gamma_\lambda(v_a,v_b),
  \qquad \Phi_{0,\lambda}:=0.
\]
Let $N=q-1$ and
\[
  k_q:=\left\lfloor\frac{q-1}{2}\right\rfloor.
\]
We are going to show that
\begin{equation}
\label{eq:parity-bound}
  \sum_{i=1}^{q-1}\Gamma_\lambda(\omega_i,\omega_{i+1})
  \le
  2\Phi_{k_q,\lambda}
  (\omega_q^\downarrow,\ldots,\omega_2^\downarrow)
  +C_{\alpha,\lambda}.
\end{equation}

We first split the consecutive pairs into an odd matching and an even matching, according to whether $i$ is odd or even.
An illustration is given in \cref{fig:odd-even-matchings}.
\begin{figure}[H]
  \centering
  \includegraphics[width=0.82\textwidth]{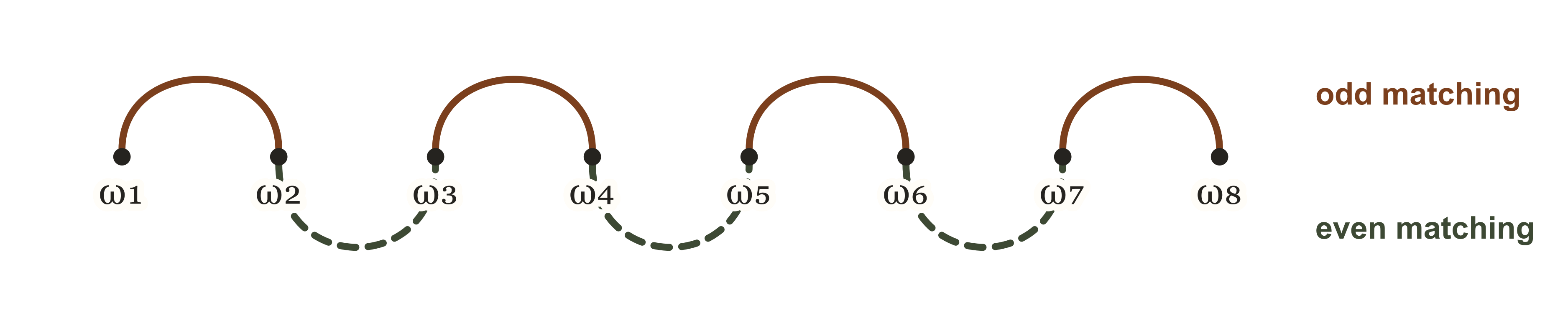}
  \caption{The consecutive pairs split into two matchings with $q=8$.}
  \label{fig:odd-even-matchings}
\end{figure}
Choose $i_*$ such that $\omega_{i_*}=\omega_1^\downarrow$.  
If $q$ is odd, both matchings have $k_q$ pairs and leave one index unused.
If $i_*$ is matched, replace it by the unused index.  
The unused coordinate has no larger value, so the coordinatewise decrease in \cref{lem:gamma-geometry} shows that the matching sum cannot decrease.
If $q$ is even, the same replacement applies to the even matching with $k_q$ pairs.  
The odd matching has $k_q+1$ pairs. Delete the pair containing $i_*$.  
The remaining matching has $k_q$ pairs drawn from $(\omega_q^\downarrow,\ldots,\omega_2^\downarrow)$, and is therefore bounded by $\Phi_{k_q,\lambda}(\omega_q^\downarrow,\ldots,\omega_2^\downarrow)$.
By \eqref{eq:weight-lower-bound}, every $\omega_i\ge\alpha$, and coordinatewise decrease gives $\Gamma_\lambda(\omega_i,\omega_j)\le\Gamma_\lambda(\alpha,\alpha)$.  
In particular, the contribution of the deleted pair is bounded by $\max\{0,\Gamma_\lambda(\alpha,\alpha)\}$.  
This proves \eqref{eq:parity-bound}.

\paragraph{Step 2. Majorization.}
For a fixed matching, the sum of its $\Gamma_\lambda$ terms is convex and coordinatewise decreasing.  
Taking the maximum over all matchings preserves both properties, and permuting the coordinates only permutes the matchings. 
Thus $\Phi_{k,\lambda}$ is symmetric, convex, and coordinatewise decreasing.
For a symmetric, convex, and coordinatewise decreasing function, we have the following majorization principle:
\begin{lemma}
\label{lem:majorization-comparison}
Let $F:(0,\infty)^N\to\mathbb R$ be symmetric, convex, and coordinatewise decreasing. 
Suppose $a_1\le\cdots\le a_N$ and $b_1\le\cdots\le b_N$ satisfy
\[
  \sum_{i=1}^{\ell}a_i\ge\sum_{i=1}^{\ell}b_i,
  \qquad 1\le\ell\le N.
\]
Then $F(a)\le F(b)$.
\end{lemma}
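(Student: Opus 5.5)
This is weak (sub)majorization in the increasing-order form: the partial sums of the smallest entries of $a$ dominate those of $b$. I would prove it by first modifying $b$ so that $a$ majorizes it in the usual sense, and then applying the classical Schur-convexity consequence of convexity plus symmetry.

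\textbf{Step 1 (raise $b$).} If $\sum_i a_i>\sum_i b_i$, increase the largest coordinate $b_N$ by $\delta:=\sum_i a_i-\sum_i b_i$ to get $b'$. Since $b_N$ is the largest entry, $b'$ is still increasingly ordered. Its first $N-1$ partial sums are unchanged, so
\[
  \sum_{i=1}^\ell a_i\ge\sum_{i=1}^\ell b'_i \quad (1\le\ell\le N),
\]
and the last partial sums are now equal. Because $F$ is coordinatewise decreasing and $b'\ge b$ coordinatewise, we have $F(b')\le F(b)$. It therefore suffices to prove $F(a)\le F(b')$.

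\textbf{Step 2 (majorization).} Now $a$ and $b'$ have equal total sums, and the smallest-$\ell$ partial sums of $a$ dominate those of $b'$. Subtracting from the common total, the largest-$\ell$ partial sums of $b'$ dominate those of $a$. Thus $b'$ majorizes $a$, i.e.\ $a\prec b'$.

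\textbf{Step 3 (Hardy--Littlewood--P\'olya).} By the Hardy--Littlewood--P\'olya / Birkhoff theorem, $a=Pb'$ for some doubly stochastic $P$. Then $P$ is a convex combination $\sum_\sigma\theta_\sigma P_\sigma$ of permutation matrices, so $a=\sum_\sigma\theta_\sigma\,\sigma(b')$. Convexity and symmetry of $F$ give
\[
  F(a)\le\sum_\sigma\theta_\sigma F(\sigma(b'))=F(b').
\]
Combining with Step 1 yields $F(a)\le F(b')\le F(b)$.

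To keep the argument self-contained without citing Birkhoff, Step 3 can be replaced by a finite sequence of Robin Hood transfers. Each transfer moves mass from a larger coordinate of $b'$ to a smaller one; it is a convex combination of the current vector and its transposition, so it does not increase $F$. After finitely many transfers one reaches $a$. All coordinates stay between $\min b'_i$ and $\max b'_i$, hence remain in $(0,\infty)$, which is the domain of $F$.

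The only delicate point is the positivity/domain issue, and the sign convention: "decreasing" means $F$ goes down when coordinates go up, which is exactly what Step 1 needs. Everything else is standard majorization theory.
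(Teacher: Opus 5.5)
Your proof is correct and follows the same route as the paper, which cites Marshall--Olkin--Arnold: Proposition 3.C.2 (symmetric and convex implies Schur-convex) and Theorem 3.A.8 (decreasing Schur-convex functions preserve weak supermajorization). Your Step 1 raises $b_N$ by $\delta=\sum_i a_i-\sum_i b_i\ge0$ to get $b'\ge b$ with $a\prec b'$, and your Birkhoff (or T-transform) argument gives the Schur-convexity; these are the standard proofs of the two cited facts, so you have simply made the citation self-contained.
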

Here, the partial-sum assumption is precisely \(a\prec^{\,w}b\), in the notation of weak supermajorization. Since every symmetric convex function is Schur-convex and \(F\) is coordinatewise decreasing, the conclusion \(F(a)\le F(b)\) follows from the standard characterization of functions preserving weak supermajorization; see \cite[Theorem 3.A.8 and Proposition 3.C.2]{marshall2011inequalities}.



For $2\le s\le q$, define the comparison weights
\begin{equation}
\label{eq:reference-weights}
  \bar w_s^{(q)}
  :=q^{1+\alpha}\bigl((s-1)^{-\alpha}-s^{-\alpha}\bigr),
\end{equation}
which may also be written as
\[
  \bar w_s^{(q)}
  =q^{1+\alpha}\int_{s-1}^{s}\alpha u^{-1-\alpha}\,du.
\]
Since the integrand is decreasing, $\bar w_q^{(q)}\le\cdots\le\bar w_2^{(q)}$. 
Moreover, for $1\le p<q$,
\[
  \sum_{s=p+1}^q\bar w_s^{(q)}
  =q^{1+\alpha}\int_p^q\alpha u^{-1-\alpha}\,du
  =q\left[\left(\frac qp\right)^\alpha-1\right].
\]
Setting $p=q-\ell$ in \eqref{eq:sequence-constraints} gives, for $1\le\ell\le q-1$,
\[
  \sum_{i=1}^{\ell}\omega_{q+1-i}^\downarrow
  \ge\sum_{i=1}^{\ell}\bar w_{q+1-i}^{(q)}.
\]
Applying \cref{lem:majorization-comparison} to $F=\Phi_{k_q,\lambda}$, $a=(\omega_q^\downarrow,\ldots,\omega_2^\downarrow)$, and $b=(\bar w_q^{(q)},\ldots,\bar w_2^{(q)})$ proves
\begin{equation}
  \Phi_{k_q,\lambda}
  (\omega_q^\downarrow,\ldots,\omega_2^\downarrow)
  \le
  \Phi_{k_q,\lambda}
  (\bar w_q^{(q)},\ldots,\bar w_2^{(q)}).
  \label{eq:majorization-comparison}
\end{equation}

\paragraph{Step 3. Identify the maximizing matching.}
Majorization has replaced the unknown weights by the explicit sequence $(\bar w_q^{(q)},\ldots,\bar w_2^{(q)})$. 
It remains to identify the matching that maximizes the resulting sum.

For $0<a\le b\le c\le d$, symmetry and strict submodularity give
\[
\begin{aligned}
  \Gamma_\lambda(a,d)+\Gamma_\lambda(b,c)
  &\ge\Gamma_\lambda(a,c)+\Gamma_\lambda(b,d),\\
  \Gamma_\lambda(a,d)+\Gamma_\lambda(b,c)
  &\ge\Gamma_\lambda(a,b)+\Gamma_\lambda(c,d).
\end{aligned}
\]
Let $0<v_1\le\cdots\le v_N$.  Since $\Gamma_\lambda$ decreases in each coordinate, a maximizing matching may be chosen to use the $2k$ smallest coordinates. 
If $v_1$ is not paired with $v_{2k}$, exchange the partners of their two pairs.
The two inequalities above show that the total cannot decrease. 
Fixing the pair $(v_1,v_{2k})$ and repeating on the remaining coordinates gives
\begin{equation}
\label{eq:extremal-pairing}
  \Phi_{k,\lambda}(v_1,\ldots,v_N)
  =\sum_{j=1}^k\Gamma_\lambda(v_j,v_{2k+1-j}).
\end{equation}
Applied to the comparison weights, this identity gives
\begin{equation}
  \Phi_{k_q,\lambda}(\bar w_q^{(q)},\ldots,\bar w_2^{(q)})
  =\sum_{j=1}^{k_q}
  \Gamma_\lambda
  (\bar w_{q+1-j}^{(q)},\bar w_{q-2k_q+j}^{(q)}).
  \label{eq:reference-pairing}
\end{equation}

\paragraph{Step 4. Comparison with the integral.}
Finally, we estimate $\sum_{j=1}^{k_q}\Gamma_\lambda(\bar w_{q+1-j}^{(q)},\bar w_{q-2k_q+j}^{(q)})$ by comparing it with its corresponding Riemann integral.
By the mean value theorem, for every $2\le s\le q$ there is $\theta_s\in(s-1,s)$ such that
\begin{equation}
\label{eq:mean-value-weights}
  \bar w_s^{(q)}=W_\alpha(\theta_s/q),
  \qquad
  W_\alpha(t):=\alpha t^{-1-\alpha},
  \quad 0<t\le1.
\end{equation}
To control the error introduced by replacing these mean-value points with the uniform mesh, define
\[
  G(t,v):=\Gamma_\lambda(W_\alpha(t),W_\alpha(v)),
  \qquad 0<t\le1,
  \quad 0< v\le1.
\]
The following regularity estimate shows that one argument of $G$ can be extended to $0$.
\begin{lemma}[Uniform Lipschitz bound]
\label{lem:uniform-lipschitz}
For fixed $\alpha,\lambda>0$, the function $G$ has a Lipschitz extension to $[0,1]\times[1/2,1]$.
\end{lemma}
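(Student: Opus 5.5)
To prove the lemma, I would show that the partial derivatives of $G$ are bounded on $(0,1]\times[1/2,1]$. A function with bounded gradient on this convex set is Lipschitz, so it extends uniquely and Lipschitz-continuously to the closure $[0,1]\times[1/2,1]$. By the chain rule,
\[
  \partial_tG(t,v)=W_\alpha'(t)\,\partial_w\Gamma_\lambda\bigl(W_\alpha(t),W_\alpha(v)\bigr),
  \qquad
  \partial_vG(t,v)=W_\alpha'(v)\,\partial_z\Gamma_\lambda\bigl(W_\alpha(t),W_\alpha(v)\bigr).
\]
For $v\in[1/2,1]$, the second argument $z=W_\alpha(v)$ lies in the compact interval $[\alpha,\alpha2^{1+\alpha}]$. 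Moreover, $|W_\alpha'(v)|$ is bounded there. The task therefore reduces to asymptotics of $\partial_w\Gamma_\lambda$ and $\partial_z\Gamma_\lambda$ as $w\to\infty$, uniformly for $z$ in a compact set. Since $|W_\alpha'(t)|=\alpha(1+\alpha)t^{-2-\alpha}\asymp w^{(2+\alpha)/(1+\alpha)}$, I need
\[
  |\partial_w\Gamma_\lambda(w,z)|=O\bigl(w^{-(2+\alpha)/(1+\alpha)}\bigr).
\]
It suffices to show $|\partial_w\Gamma_\lambda(w,z)|=O(w^{-2})$, because $(2+\alpha)/(1+\alpha)<2$. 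I also need $\partial_z\Gamma_\lambda$ to stay bounded as $w\to\infty$.

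For these estimates I would use the envelope formula \eqref{eq:partialwGammalambda}, namely $-\partial_w\Gamma_\lambda=\frac{x_*}{2wA}+\frac{B}{A\Sigma}$, together with the $\tau_*$-parametrization from the proof of \cref{lem:gamma-geometry}(5). The first step is to show that $\tau_*$ stays in a compact subset of $(1/(2\lambda),\infty)$ as $w\to\infty$, uniformly for $z$ in the compact set.

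From \eqref{eq:tau-eq}, $\frac1{2\tau}+\frac1{f_w(\tau)+f_z(\tau)}=\lambda$. As $w\to\infty$, $f_w(\tau)=w+\tau+O(\tau^2/w)\to\infty$. Hence the equation tends to $\frac1{2\tau}=\lambda$, so $\tau_*\to1/(2\lambda)$, and more precisely $\tau_*-\frac1{2\lambda}=O(1/w)$. Then:
\begin{itemize}
  \item $x_*=f_w(\tau_*)-w=\tau_*+O(1/w)$ stays bounded;
  \item $A=f_w(\tau_*)\sim w$;
  \item $B=f_z(\tau_*)$ stays bounded above and away from zero;
  \item $\Sigma=A+B\sim w$.
\end{itemize}
Substituting these into the envelope formula gives $|\partial_w\Gamma_\lambda|\le\frac{x_*}{2wA}+\frac{B}{A\Sigma}=O(w^{-2})$, which is the required bound.

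By symmetry, $-\partial_z\Gamma_\lambda=\frac{y_*}{2zB}+\frac{A}{B\Sigma}$. Here $y_*=f_z(\tau_*)-z$ is bounded, $z$ and $B$ are bounded away from $0$, and $A/\Sigma\le1$. So $\partial_z\Gamma_\lambda$ is bounded on the region.

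Finally, on any set $[\varepsilon,1]\times[1/2,1]$, the function $G$ is $C^\infty$ by \cref{lem:gamma-geometry}(1), so its derivatives are bounded there by compactness. Combining this with the large-$w$ asymptotics gives a uniform bound on $\nabla G$ over $(0,1]\times[1/2,1]$. This yields the Lipschitz extension; in particular the limit $G(0,v)=\lim_{t\downarrow0}G(t,v)$ exists.

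The main obstacle is making the asymptotics of $\tau_*$ uniform in $z$. I would handle it by bounding $g(\tau;w,z)$ from both sides using monotonicity of $f_u$ in $u$ and $\tau$. The bracket is
\[
  \frac{1}{2\tau}+\frac{1}{2w}\ \ge\ g(\tau;w,z)+\lambda\ \ge\ \frac{1}{2\tau}.
\]
Evaluating $g$ at $\tau=\frac1{2\lambda}$ and at $\tau=\frac1{2\lambda-c/w}$ and checking the signs pins down $\tau_*=\frac1{2\lambda}+O(1/w)$, with constants depending only on $\lambda$ and the compact range of $z$.
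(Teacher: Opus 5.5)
Your proposal is correct and follows the paper's route. You use the chain rule and the envelope formula \eqref{eq:partialwGammalambda}, obtain $|\partial_w\Gamma_\lambda|\le Cw^{-2}$ from $A,\Sigma\ge w$ with $x_*,B$ bounded, and bound $\partial_z\Gamma_\lambda$, which gives $|\partial_tG|\le Ct^\alpha$. The paper differs only in how it controls the optimizer: it uses $B\ge\sqrt{z\tau_*}$ to get $\lambda\le\frac1{2\tau_*}+\frac1{\sqrt{\alpha\tau_*}}$, so $\tau_*\le C_{\alpha,\lambda}$ on all of $(0,1]\times[1/2,1]$ in one step, which makes your large-$w$ asymptotics and separate compactness argument unnecessary. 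Also, your bracket should read $\frac1{2\tau}\le g+\lambda\le\frac1{2\tau}+\frac1w$, since $f_w+f_z\ge w$ but not $2w$; this only changes the constant $c$, which must then exceed $2$.
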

\begin{proof}[Proof of \cref{lem:uniform-lipschitz}]
Let $D:=(0,1]\times[1/2,1]$. For $(t,v)\in D$, write $w:=W_\alpha(t),z:=W_\alpha(v)$, and let $(x_*,y_*)$ be the unique optimizer in the definition of $\Gamma_\lambda(w,z)$. 
Set $A:=w+x_*,B:=z+y_*,\Sigma:=A+B$ as in the proof of \cref{lem:gamma-geometry}.
The first-order conditions give
\[
\tau_*:=\frac{x_*A}{w}=\frac{y_*B}{z},
\qquad
\lambda=\frac{1}{2\tau_*}+\frac{1}{\Sigma}.
\]
We first obtain bounds on the optimizer that are uniform over $D$. Since $v\in[1/2,1]$,
\[
\alpha\le z\le \alpha 2^{1+\alpha}.
\]
Moreover, as $z+y_*\ge\sqrt{z\tau_*}$,
\[
  \lambda
  \le\frac1{2\tau_*}+\frac1{\sqrt{\alpha\tau_*}}.
\]
The right-hand side tends to $0$ as $\tau_*\to\infty$, so $\tau_*\le C_{\alpha,\lambda}$.  
In particular, $x_*,y_*\le\tau_*\le C_{\alpha,\lambda}$.  
The bounds on $z$ and $y_*$ give $B:=z+y_*\le\alpha2^{1+\alpha}+C_{\alpha,\lambda}$, whereas $A:=w+x_*\ge w$ and $\Sigma:=A+B\ge w$.

We next bound the derivatives of $\Gamma_\lambda$. The derivative identity \eqref{eq:partialwGammalambda} yields
\[
-\partial_w\Gamma_\lambda(w,z)
=\frac{x_*}{2wA}+\frac{B}{A\Sigma}.
\]
Since $A\ge w$, $\Sigma\ge A\ge w$, and $x_*,B\le C_{\alpha,\lambda}$, $\left|\partial_w\Gamma_\lambda(w,z)\right|\le C_{\alpha,\lambda}w^{-2}$.
Similarly,
\[
-\partial_z\Gamma_\lambda(w,z)
=\frac{y_*}{2zB}+\frac{A}{B\Sigma}.
\]
Using $z,B\ge\alpha$, $y_*\le C_{\alpha,\lambda}$, and $A\le\Sigma$, we obtain $\left|\partial_z\Gamma_\lambda(w,z)\right|\le \frac{C_{\alpha,\lambda}}{2\alpha^2}+\frac1\alpha\le C_{\alpha,\lambda}$.

By the chain rule,
\[
\partial_tG(t,v)
=\partial_w\Gamma_\lambda(w,z)\,W_\alpha'(t).
\]
Since $|W_\alpha'(t)|=\alpha(1+\alpha)t^{-2-\alpha}$ and $W_\alpha(t)^{-2}=\alpha^{-2}t^{2+2\alpha}$, we have
\[
|\partial_tG(t,v)|
\le C_{\alpha,\lambda}
|W_\alpha'(t)|W_\alpha(t)^{-2}
\le C_{\alpha,\lambda}t^\alpha
\le C_{\alpha,\lambda}.
\]
Likewise,
\[
\partial_vG(t,v)
=\partial_z\Gamma_\lambda(w,z)\,W_\alpha'(v).
\]
Because $v\in[1/2,1]$, the derivative $W_\alpha'(v)$ is uniformly bounded, and hence
\[
|\partial_vG(t,v)|\le C_{\alpha,\lambda}.
\]
It follows that $G$ is uniformly Lipschitz on $D$. 

In particular, for every $v\in[1/2,1]$, the limit
\[
\overline G(0,v):=\lim_{t\downarrow0}G(t,v)
\]
exists. Defining $\overline G=G$ on $D$ and using these limits on $\{0\}\times[1/2,1]$, the preceding Lipschitz estimate passes to the limit. 
Hence $\overline G$ is a Lipschitz extension of $G$ to $[0,1]\times[1/2,1]$.
\end{proof}

Using this extension, define
\[
  I(t):=G(t,1-t),
  \qquad 0\le t\le\frac12.
\]
After increasing the Lipschitz constant if needed, there are finite $L_{\alpha,\lambda}$ and $M_{\alpha,\lambda}$ such that
\[
\begin{aligned}
  |G(t,v)-G(t',v')|
  &\le L_{\alpha,\lambda}(|t-t'|+|v-v'|),\\
  |I(t)-I(t')|
  &\le L_{\alpha,\lambda}|t-t'|,
  \qquad |I(t)|\le M_{\alpha,\lambda}.
\end{aligned}
\]
Assume first that $q\ge3$, and write
\[
  r_q:=q-2k_q\in\{1,2\},
  \qquad
  u_j:=\frac{r_q+j-1}{q},
  \qquad 1\le j\le k_q.
\]
By symmetry and \eqref{eq:reference-pairing},
\[
  \Phi_{k_q,\lambda}(\bar w_q^{(q)},\ldots,\bar w_2^{(q)})
  =\sum_{j=1}^{k_q}
  G\left(\frac{\theta_{r_q+j}}q,
          \frac{\theta_{q+1-j}}q\right).
\]
Moreover,
\[
  \frac{\theta_{r_q+j}}q
  \in\left(u_j,u_j+\frac1q\right),
  \qquad
  \frac{\theta_{q+1-j}}q
  \in\left(1-u_j+\frac{r_q-1}{q},
            1-u_j+\frac{r_q}{q}\right).
\]
The total displacement from $(u_j,1-u_j)$ is less than $(1+r_q)/q\le3/q$.  Since $k_q\le q/2$,
\[
  \left|
    \Phi_{k_q,\lambda}(\bar w_q^{(q)},\ldots,\bar w_2^{(q)})
    -\sum_{j=1}^{k_q}I(u_j)
  \right|
  \le\frac32L_{\alpha,\lambda}.
\]
The intervals $[u_j-1/q,u_j]$ are consecutive, and at most $1/q$ of $[0,1/2]$ is omitted:  
For odd $q$, they fill $[0,1/2-1/(2q)]$; for even $q$, they fill $[1/q,1/2]$. 
On each covered interval, replacing $I(t)$ by its value at the right endpoint costs at most $L_{\alpha,\lambda}/(2q)$.  
Hence
\[
  \left|
    \sum_{j=1}^{k_q}I(u_j)
    -q\int_0^{1/2}I(t)\,dt
  \right|
  \le\frac14L_{\alpha,\lambda}+M_{\alpha,\lambda}.
\]
Combining the two comparisons gives
\[
\begin{aligned}
  \Phi_{k_q,\lambda}(\bar w_q^{(q)},\ldots,\bar w_2^{(q)})
  &=\sum_{j=1}^{k_q}I(u_j)+O_{\alpha,\lambda}(1)\\
  &=q\int_0^{1/2}
    \Gamma_\lambda(W_\alpha(t),W_\alpha(1-t))\,dt
    +O_{\alpha,\lambda}(1).
\end{aligned}
\]
When $q=2$, $k_q=0$, and the same estimate holds after enlarging the constant. 
Combining this estimate with \eqref{eq:parity-bound} and \eqref{eq:majorization-comparison} proves \eqref{eq:path-estimate}.

\section{Anytime Lower Bound}\label{sec:anytime}

In this section, we further extend the improved lower bound to the anytime case, for which an infinite schedule $h=(h_k)_{k\geq 1}$ is fixed in advance, and its prefix $H_n=(h_1,\ldots,h_n)$ is used at horizon $n$.  Compared with the non-anytime case, the schedules used at different horizons must be consistent. This leads to a sharper $\Omega(n^{-1.2408})$ lower bound. As a high-level conclusion of the proof, we combine the term-by-term estimate from the preceding sections with the anytime transfer argument of \cite{TsaiFatkhullinZhangHe2026}. As before, we first prove the result for nonnegative infinite schedules.

\Cref{lem:anytime-sequence-reduction} reduces the anytime problem to a truncated sequence inequality, similar to \cref{lem:fixed-sequence-reduction}.

\begin{lemma}
\label{lem:anytime-sequence-reduction}
Fix $\alpha>0$.  Suppose that there are constants
$\eta_\alpha\in(0,1)$ and $C_\alpha<\infty$, depending only on $\alpha$,
with the following property:  For every integer $q\geq 2$, every integer
$1\leq \ell\leq \eta_\alpha q$, and all positive sequences
$(x_i)_{i=1}^q$ and $(\omega_i)_{i=1}^q$ such that
\begin{equation}
  \sum_{i=1}^q x_i<q,
  \qquad
  \sum_{s=k+1}^q\omega_s^\downarrow
  \geq q\left[\left(\frac qk\right)^\alpha-1\right]
  \quad (\forall \ell\leq k<q),
  \label{eq:anytime-clean-condition}
\end{equation}
we have
\begin{equation}
  E_{\mathrm{end}}
  \prod_{i=1}^{q-1}
  K_{\omega_i,\omega_{i+1}}(x_i,x_{i+1})
  \leq C_\alpha,
  \label{eq:anytime-clean-target}
\end{equation}
where $E_{\mathrm{end}}$ and $K_{\omega_i,\omega_{i+1}}(x_i,x_{i+1})$ are defined as in \cref{lem:fixed-sequence-reduction}.

Then every nonnegative infinite schedule satisfies $\limsup_{n\to\infty}n^{\frac{2(1+\alpha)}{2+\alpha}}R_n(H_n)>0$.
\end{lemma}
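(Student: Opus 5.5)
The plan is to argue by contradiction and reuse the proof of \cref{lem:fixed-sequence-reduction}, with one change: the tail conditions for $k<\ell$ are replaced by an a priori cap on the few largest excesses, which the anytime structure supplies. Suppose some nonnegative infinite schedule satisfies $R_n(H_n)\le\varepsilon\, n^{-\beta}$ for all large $n$, where $\beta:=2(1+\alpha)/(2+\alpha)$ and $\varepsilon>0$ is small, to be fixed later. The goal is to exhibit a horizon $n$ at which the construction of \cref{lem:fixed-sequence-reduction}, run with the truncated constraints \eqref{eq:anytime-clean-condition}, produces $R_n(H_n)\ge c\,n^{-\beta}$ for a constant $c$ independent of $\varepsilon$. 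This contradicts the assumption once $\varepsilon<c$.

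\emph{Step 1: a cap on individual long steps.} Apply \cref{lem:hard-product} at horizon $t$ with the single selection $t_1=t$, so that $S_2=0$. This gives
\[
R_t(H_t)\ge \frac{h_t-1}{(S_1+1)(S_1+h_t+1)}, \qquad S_1=h_{1:t-1}.
\]
If $h_t-1>S_1+1$, the right-hand side is at least $1/(3(S_1+1))$. Otherwise it is at least $(h_t-1)/(3(S_1+1)^2)$. Combining either case with $R_t\le\varepsilon t^{-\beta}$ bounds every excess $h_t-1$ by a power of the accumulated stepsize $h_{1:t-1}+1$ times $t^{-\beta}$. Together with the Huber bound \eqref{eq:Huber}, which forces $h_{1:n}\gtrsim\varepsilon^{-1}n^{\beta}$, this should give a growth envelope of the form $a_1\le\cdots$, i.e.\ a bound on each of the top excesses at horizon $n$ in terms of $n$ and $B$.

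\emph{Step 2: choosing the horizon and the cutoff.} At horizon $n$, define $B$, $a_1\ge\dots\ge a_r$ and $D_s$ exactly as in the proof of \cref{lem:fixed-sequence-reduction}. Now choose $q\in\arg\min_{s}D_s s^\alpha$, with $s$ ranging only over $s\ge \ell$, where $\ell\approx\eta_\alpha q$. The normalization $x_i=q(S_i+1)/D_q$, $\omega_i=q(h_{t_i}-1)/D_q$ then verifies \eqref{eq:anytime-clean-condition} for $k\ge\ell$ by the same minimizing argument as before. Hypothesis \eqref{eq:anytime-clean-target} then yields $R_n\ge q/(C_\alpha D_q)$. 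It remains to bound $D_q/q$ by $n^{\beta}$. Since the minimization is restricted to $s\ge\ell$, we can no longer compare with $D_r r^\alpha$ freely. Instead we use $D_q q^\alpha\le D_s s^\alpha$ for a well-chosen $s$ and control $D_s$ by the caps from Step 1: the $\ell$ excluded top excesses contribute at most $\ell\cdot(\text{cap})$. Balancing $B\le n+1$, the cap (of order $n^{2-\beta}$ up to $\varepsilon$-factors), and the factor $q^\alpha$ is exactly where $\beta=2(1+\alpha)/(2+\alpha)$ should emerge. That exponent solves $\beta=(1+\alpha)(2-\beta)/\!\,\cdot$-type balances between the Huber regime and the single-long-step regime. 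The degenerate cases $q<2$, or $r$ too small for $\ell\le\eta_\alpha q$, are handled by the $q=1$ argument of \cref{lem:fixed-sequence-reduction} and by Huber.

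\emph{Main obstacle.} The delicate point is Step 2's bookkeeping. We must pick the horizon $n$ (possibly along a subsequence, e.g.\ just before or at a very large step, as in \cite[Theorem~4.1]{TsaiFatkhullinZhangHe2026}) so that the caps from earlier horizons genuinely bound the top $\ell$ excesses at horizon $n$. We must also show that the restricted minimizer $q$ satisfies both $\ell\le\eta_\alpha q$ and $D_q/q\lesssim n^{\beta}$. I would first try taking $n$ to be the time of the largest step seen so far, i.e.\ a record time, so that Step 1 applied at $t=n$ controls the current largest excess directly in terms of $h_{1:n-1}$. I would then check that the exponent balance closes precisely at $\beta$.
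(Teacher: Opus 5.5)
Choosing a record time $n$ with $h_n=\max_{k\le n}h_k$ matches the paper. The proposal nevertheless has a genuine gap: Step~1 uses the wrong one-step inequality, and the cap it produces is vacuous.

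\textbf{Step~1.} In your second case $h_t-1\le S_1+1$, so $h_{1:t}\le 2S_1+2$, and the Huber bound \eqref{eq:Huber} already gives $R_t(H_t)\ge 1/(5(S_1+1))$. Hence your cap $3R_t(H_t)(S_1+1)^2$ is at least $\tfrac35(S_1+1)$. It says nothing beyond $h_t-1\lesssim 1+h_{1:t-1}$, and your first case caps nothing at all. In particular the cap is not of order $n^{2-\beta}$: the lower bound $h_{1:n}\gtrsim\varepsilon^{-1}n^{\beta}$ you invoke makes it at least of order $\varepsilon^{-1}n^{\beta}$. With only this, the top $m$ excesses can only be bounded by $m(1+h_{1:n})$, so no head of long steps can be absorbed. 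The argument can then only reproduce the non-anytime exponent $1+\alpha$.

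The missing ingredient is the asymmetric-Huber terminal-step bound of \cite[Lemma~3.1]{TsaiFatkhullinZhangHe2026}, namely $R_n(H_n)\ge\bigl((h_n-1)/(1+h_{1:n-1})\bigr)^2$, i.e.\ \eqref{eq:app-terminal-step-bound}. At a record time it gives $a_j\le a_1\le\sqrt{R_n(H_n)}\,(1+h_{1:n-1})$ for every $j$. So the top $m_\star=\lfloor 1/(4\sqrt{R_n(H_n)})\rfloor$ excesses sum to at most $\tfrac14(1+h_{1:n})$. This square root is what eventually produces the exponent $\beta=2(1+\alpha)/(2+\alpha)$.

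\textbf{Step~2.} This step does not go through as written either. Choosing $q$ to minimize $D_ss^\alpha$ over $s\ge\ell$ while setting $\ell\approx\eta_\alpha q$ is circular. Moreover, a cap on the top excesses does not bound $D_s$, which by definition excludes the top $s$ excesses; what must be controlled is the tail $\sum_{j>s}a_j$.

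The paper does not try to read off $R_n\gtrsim n^{-\beta}$ directly from $R_n\ge q/(C_\alpha D_q)$. Instead it uses the truncated hypothesis in a self-consistency argument, for every $m$ with $R_n(H)D_m\le c_0$:
\begin{itemize}
\item It first proves $ma_m\le CD_m$. Otherwise all normalized weights of the top-$s$ selection exceed a fixed constant, the $\Gamma_1$ envelope makes the product $O(e^{-\delta s})$, and \eqref{eq:app-anytime-hard-bound} forces $R_n(H)>R_n(H)$.
\item It then proves $D_m\le C(n+1)^{1+\alpha}m^{-\alpha}$, by minimizing $D_ss^\alpha$ over $s\ge m$. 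By the first bound, either $D_mm^\alpha\le AD_rr^\alpha$, or the minimizer $\widehat q$ satisfies $m<\eta_\alpha\widehat q$. In the latter case the hypothesis with $\ell=m$ gives $R_n\ge\widehat q/(C_\alpha D_{\widehat q})>R_n$, a contradiction.
\end{itemize}
After checking $R_nD_{m_\star}\le c_0$, this yields $\sum_{j>m_\star}a_j\lesssim n^{1+\alpha}R_n^{\alpha/2}$. The final contradiction comes from Huber, not from the hypothesis directly: $1\le R_n(1+2h_{1:n})\lesssim nR_n+\bigl(n^{\beta}R_n\bigr)^{(2+\alpha)/2}=o(1)$.
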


When $\ell=1$, \eqref{eq:anytime-clean-condition} coincides with
\eqref{eq:sequence-constraints}.
For $\ell>1$, the constraints indexed by $1\leq k<\ell$ are removed,
so the $\ell$ largest $\omega_i$ are unconstrained. Thus \eqref{eq:anytime-clean-condition} admits a larger class of sequences $(x_i)_{i=1}^q$ and $(\omega_i)_{i=1}^q$ than \eqref{eq:sequence-constraints}.

To proceed, we adapt the estimation technique in the proof of \cref{lem:path-estimate} to the truncated tail condition \eqref{eq:anytime-clean-condition}. The resulting product bound has exponent $qJ(\alpha,\lambda)+O_{\alpha,\lambda}(\ell+1)$, rather than $qJ(\alpha,\lambda)$ as in \eqref{eq:product-by-J}.

\begin{lemma}\label{lem:truncated-product}
Fix $\alpha,\lambda>0$.  There are constants
$A_{\alpha,\lambda}>0$ and $B_{\alpha,\lambda}\geq0$, depending only on
$\alpha$ and $\lambda$, such that, for every $q\geq2$, every
$1\leq\ell<q$, and all positive sequences $(x_i)_{i=1}^q$ and
$(\omega_i)_{i=1}^q$ satisfying \eqref{eq:anytime-clean-condition},
\begin{equation}
  E_{\mathrm{end}}
  \prod_{i=1}^{q-1}
  K_{\omega_i,\omega_{i+1}}(x_i,x_{i+1})
  \leq
  A_{\alpha,\lambda}
  \exp\!\left\{
    qJ(\alpha,\lambda)+B_{\alpha,\lambda}(\ell+1)
  \right\}.
  \label{eq:truncated-bound}
\end{equation}
\end{lemma}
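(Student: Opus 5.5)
The proof will follow the non-anytime argument step by step. Only two places need new work: the endpoint factor, and the top $\ell$ weights, which are now unconstrained.

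\textbf{Step 1: envelope reduction.} First, apply \eqref{eq:gamma-envelope} to each adjacent factor exactly as in the proof of \cref{lem:envelope-reduction}. This gives the factor $\exp\{2\lambda q+\sum_{i=1}^{q-1}\Gamma_\lambda(\omega_i,\omega_{i+1})\}$ times $E_{\mathrm{end}}e^{-\lambda(2\Delta+x_1+x_q)}$. That proof used only the constraint with $p=q-1$, which is still available here because $\ell\le q-1$. So \eqref{eq:weight-lower-bound} still gives $\min_i\omega_i\ge\alpha$, and the endpoint term is bounded by a constant $C_{\alpha,\lambda}$ independent of $\ell$. The same lower bound $\omega_i\ge\alpha$ yields $\Gamma_\lambda(\omega_i,\omega_j)\le\Gamma_\lambda(\alpha,\alpha)=:\gamma_0$ for every pair $i,j$.

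\textbf{Step 2: removing the unconstrained weights.} Let $T$ be the set of indices of the $\ell$ largest weights $\omega^\downarrow_1,\dots,\omega^\downarrow_\ell$. There are at most $2\ell$ consecutive pairs $(i,i+1)$ that touch $T$, and each contributes at most $\max\{0,\gamma_0\}$. Deleting them costs $O(\ell)$.

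Every remaining consecutive pair uses only weights drawn from $(\omega^\downarrow_q,\dots,\omega^\downarrow_{\ell+1})$. Split these pairs into odd and even matchings as in Step 1 of \cref{sec:path-estimate-proof}. The path segments between deleted indices remain paths, so each matching is a matching on these $q-\ell$ coordinates. Each has at most $\lfloor (q-\ell)/2\rfloor$ edges. Because $\Gamma_\lambda$ is bounded above by $\gamma_0$, changing the number of edges by $O(\ell)$ costs only $O(\ell)$. This gives
\[
\sum_{i=1}^{q-1}\Gamma_\lambda(\omega_i,\omega_{i+1})\le 2\Phi_{k,\lambda}(\omega^\downarrow_q,\dots,\omega^\downarrow_{\ell+1})+O_{\alpha,\lambda}(\ell+1).
\]
Here $k=\lfloor (q-\ell)/2\rfloor$. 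Note that for $\ell=1$ this is the same set of coordinates $(\omega^\downarrow_q,\dots,\omega^\downarrow_2)$ as in the non-anytime case.

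\textbf{Step 3: majorization and the extremal pairing.} The truncated tail constraints for $\ell\le k'<q$ are exactly the partial-sum hypotheses of \cref{lem:majorization-comparison} for the vectors $(\omega^\downarrow_q,\dots,\omega^\downarrow_{\ell+1})$ and $(\bar w^{(q)}_q,\dots,\bar w^{(q)}_{\ell+1})$, with lengths $1,\dots,q-\ell$. So we may replace the weights by the comparison weights. Then \eqref{eq:extremal-pairing} identifies the maximizing matching: it pairs the smallest remaining weight with the largest, and so on inward.

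\textbf{Step 4: Riemann comparison (main obstacle).} This step needs care because the pairing is now shifted. The index $s=q+1-j$ is paired with an index near $\ell+2k+j$, i.e.\ near $q+\ell-\dots$, rather than with $1-u_j$. In normalized coordinates the pair is approximately $(t,\,1-t+O(\ell/q))$ instead of $(t,1-t)$. I must show that the Riemann sum still equals $q\int_0^{1/2}I\,dt+O(\ell+1)$.

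I expect this to be the main obstacle. The plan is to use the Lipschitz extension of \cref{lem:uniform-lipschitz}. Moving the second argument by $O(\ell/q)$ costs $O(\ell/q)$ per term, hence $O(\ell)$ over all $O(q)$ terms. The second arguments stay in $[1/2,1]$ up to boundary terms, and those are handled by deleting $O(\ell+1)$ terms, each bounded by $\gamma_0$. Missing or extra mesh points likewise cost $O(\ell+1)$ because $|I|\le M_{\alpha,\lambda}$.

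Collecting the errors gives exponent $2\lambda q+2q\int_0^{1/2}I+O(\ell+1)=qJ(\alpha,\lambda)+B_{\alpha,\lambda}(\ell+1)$. The constant $C_{\alpha,\lambda}$ from Step 1 becomes $A_{\alpha,\lambda}$.
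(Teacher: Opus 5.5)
Your overall structure matches the paper's: envelope reduction using only $\min_i\omega_i\ge\alpha$, deleting the at most $2\ell$ pairs that touch the $\ell$ unconstrained positions, odd/even matchings on the remaining $q-\ell$ coordinates, majorization, extremal pairing, and a shifted Riemann sum. There is, however, a genuine gap in Step~2, where you pass from each matching $M_\nu$ to $\Phi_{k,\lambda}$ with $k=\lfloor(q-\ell)/2\rfloor$. Since $|M_\nu|=k_\nu\le k$, this step \emph{adds} $k-k_\nu$ edges. That costs $O(\ell)$ only if $\Gamma_\lambda$ is bounded \emph{below}. The bound $\Gamma_\lambda\le\Gamma_\lambda(\alpha,\alpha)$ controls only the removal of edges: $\Phi_{k+m,\lambda}\le\Phi_{k,\lambda}+m\max\{0,\Gamma_\lambda(\alpha,\alpha)\}$. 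No lower bound is available. The truncated tail conditions bound tail sums only from below, so the weights outside $T$ can be arbitrarily large, while $\Gamma_\lambda(M,M)=-\log(\lambda M)-1\to-\infty$.

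Concretely, take $q=10$, $\ell=2$, $\omega_1=\omega_{10}=M+1$, and all other $\omega_i=M$ with $M$ large. The odd matching keeps only $(3,4),(5,6),(7,8)$, so $k_0=3<4=k$. Then $\sum_{M_0}\Gamma_\lambda-\Phi_{4,\lambda}\approx\log(\lambda M)+1$, which is unbounded. Your displayed inequality for the full path sum survives this particular example, but only because the discarded pairs are also very negative there. Your argument had already replaced those pairs by $\max\{0,\Gamma_\lambda(\alpha,\alpha)\}$, so it does not establish the inequality.

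The repair is never to increase the edge count. The paper keeps each $k_\nu$. Majorization and \eqref{eq:extremal-pairing} bound $\sum_{M_\nu}\Gamma_\lambda$ by $\sum_{j=1}^{k_\nu}\Gamma_\lambda(\bar w^{(q)}_{q+1-j},\bar w^{(q)}_{d_\nu+j})$, where $d_\nu=q-2k_\nu$ and $\ell\le d_\nu\le 2\ell+2$. A shifted Riemann-sum estimate then finishes the proof:
$\sum_{j=1}^{k}\Gamma_\lambda(\bar w^{(q)}_{q+1-j},\bar w^{(q)}_{d+j})\le q\int_0^{1/2}\Gamma_\lambda(W_\alpha(t),W_\alpha(1-t))\,dt+C_{\alpha,\lambda}(d+1)$, valid for every $d=q-2k\ge1$.

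Your Step~4 is essentially a sketch of that estimate, so it already handles a shift $d=O(\ell+1)$. One correction there: the partner of $s=q+1-j$ is $s=d+j$, not an index near $\ell+2k+j$. The normalized pair is therefore $(t,\,1-t+\tfrac{d+1}{q})$ with $t=(d+j)/q$. Alternatively, you could trim both matchings to the common size $\min\{k_0,k_1\}$, which differs from each $k_\nu$ by $O(\ell+1)$. Removing edges is the direction your upper bound does control, and Step~4 then runs with shift $q-2\min\{k_0,k_1\}\le 2\ell+2$.
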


\Cref{app:anytime-proofs} contains the proof of the lemmas above.

\paragraph{Completing the proof of
\texorpdfstring{\Cref{thm:anytime}.}{Theorem 1.2.}}

Suppose that $J(\alpha,\lambda)<0$, and choose $\eta_\alpha\in(0,1)$ so that
$B_{\alpha,\lambda}\eta_\alpha\leq-J(\alpha,\lambda)/2$. For every $q\geq2$, every integer $1\leq\ell\leq\eta_\alpha q$, and all positive sequences $(x_i)_{i=1}^q$ and $(\omega_i)_{i=1}^q$ satisfying \eqref{eq:anytime-clean-condition}, \cref{lem:truncated-product} gives
\begin{equation}
\label{eq:anytime-uniform-product}
  E_{\mathrm{end}}
  \prod_{i=1}^{q-1}K_{\omega_i,\omega_{i+1}}(x_i,x_{i+1})
  \leq
  A_{\alpha,\lambda}
  e^{qJ(\alpha,\lambda)+B_{\alpha,\lambda}(\ell+1)}
  \leq
  A_{\alpha,\lambda}e^{B_{\alpha,\lambda}}
  e^{qJ(\alpha,\lambda)/2}
  \leq C_{\alpha,\lambda}.
\end{equation}
Thus \eqref{eq:anytime-clean-target} is uniform in $q$ and $\ell$, and \cref{lem:anytime-sequence-reduction} yields, for every nonnegative infinite schedule,
\begin{equation}
\label{eq:anytime-limsup-rate}
  \limsup_{n\to\infty}
  n^{\frac{2(1+\alpha)}{2+\alpha}}R_n(H_n)>0.
\end{equation}

It remains only to find a pair $(\alpha,\lambda)$ for which $J(\alpha,\lambda)<0$.  For $\alpha=0.6342$ and $\lambda=0.4506$, numerical integration gives $J(0.6342,0.4506)<-0.00005<0$.  Since $2(1+0.6342)/(2+0.6342)=1.240756\ldots<1.2408$, no nonnegative infinite schedule satisfies $R_n(H_n)=o(n^{-1.2408})$.
\Cref{app:scalar-variational-problem} explains the optimization over $(\alpha,\lambda)$ and the numerical verification.
We have now proved the nonnegative case of \cref{thm:anytime}.
\Cref{app:negative-stepsizes} further extends it to arbitrary stepsize schedules.

\section{Concluding Remarks}
\label{sec:concluding-remarks}

In this paper, we improve lower bounds for GD with predetermined stepsizes in both the non-anytime and anytime settings. Starting from the hard-function construction and the corresponding product-form lower bound in \cref{lem:hard-product}, we perform a finer term-by-term analysis of the consecutive-pair factors in \eqref{eq:sequence-bound}. In the non-anytime setting, this improves the lower bound in~\cite{Tsai2026} from $\Omega(n^{-1.7321})$ to $\Omega(n^{-1.6342})$. Combined with the finite-to-anytime transfer of~\cite[Theorem~4.1]{TsaiFatkhullinZhangHe2026}, the same analysis improves the anytime lower bound from $\Omega(n^{-4/3})$ to $\Omega(n^{-1.2408})$. Both lower bounds continue to hold when negative stepsizes are allowed.

Closing the gaps between these lower bounds and their corresponding upper bounds remains a significant open problem.
Since our term-by-term estimates used to prove the sequence inequality in \eqref{eq:sequence-bound} appear fairly tight, this suggests that our current use of \cref{lem:hard-product} through \cref{lem:fixed-sequence-reduction} may not suffice to close the gap to the $O(n^{-\log_2(1+\sqrt{2})})$ silver-schedule upper bound~\cite{AltschulerParrilo2023,GrimmerShuWang2025Composing}.
 
To elaborate, while \cref{lem:hard-product} gives a lower bound for every selection of long steps, the proof of \cref{lem:fixed-sequence-reduction} uses only the $q$ largest excesses at the $q$ chosen in \eqref{eq:choice-of-q}. This choice guarantees the scale estimate \eqref{eq:Dq-universal-upper-bound} and the tail inequalities in \eqref{eq:sequence-constraints}, and is therefore sufficient for our proof. However, it is not known to maximize the lower bound in \eqref{eq:hard-product}. It remains open whether optimizing over several values of $q$, or combining the corresponding selections, can yield a stronger analogue of \cref{lem:fixed-sequence-reduction}. A more structural question is whether \cref{lem:hard-product} is already sufficient to prove an $\Omega(n^{-\log_2(1+\sqrt{2})})$ lower bound matching the silver-stepsize rate~\cite{AltschulerParrilo2023}.
We suspect that it is necessary to resort to alternative hard functions with a hierarchy of scales, mirroring the recursive structure of silver stepsizes.

For the anytime case, the exponent $2(1+\alpha)/(2+\alpha)$ arises from the finite-to-anytime transfer in \cref{lem:anytime-sequence-reduction}. 
The argument combines the terminal-step estimate \eqref{eq:app-terminal-step-bound} with the tail bounds in \eqref{eq:app-finite-prefix-bounds} at horizons where the final stepsize is the largest seen so far.
It remains open whether coupling several horizons, or replacing the terminal-step estimate by a multi-step bound, can yield a stronger anytime lower bound.

\section*{AI Disclosure}
For the non-anytime result in \cref{sec:technical-overview-fixed,sec:nonanytime-proof}, we used ChatGPT-5.6 Sol over multiple rounds to help develop the improvement. 
ChatGPT's initial proof was disorganized and difficult to follow. The authors subsequently digested it in detail, worked through the calculation, and reorganized and rewrote the presentation.

After further review, the authors identified that the anytime bound could also be improved by combining the refined non-anytime analysis developed in this paper with the framework of Tsai et al.~\cite{TsaiFatkhullinZhangHe2026}, and both bounds continue to hold when negative stepsizes are allowed. 
We then used ChatGPT-5.6 Sol to help develop the proof of this extension, presented in \cref{sec:anytime,app:anytime-proofs,app:negative-stepsizes}.

The authors take full responsibility for the correctness and originality of all content.

\bibliographystyle{alpha}
\bibliography{ref}

\clearpage
\appendix
\phantomsection
\addcontentsline{toc}{section}{Appendix}
\section{Hard-Function Construction}
\label[appendix]{app:hard-functions}

This appendix recalls the geometric idea behind the construction used in
\cref{lem:hard-product}. The formal realization and all projection
inequalities are proved in~\cite[Theorem~4.1]{MaChen2026}; see
also~\cite{Tsai2026} for a complementary geometric exposition. We first
explain why this construction is a natural response to long steps.

\paragraph{Two motivating facts.}

\begin{fact}[$\Omega(n^{-1})$ Huber lower bound]
\label{huberfact}
For every nonnegative stepsize schedule $H$, the Huber construction gives
\[
  R_n(H)\geq \frac{1}{1+2\sum_{k=1}^n h_k}.
\]
In particular, if $h_k\le1$ for every $1\le k\le n$, then
$R_n(H)\ge 1/(2n+1)$.
\end{fact}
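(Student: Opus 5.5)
The Huber construction should work: a one-dimensional (or one-direction) Huber function on which the GD iterates move monotonically toward the minimizer but very slowly. Work with $L=1$ in $d=1$. Fix a threshold $\delta>0$ and set
\[
  f(x)=\begin{cases}\delta|x|-\delta^2/2, & |x|\ge\delta,\\ x^2/2, & |x|<\delta,\end{cases}
\]
which is convex, $1$-smooth, and minimized at $x^\star=0$ with $f(0)=0$. Start at $x_1=R>0$, and choose $\delta$ so that the iterates never enter the quadratic region before the last step. While $x_k\ge\delta$ we have $\nabla f(x_k)=\delta$, so $x_{k+1}=x_k-h_k\delta$. Since $h_k\ge0$, the iterates decrease monotonically and
\[
  x_{n+1}=R-\delta\sum_{k=1}^n h_k.
\]

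Write $S:=\sum_k h_k$ and choose $\delta=R/(1+2S)$. Then
\[
  x_{n+1}=R\,\frac{1+S}{1+2S}\ge\delta,
\]
and every earlier iterate is at least $x_{n+1}\ge\delta$. So all iterates stay in the linear region, and the monotone decrease justifies using the constant gradient $\delta$ at every step.

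Next compute the objective gap:
\[
  f(x_{n+1})=\delta x_{n+1}-\frac{\delta^2}{2}
  =\frac{R^2}{1+2S}\left(\frac{1+S}{1+2S}-\frac{1}{2(1+2S)}\right)
  =\frac{R^2}{2(1+2S)}.
\]
Divide by $\tfrac12\|x_1-x^\star\|^2=R^2/2$. This gives the ratio $1/(1+2S)$, and hence $R_n(H)\ge 1/(1+2\sum_k h_k)$.

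For the special case, if every $h_k\le1$ then $S\le n$, which yields $R_n(H)\ge1/(2n+1)$.

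The only delicate point is choosing $\delta$ so that the iterates stay in the linear region. The choice $\delta=R/(1+2S)$ is tuned exactly to balance this constraint against maximizing the gap, and it is what produces the constant $1+2S$. Nonnegativity of the stepsizes is used only to guarantee that the iterates decrease monotonically, so that no iterate overshoots past $0$ or re-enters the quadratic zone.
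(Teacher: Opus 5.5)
Your proof is correct and matches the paper's argument: the paper runs GD on the same Huber function $\phi_\delta$ from $u_1=1$ with $\delta=(1+2\sum_k h_k)^{-1}$. It checks that every iterate stays in the affine region $u\ge\delta$, and then computes $2\phi_\delta(u_{n+1})=\delta$; your general starting point $x_1=R$ with $\delta=R/(1+2S)$ is just a rescaling of this.
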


\Cref{huberfact} is proved by setting $\delta=(1+2\sum_k h_k)^{-1}$ and considering the one-dimensional Huber function
\[
  \phi_\delta(u)=
  \begin{cases}
    u^2/2, & |u|\le\delta,\\
    \delta|u|-\delta^2/2, & |u|>\delta.
  \end{cases}
\]
This quadratic--linear function was introduced by Huber~\cite{Huber1964}. Its use as a tight worst-case instance for constant-step gradient descent appears in \cite[Theorem~3.2]{DroriTeboulle2014}. See also \cite[Footnote~1]{AltschulerParrilo2023}.

Starting from $u_1=1$, all GD iterates stay in the affine region $u\geq\delta$, where $\phi_\delta'(u)=\delta$.
Thus $u_{n+1}=1-\delta\sum_k h_k$, and direct substitution gives $2\phi_\delta(u_{n+1})=\delta$. 
Hence a schedule that improves on the $n^{-1}$ scale must use steps strictly larger than 1.

\begin{fact}\label{fact:quadratic-chain}
In the standard quadratic-chain proof of the classical first-order oracle lower bound~\cite[Section~2.1.2, Lemma~2.1.5]{Nesterov2004}, let $\mathcal R_k=\operatorname{span}\{e_1,\ldots,e_k\}$. The tridiagonal hard quadratic satisfies
\[
  x\in\mathcal R_k
  \quad\Longrightarrow\quad
  \nabla f(x)\in\mathcal R_{k+1}.
\]
Consequently, starting from $x_0=0$, induction shows that a first-order method satisfying the usual linear-span condition can reveal at most one new coordinate per oracle call. A chain whose length grows with the horizon yields the classical $\Omega(n^{-2})$ lower bound \cite[Theorem~2.1.7]{Nesterov2004}.
\end{fact}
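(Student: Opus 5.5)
The plan is to prove the span property directly from the tridiagonal structure of the hard quadratic, and then obtain the ``one new coordinate per oracle call'' statement by induction on the iteration counter. I will not rederive the final $\Omega(n^{-2})$ estimate; for that I will cite \cite[Theorem~2.1.7]{Nesterov2004}.

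\textbf{Step 1: the gradient map.} Recall Nesterov's chain of length $m$:
\[
  f_m(x)=\frac L4\Bigl[\tfrac12\Bigl(x_1^2+\sum_{i=1}^{m-1}(x_i-x_{i+1})^2+x_m^2\Bigr)-x_1\Bigr].
\]
Its gradient is
\[
  \nabla f_m(x)=\frac L4\,(A_m x-e_1),
\]
where $A_m$ is the tridiagonal matrix with $2$ on the diagonal and $-1$ on the two off-diagonals, padded by zeros outside the leading $m\times m$ block.

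\textbf{Step 2: the span inclusion.} Since $A_m$ is tridiagonal, its $(i,j)$ entry vanishes whenever $|i-j|\ge2$. Hence for $x\in\mathcal R_k$ the vector $A_mx$ is supported on the coordinates $\{1,\ldots,k+1\}$. Also $e_1\in\mathcal R_1\subseteq\mathcal R_{k+1}$. Together these give $\nabla f_m(x)\in\mathcal R_{k+1}$, which is the claimed implication.

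\textbf{Step 3: induction on iterates.} The linear-span condition says
\[
  x_j\in x_0+\operatorname{span}\{\nabla f(x_0),\ldots,\nabla f(x_{j-1})\}.
\]
For GD this holds trivially, since $x_j=x_0-\sum_{i<j}h_i\nabla f(x_i)$. I will show $x_j\in\mathcal R_j$ by induction, with $\mathcal R_0=\{0\}$.
\begin{itemize}
  \item Base case: $x_0=0\in\mathcal R_0$.
  \item Inductive step: if $x_i\in\mathcal R_i\subseteq\mathcal R_{j-1}$ for all $i<j$, then Step 2 gives $\nabla f(x_i)\in\mathcal R_{i+1}\subseteq\mathcal R_j$ for each $i<j$, so $x_j\in\mathcal R_j$.
\end{itemize}
Thus each oracle call enlarges the support by at most one coordinate.

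\textbf{Step 4: the lower bound (cited).} Take a chain of length $2n+1$. Every iterate then satisfies $f(x_j)\ge\min_{\mathcal R_j}f_{2n+1}$, and the minimizer of $f_{2n+1}$ restricted to $\mathcal R_j$ is available in closed form. Comparing it with the global minimizer $x^\star$ and bounding $\|x^\star\|^2$ gives the $\Omega(n^{-2})$ bound, exactly as in \cite[Theorem~2.1.7]{Nesterov2004}.

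None of these steps is hard. The only point needing care is the bookkeeping for the zero-padded matrix when $k\ge m$: then the inclusion is trivially true, because all coordinates beyond $m$ stay zero.
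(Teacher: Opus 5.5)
Your proposal is correct and follows the same route as the paper, which states this Fact by citing Nesterov's argument. The tridiagonal structure of $\nabla f_m(x)=\frac L4(A_mx-e_1)$ gives the inclusion $\mathcal R_k\to\mathcal R_{k+1}$; induction from $x_0=0$ under the linear-span condition (which GD satisfies for any real stepsizes) gives $x_j\in\mathcal R_j$; and the $\Omega(n^{-2})$ rate is cited from \cite[Theorem~2.1.7]{Nesterov2004} via the chain $f_{2n+1}$, with your explicit gradient and support computations filling in the details the paper leaves to that reference.
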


The relevant idea is the supply of fresh orthogonal directions: after $k$ calls, part of the hard instance still lies outside the subspace reached by the algorithm.

\paragraph{Forcing long steps to switch gradient directions.}
It is useful to view the construction in \cite[Theorem~4.1]{MaChen2026} as a combination of the above two ideas. 
The Huber function supplies a constant-gradient affine region on which the intervening updates make only one-dimensional progress.
Introducing fresh coordinates prevents the selected long steps from repeatedly exploiting the same one-dimensional overshoot. This restricts the convergence rate, thereby establishing a tighter lower bound.

More precisely, fix selected indices
\[
  0<t_1<\cdots<t_q\le n,
  \qquad h_{t_i}>1,
\]
set $t_0=0$ and divide the schedule into $q$ transition blocks and one terminal block. For $1\le i\le q$, transition block $i$ contains $(h_{t_{i-1}+1},\dots,h_{t_i})$, while the terminal block contains $(h_{t_q+1},\dots,h_n)$.
The construction first specifies a desirable trajectory.
Place anchors
\[
  X_i=\lambda_i e_i,
  \qquad 1\le i\le q+1,
\]
where $e_1,\ldots,e_{q+1}$ are orthonormal.
Then, for each $1\le i\le q$, arrange for the iterates to start at the anchor $X_i$, make only one-dimensional progress, and reach the next anchor $X_{i+1}$ at the final selected long step.
As a consequence, the gradient $g_i$ for this block is fixed to be 
\[
g_i=\frac{X_{i}-X_{i+1}}{S_i+h_{t_i}},
\qquad 1\le i\le q.
\]
Note that after the final selected long step lands on the next coordinate-axis anchor $X_{i+1}$, the active gradient changes from $g_i$ to $g_{i+1}$. 
After the last selected step, the remaining iterates use
\[
  g_{q+1}=\frac{\lambda_{q+1}}{1+2S_{q+1}}e_{q+1}.
\]
\Cref{fig:hard-trajectory} illustrates this geometry.

\begin{figure}[t]
  \centering
  \begin{tikzpicture}[
  font=\small,
  intervening/.style={-{Stealth[length=4pt]},draw=shortteal,line width=.9pt},
  long/.style={-{Stealth[length=6pt,width=5pt]},draw=longpurple,line width=2pt},
  iterate/.style={circle,fill=shortteal,inner sep=1.25pt},
  hardanchor/.style={circle,fill=longpurple,draw=white,line width=.5pt,inner sep=2.3pt},
  shortlabel/.style={font=\scriptsize,text=shortteal!70!black,fill=white,
    fill opacity=.92,text opacity=1,inner sep=1.5pt,rounded corners=1pt}
]
  \coordinate (O) at (0,0);
  \coordinate (X1) at (4.35,0);
  \coordinate (X2) at (-2.35,-2.35);
  \coordinate (X3) at (0,3.75);

  \coordinate (P11) at ($(X1)!.055!(X2)$);
  \coordinate (P12) at ($(X1)!.120!(X2)$);
  \coordinate (P13) at ($(X1)!.195!(X2)$);
  \coordinate (P14) at ($(X1)!.275!(X2)$);
  \coordinate (P15) at ($(X1)!.360!(X2)$);
  \coordinate (P21) at ($(X2)!.050!(X3)$);
  \coordinate (P22) at ($(X2)!.115!(X3)$);
  \coordinate (P23) at ($(X2)!.190!(X3)$);
  \coordinate (P24) at ($(X2)!.275!(X3)$);
  \coordinate (P25) at ($(X2)!.365!(X3)$);
  \coordinate (P31) at (0,3.38);
  \coordinate (P32) at (0,2.94);
  \coordinate (P33) at (0,2.53);
  \coordinate (P34) at (0,2.04);
  \coordinate (F) at (0,1.50);

  \draw[-{Stealth[length=5pt]},softgray] (O)--(4.95,0)
    node[right] {$e_1$};
  \draw[-{Stealth[length=5pt]},softgray] (O)--(-3.05,-3.05)
    node[below left] {$e_2$};
  \draw[-{Stealth[length=5pt]},softgray] (O)--(0,4.40)
    node[above] {$e_3$};
  \node[below right=2pt and 3pt] at (O) {$0$};

  \draw[intervening] (X1)--(P11);
  \draw[intervening] (P11)--(P12);
  \draw[intervening] (P12)--(P13);
  \draw[intervening] (P13)--(P14);
  \draw[intervening] (P14)--(P15);
  \path (X1)--(P15)
    node[midway,below=7pt,sloped,shortlabel] {$h_1,\ldots,h_{t_1-1}$};
  \draw[long] (P15)--(X2)
    node[midway,below,sloped,text=longpurple!70!black] {$h_{t_1}>1$};

  \draw[intervening] (X2)--(P21);
  \draw[intervening] (P21)--(P22);
  \draw[intervening] (P22)--(P23);
  \draw[intervening] (P23)--(P24);
  \draw[intervening] (P24)--(P25);
  \path (X2)--(P25)
    node[midway,above=7pt,sloped,shortlabel]
      {$h_{t_1+1},\ldots,h_{t_2-1}$};
  \draw[long] (P25)--(X3)
    node[midway,left=5pt,text=longpurple!70!black] {$h_{t_2}>1$};

  \draw[intervening] (X3)--(P31);
  \draw[intervening] (P31)--(P32);
  \draw[intervening] (P32)--(P33);
  \draw[intervening] (P33)--(P34);
  \draw[intervening] (P34)--(F);
  \path (X3)--(F)
    node[midway,right=8pt,shortlabel] {$h_{t_2+1},\ldots,h_n$};

  \foreach \p in {P11,P12,P13,P14,P15,P21,P22,P23,P24,P25,
                   P31,P32,P33,P34,F}
    \node[iterate] at (\p) {};
  \foreach \p in {X1,X2,X3}
    \node[hardanchor] at (\p) {};

  \node[above=4pt] at (X1) {$X_1=\lambda_1e_1$};
  \node[below right=3pt and 2pt] at (X2) {$X_2=\lambda_2e_2$};
  \node[above right=2pt and 5pt] at (X3) {$X_3=\lambda_3e_3$};
  \node[right=5pt] at (F) {$x_{n+1}$};
\end{tikzpicture}
  \caption{Only the selected long steps (purple arrows) switch the gradient directions.}
  \label{fig:hard-trajectory}
\end{figure}

\paragraph{Construction of the hard function via a Moreau envelope.}
The remaining question is how to realize this prescribed piecewise-constant
gradient pattern with one globally defined smooth convex function. 
Intuitively, this is possible by gluing Huber functions together. 
The selected long steps provide enough room for the landscape of the hard function to change while maintaining its smoothness.

More precisely, let
\[
  C:=\operatorname{conv}\{0,g_1,\ldots,g_{q+1}\},
\]
and consider its support function 
\[
\sigma_C(z):=\max_{g\in C}\langle g,z\rangle.
\]
The support function stores the candidate block gradients as slopes.
Now take the Moreau envelope of the support function
$\sigma_C$,
\[
  f_C(x):=
  \min_z\left\{\sigma_C(z)+\frac12\lVert x-z\rVert^2\right\}.
\]
Since $\sigma_C$ is proper, closed, and convex, its Moreau envelope $f_C$ is convex and has a $1$-Lipschitz gradient~\cite[Propositions~5.b, 7.b, and~7.d]{Moreau1965}.
The hard function $f_C$ satisfies the distance and projection identities~\cite[Lemma~4.2]{MaChen2026}
\[
  f_C(x)=\frac12\lVert x\rVert^2-\frac12d(x,C)^2,
  \qquad
  \nabla f_C(x)=\Pi_C(x).
\]
Completing the square also gives
\[
  f_C(x)=
  \max_{g\in C}\left\{\langle g,x\rangle-\frac12\lVert g\rVert^2\right\}.
\]
This hard function can be regarded as a multidimensional analogue of the Huber function. 
On $C$ it equals $\frac12\lVert x\rVert^2$. 
On a normal-cone region where $\Pi_C(x)=g_i$, it equals $\langle g_i,x\rangle-\frac12\lVert g_i\rVert^2$ and is therefore affine.
The trajectory stays in these constant-projection regions between selected long steps. 
In this sense, the construction retains the quadratic region $C$ and affine outer pieces of the one-dimensional Huber function while using new coordinate axes to keep the effects of several selected long
steps separate.

Hence prescribing $g_i$ as the gradient throughout a block reduces to the geometric condition
\[
  g_i=\Pi_C(x)
  \quad\Longleftrightarrow\quad
  \langle x-g_i,v-g_i\rangle\le0
  \quad\text{for every }v\in C.
\]
Since $C$ is the convex hull of finitely many generators, it is enough to check this inequality at $0,g_1,\ldots,g_{q+1}$. 
The proof in~\cite[Theorem~4.1]{MaChen2026} parameterizes the positive scales by the amplitude ratios $\gamma_i:=\lambda_{i+1}/\lambda_i$. 
That theorem gives an explicit admissible upper bound for each $\gamma_i^2$. Choosing the ratios within these ranges ensures the projection inequalities along every intended block.
Saturating the explicit upper bounds on $\gamma_i$ maximizes the final gap $f_C(x_{n+1})-f_C(0)$ and yields the lower bound in \cref{lem:hard-product}. 

\section{Properties of \texorpdfstring{$J(\alpha,\lambda)$}{J(alpha, lambda)} and Numerical Solution}
\label[appendix]{app:scalar-variational-problem}

The proofs of \cref{thm:fixed-horizon,thm:anytime} reduce to finding the smallest possible $\alpha>0$ for which $J(\alpha,\lambda)\le0$ for some $\lambda>0$; see \eqref{eq:product-by-J}--\eqref{eq:fixed-horizon-rate}.\footnote{For the anytime case, the inequality must be strict; see \eqref{eq:anytime-uniform-product}--\eqref{eq:anytime-limsup-rate}.}
Accordingly, for each exponent parameter $\alpha$, we optimize over the scalar parameter $\lambda$ in \eqref{eq:J-definition} and define
\begin{equation}
\label{eq:optimized-J}
  \hat J(\alpha):=\inf_{\lambda>0}J(\alpha,\lambda).
\end{equation}
It remains to find the smallest $\alpha>0$ for which $\hat J(\alpha)\le0$.
The following proposition gives the properties needed to determine $\alpha$ and $\lambda$.

\begin{proposition}
\label{prop:J-variational-properties}
For every $\alpha>0$, the function $\lambda\mapsto J(\alpha,\lambda)$ is strictly convex and has a unique minimizer $\lambda_\star(\alpha)\in(1/4,1/2)$.
The function $\hat J$ is continuous and strictly decreasing.
Hence it has a unique zero $\alpha_\star$ on every interval where it changes sign.
\end{proposition}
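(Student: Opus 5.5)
The plan is to treat $\lambda$ and $\alpha$ separately. For $\lambda$, we use that $\Gamma_\lambda$ is a supremum of functions affine in $\lambda$. For $\alpha$, we use the coordinatewise decrease of $\Gamma_\lambda$ from \cref{lem:gamma-geometry} together with monotonicity of $W_\alpha$ in $\alpha$.

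\emph{Convexity in $\lambda$.} For fixed $(w,z)$, each map $\lambda\mapsto\log K_{w,z}(x,y)-\lambda(x+y)$ is affine, so $\lambda\mapsto\Gamma_\lambda(w,z)$ is convex.
\begin{itemize}
\item By the envelope argument already used in \eqref{eq:partialwGammalambda}, and by uniqueness and smoothness of the optimizer (\cref{lem:gamma-geometry}), we get $\partial_\lambda\Gamma_\lambda=-(x_*+y_*)$.
\item For strictness we use the $\tau$-parametrization from part (5) of \cref{lem:gamma-geometry}. The optimizer satisfies $g(\tau_*;w,z)=0$, where $\partial_\lambda g=-1$ and $\partial_\tau g<0$. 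Hence $\partial_\lambda\tau_*<0$.
\item Since $x_*=f_w(\tau_*)-w$ and $y_*=f_z(\tau_*)-z$ are strictly increasing in $\tau_*$, we get $\partial^2_\lambda\Gamma_\lambda=-\partial_\lambda(x_*+y_*)>0$.
\item Integrating in $t$ gives strict convexity of $J(\alpha,\cdot)$, with
\[
  \partial_\lambda J(\alpha,\lambda)=2-2\int_0^{1/2}\bigl(x_*+y_*\bigr)\bigl(W_\alpha(t),W_\alpha(1-t)\bigr)\,dt .
\]
Differentiation under the integral is justified by the uniform bounds $x_*,y_*\le\tau_*\le C_{\alpha,\lambda}$ from the proof of \cref{lem:uniform-lipschitz}, which are locally uniform in $\lambda$.
\end{itemize}
The minimizer is therefore unique, and it lies in $(1/4,1/2)$ exactly when the averaged optimizer mass $2\int_0^{1/2}(x_*+y_*)\,dt$ exceeds $1$ at $\lambda=1/4$ and is below $1$ at $\lambda=1/2$.

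\emph{Locating $\lambda_\star$; this is the main obstacle.} I would prove the two endpoint inequalities pointwise in $t$ where possible, using the first-order identities
\[
  \lambda=\frac1{2\tau_*}+\frac1\Sigma,\qquad x_*A=w\tau_*,\qquad y_*B=z\tau_* .
\]
\begin{itemize}
\item At $\lambda=1/2$: both terms are positive, so $\tau_*>1$ and $\Sigma>2$. The aim is to turn these into $x_*+y_*<1$, which makes $\partial_\lambda J>0$. First I would use $x_*\le\tau_*$ and $x_*=w\tau_*/(w+x_*)$ to bound $x_*+y_*$ through $\Sigma$, then feed this back into $\lambda=\tfrac12$.
\item At $\lambda=1/4$: here $\tau_*\ge2$. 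I would try to show $x_*+y_*>1$ at least on average over $t$, exploiting that $W_\alpha(t)\to\infty$ as $t\to0$, where $x_*\to\tau_*$.
\item If pointwise bounds fail for small $w$, I would use the explicit shape of $W_\alpha$ and estimate the integral directly, splitting $[0,1/2]$ into a region near $0$ (large $w$) and the rest.
\end{itemize}
I expect this step to be the delicate computation, since the bounds must hold for all $\alpha>0$.

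\emph{Continuity of $\hat J$.} $J$ is jointly continuous in $(\alpha,\lambda)$. This follows by dominated convergence: $\Gamma_\lambda$ is smooth, and the Lipschitz extension in \cref{lem:uniform-lipschitz} has constants locally uniform in $(\alpha,\lambda)$, which gives an integrable dominating bound. Since the minimizer lies in the compact interval $[1/4,1/2]$, we have $\hat J(\alpha)=\min_{\lambda\in[1/4,1/2]}J(\alpha,\lambda)$. A minimum of a jointly continuous function over a fixed compact set is continuous in $\alpha$.

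\emph{Strict decrease of $\hat J$.} For $0<t\le1$,
\[
  \partial_\alpha\log W_\alpha(t)=\frac1\alpha-\log t>0,
\]
so $W_\alpha(t)$ is strictly increasing in $\alpha$. By the strict coordinatewise decrease of $\Gamma_\lambda$ (\cref{lem:gamma-geometry}), $J(\alpha,\lambda)$ is strictly decreasing in $\alpha$ for each fixed $\lambda$. Hence for $\alpha<\alpha'$,
\[
  \hat J(\alpha')\le J(\alpha',\lambda_\star(\alpha))<J(\alpha,\lambda_\star(\alpha))=\hat J(\alpha).
\]
A continuous, strictly decreasing function has at most one zero, and by the intermediate value theorem it has exactly one on any interval where it changes sign.
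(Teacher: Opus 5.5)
\textbf{Gap: the localization $\lambda_\star(\alpha)\in(1/4,1/2)$ is not proved, and your targets for it are wrong.}

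Your other steps are sound. Strict convexity via $\partial_\lambda\tau_*<0$ and monotonicity of $f_u$ is a valid substitute for the paper's $\partial_{\lambda\lambda}\Gamma_\lambda=-\mathbf 1^\top H^{-1}\mathbf 1>0$. Strict decrease in $\alpha$ and continuity by dominated convergence match the paper.

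The localization you leave as an unresolved ``main obstacle'', and your thresholds are off by a factor of $2$. From $\partial_\lambda J=2-2\int_0^{1/2}(x_*+y_*)\,dt$ over an interval of length $1/2$, you need $\int_0^{1/2}(x_*+y_*)\,dt>1$ at $\lambda=1/4$ and $<1$ at $\lambda=1/2$. Pointwise, that means $x_*+y_*>2$ at $\lambda=1/4$, and $x_*+y_*<2$ for $t\ne1/2$ at $\lambda=1/2$. Your target $x_*+y_*<1$ at $\lambda=1/2$ is false for every $(w,z)$, as shown below. Your target $x_*+y_*>1$ at $\lambda=1/4$ would only give $\partial_\lambda J<1$.

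This step also underpins the rest of your argument. Writing $\hat J(\alpha)=\min_{[1/4,1/2]}J(\alpha,\cdot)$ for continuity presupposes it. So does using $\lambda_\star(\alpha)$ in your strict-decrease chain, since strict convexity alone does not give a minimizer.

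\textbf{The missing idea} is an exact identity that gives a pointwise bound uniform in $(w,z)$. With it there is no need to split $[0,1/2]$ or track $\alpha$. Put $a:=x_*/(w+x_*)$ and $b:=y_*/(z+y_*)$.

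Multiply the first-order conditions $\frac1{2x_*}+\frac1\Sigma-\frac1{2A}=\lambda$ and $\frac1{2y_*}+\frac1\Sigma-\frac1{2B}=\lambda$ by $x_*$ and $y_*$ and add. This gives $\lambda(x_*+y_*)=1+\frac{x_*+y_*}{\Sigma}-\frac{a+b}{2}$.

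The coupling $\tau_*=x_*A/w=y_*B/z$ gives $A=\tau_*(1-a)/a$ and $B=\tau_*(1-b)/b$. Eliminating $A/B$ then yields
\[
  \lambda(x_*+y_*)=1-\frac{(a-b)^2}{2(a+b-2ab)}.
\]
Since $a+b-2ab-(a-b)^2=a(1-a)+b(1-b)>0$, this gives $\tfrac12<\lambda(x_*+y_*)\le1$.

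Equality on the right forces $a=b$. Because $w=\tau_*(1-a)^2/a$ and $z=\tau_*(1-b)^2/b$, this forces $w=z$, i.e.\ only $t=1/2$. Both endpoint signs of $\partial_\lambda J$, including strictness at $\lambda=1/2$, then follow at once. This is exactly the paper's argument.
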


\begin{proof}
Let $u_\lambda=(x_\lambda,y_\lambda)$ be the optimizer defining $\Gamma_\lambda(w,z)$, and let $H$ be the Hessian of $\log K_{w,z}$ at $u_\lambda$.
By \cref{lem:gamma-geometry}, this optimizer is unique and $H$ is negative definite.
Applying the implicit function theorem to the first-order condition $\nabla_{x,y}\log K_{w,z}(u_\lambda)=\lambda\mathbf 1$ shows that $u_\lambda$ is differentiable in $\lambda$ and that $u_\lambda'=H^{-1}\mathbf 1$.
Differentiating the optimized value then gives
\[
\begin{aligned}
  \partial_\lambda\Gamma_\lambda(w,z)
  &=-\mathbf 1^\top u_\lambda=-(x_\lambda+y_\lambda),\\
  \partial_{\lambda\lambda}\Gamma_\lambda(w,z)
  &=-\mathbf 1^\top H^{-1}\mathbf 1>0.
\end{aligned}
\]
Consequently, $J(\alpha,\cdot)$ is strictly convex.

To determine the signs of $\partial_\lambda J$ at the endpoints
$\lambda=1/4$ and $\lambda=1/2$, we need bounds on the optimizer $(x,y)$.
Set $a:=x/(w+x)$ and $b:=y/(z+y)$.  The first-order conditions give
\[
  \lambda(x+y)
  =1-\frac{(a-b)^2}{2(a+b-2ab)}.
\]
Since
$a+b-2ab-(a-b)^2=a(1-a)+b(1-b)>0$, this identity yields
$1/2<\lambda(x+y)\le1$.
Equality in the upper bound forces $a=b$.
Writing $\tau=x(w+x)/w=y(z+y)/z$, we have
$w=\tau(1-a)^2/a$ and $z=\tau(1-b)^2/b$.
Thus equality forces $w=z$.  Therefore
\begin{equation}
\label{eq:optimizer-sum-bound}
  \frac12<\lambda(x+y)\le1.
\end{equation}
Moreover, the upper bound is strict whenever $w\ne z$.

By \eqref{eq:optimizer-sum-bound}, $x+y\le1/\lambda$, so $|\partial_\lambda\Gamma_\lambda|$ has a uniform bound on every compact $\lambda$-interval.
We may therefore differentiate under the integral in \eqref{eq:J-definition} to obtain
\begin{equation}
\label{eq:J-lambda-derivative}
  \partial_\lambda J(\alpha,\lambda)
  =2-2\int_0^{1/2}
  \bigl(x_{\alpha,\lambda}(t)+y_{\alpha,\lambda}(t)\bigr)\,dt,
\end{equation}
where $(x_{\alpha,\lambda}(t),y_{\alpha,\lambda}(t))$ is the optimizer corresponding to $(w,z)=(W_\alpha(t),W_\alpha(1-t))$.
It follows from \eqref{eq:J-lambda-derivative} that $\partial_\lambda J(\alpha,1/4)<0$ and $\partial_\lambda J(\alpha,1/2)>0$.
The latter inequality is strict because equality in $\lambda(x+y)\le1$ can occur only when $w=z$, which here happens only at $t=1/2$.
Thus the unique minimizer lies in $(1/4,1/2)$.

For fixed $\lambda$, $W_\alpha(t)$ increases strictly with $\alpha$, while $\Gamma_\lambda$ decreases strictly in each coordinate.
Therefore $J(\alpha,\lambda)$ decreases strictly with $\alpha$.
If $0<\alpha_1<\alpha_2$, evaluating $J(\alpha_2,\cdot)$ at $\lambda_\star(\alpha_1)$ gives $\hat J(\alpha_2)<\hat J(\alpha_1)$.

It remains to prove continuity.  The estimates in the proof of
\cref{lem:uniform-lipschitz} are locally uniform in $(\alpha,\lambda)$.
Consequently, the integrand in \eqref{eq:J-definition} extends continuously
to $t=0$, locally uniformly on compact parameter sets, and is locally bounded
there.  Together with the implicit-function argument above, dominated
convergence shows that $J$ is jointly continuous.
Since the minimizer always lies in the fixed compact interval $[1/4,1/2]$, minimizing over that interval gives continuity of $\hat J$.
\end{proof}

\paragraph{Numerical solution.}
For $u,\tau>0$, write
\[
  X_u(\tau):=\frac{2\tau}{1+\sqrt{1+4\tau/u}}
  =f_u(\tau)-u.
\]
For fixed $\alpha,\lambda>0$ and $0<t\le1/2$, put
$w=W_\alpha(t)$ and $z=W_\alpha(1-t)$.
The optimizer defining $\Gamma_\lambda(w,z)$ is
$x=X_w(\tau)$ and $y=X_z(\tau)$, where $\tau$ is the unique root of
\eqref{eq:tau-eq}.  At $t=0$, we use the continuous extension established
above.
At this root,
\begin{equation}
\label{eq:Gamma-stable-evaluation}
  \Gamma_\lambda(w,z)
  =\log\left[
    \tau\left(\frac1{f_w(\tau)}+\frac1{f_z(\tau)}\right)
  \right]
  -\lambda\bigl(X_w(\tau)+X_z(\tau)\bigr).
\end{equation}
To evaluate $J(\alpha,\lambda)$ and
$\partial_\lambda J(\alpha,\lambda)$, we solve \eqref{eq:tau-eq} at each
quadrature node and use \eqref{eq:Gamma-stable-evaluation} and
\eqref{eq:J-lambda-derivative}.  For fixed $\alpha$,
$\partial_\lambda J(\alpha,\cdot)$ is continuous and strictly increasing
and has opposite signs at $1/4$ and $1/2$.  Bisection therefore gives
$\lambda_\star(\alpha)$. A second bisection applied to the continuous,
strictly decreasing function $\hat J$ gives its zero $\alpha_\star$.

Floating-point computation gives
\begin{equation}
\label{eq:floating-critical-pair}
  \alpha_\star\approx0.63416058454,
  \qquad
  \lambda_\star(\alpha_\star)\approx0.45060731.
\end{equation}
These digits describe the optimized pair but are not used as a rigorous certificate.
The proof uses the nearby pair $(\alpha,\lambda)=(0.6342,0.4506)$.
A computer-assisted calculation using Arb with directed outward rounding
certifies
\begin{equation}
\label{eq:certified-J-value}
  J(0.6342,0.4506)<-0.0000512188<-0.00005<0,
\end{equation}
which is the inequality used in the proofs of
\cref{thm:fixed-horizon,thm:anytime}.

\section{Proofs for the Anytime Lower Bound}
\label[appendix]{app:anytime-proofs}

This appendix proves \cref{lem:anytime-sequence-reduction,lem:truncated-product}.

\subsection{Proof and Intuition of \cref{lem:anytime-sequence-reduction}}
\label{app:proof-anytime-sequence-reduction}

We first explain how the proof of \cref{lem:anytime-sequence-reduction} modifies the finite-to-anytime transfer of \cite[Theorem~4.1]{TsaiFatkhullinZhangHe2026}.
We begin with two lower bounds from one-dimensional hard instances that drive this transfer.
For a prefix \(H_n=(h_1,\ldots,h_n)\), the first one relates \(R_n(H_n)\) to the total stepsize \(h_{1:n}\).
Using convex quadratics, \cite[Lemma~2.1]{TsaiFatkhullinZhangHe2026} gives
\begin{equation}
  R_n(H_n)\ge\frac{1}{4(1+2h_{1:n})}.
  \label{eq:app-quadratic-total-bound}
\end{equation}
We instead use the standard Huber loss from \cref{huberfact}, which gives the stronger bound \(R_n(H_n)\ge \frac{1}{1+2h_{1:n}}\), as stated in \eqref{eq:Huber}.
In particular, if \(R_n(H_n)=o(n^{-p})\) for some \(p>1\), then \(h_{1:n}/n\to\infty\), and the stepsizes are unbounded.

The second bound comes from \cite[Lemma~3.1]{TsaiFatkhullinZhangHe2026}.
Its hard instance is the asymmetric Huber loss
\begin{equation}
  \psi_{\varepsilon,\delta}(u):=
  \begin{cases}
    \delta u-\delta^2/2, & u\ge\delta,\\
    u^2/2, & -\varepsilon\le u\le\delta,\\
    -\varepsilon u-\varepsilon^2/2, & u\le-\varepsilon.
  \end{cases}
  \label{eq:app-asymmetric-huber}
\end{equation}
It is quadratic near the origin and affine on either side.
By choosing \(\varepsilon\) and \(\delta\) so that the update with \(h_m>1\) crosses the quadratic region, their lemma proves
\begin{equation}
  R_n(H_n)\ge
  \frac{(h_m-1)^2}
  {(1+h_{1:m-1})^2(1+2h_{m+1:n})}.
  \label{eq:app-asymmetric-huber-bound}
\end{equation}
We only use the case \(m=n\), which gives
\begin{equation}
  h_n-1\le(1+h_{1:n-1})\sqrt{R_n(H_n)}
  \qquad\text{when }h_n>1.
  \label{eq:app-terminal-step-bound}
\end{equation}

The proof of the \(\Omega(n^{-4/3})\) anytime lower bound in \cite[Theorem~4.1]{TsaiFatkhullinZhangHe2026} combines \eqref{eq:app-quadratic-total-bound}, \eqref{eq:app-asymmetric-huber-bound}, and their Lemmas~2.2 and~2.3.
The latter use convex quadratics to control the total stepsize through the largest stepsize.
\Cref{fig:tsai-anytime-roadmap} summarizes their argument.

\begin{figure}[H]
  \centering
  \begin{tikzpicture}[
    >=Latex,
    ingredient/.style={draw=selectedpurple, fill=selectedpurple!9,
      rounded corners=2pt, align=center, minimum height=1.34cm,
      inner sep=3.5pt},
    sideingredient/.style={ingredient, text width=4.05cm},
    middleingredient/.style={ingredient, text width=6.35cm},
    source/.style={draw=softgray, fill=white, rounded corners=2pt,
      align=center, minimum height=0.68cm, inner xsep=7pt,
      font=\small},
    conclusion/.style={draw=tailgold, fill=tailgold!18, very thick,
      rounded corners=2pt, align=center, minimum height=0.95cm,
      text width=5.65cm},
    connector/.style={thick, draw=softgray},
    flow/.style={-{Latex[length=2.2mm]}, thick, draw=softgray}
  ]
    \node[sideingredient] (total) at (-5.6,0)
      {Total-sum bound~\eqref{eq:app-quadratic-total-bound}\\[2pt]
       \scriptsize cf.~\cite[Lemma~2.1]{TsaiFatkhullinZhangHe2026}};
    \node[middleingredient] (largest) at (0,0)
      {Largest-step control:\\[1pt]
       Bounding \(h_{1:n}\) in terms of
       \(\max_{k\le n}h_k\)\\[2pt]
       \scriptsize cf.~\cite[Lemmas~2.2--2.3]{TsaiFatkhullinZhangHe2026}};
    \node[sideingredient] (terminal) at (5.6,0)
      {Terminal-step bound~\eqref{eq:app-terminal-step-bound}\\[2pt]
       \scriptsize cf.~\cite[Lemma~3.1]{TsaiFatkhullinZhangHe2026}};

    \coordinate (qright) at ($(largest.north)+(0,0.55)$);
    \coordinate (qleft) at (total.north |- qright);
    \coordinate (sourcelevel) at ($(qleft)!0.5!(qright)+(0,0.78)$);
    \node[source] (quadratic) at (sourcelevel)
      {Convex quadratics};
    \node[source] (huber) at (terminal.north |- sourcelevel)
      {Asymmetric Huber loss~\eqref{eq:app-asymmetric-huber}};

    \draw[connector] (quadratic.south) -- (quadratic.south |- qleft);
    \draw[connector] (qleft) -- (qright);
    \draw[flow] (qleft) -- (total.north);
    \draw[flow] (qright) -- (largest.north);
    \draw[flow] (huber.south) -- (terminal.north);

    \coordinate (join) at (0,-1.15);
    \node[conclusion] (barrier) at (0,-2.05)
      {\(\Omega(n^{-4/3})\) anytime lower bound for \(R_n\)\\[1pt]
       \footnotesize cf.~\cite[Theorem~4.1]{TsaiFatkhullinZhangHe2026}};

    \draw[connector] (total.south) |- (join);
    \draw[connector] (largest.south) -- (join);
    \draw[connector] (terminal.south) |- (join);
    \fill[softgray] (join) circle (1.1pt);
    \draw[flow] (join) -- (barrier.north);
  \end{tikzpicture}
  \caption{Roadmap of the proof of
  \cite[Theorem~4.1]{TsaiFatkhullinZhangHe2026}; see also
  \cite[Figure~2]{TsaiFatkhullinZhangHe2026}.}
  \label{fig:tsai-anytime-roadmap}
\end{figure}

Our proof uses the stronger total-sum bound \eqref{eq:Huber} in place of \cite[Lemma~2.1]{TsaiFatkhullinZhangHe2026}, retains the terminal-step bound \eqref{eq:app-terminal-step-bound} from \cite[Lemma~3.1]{TsaiFatkhullinZhangHe2026}, and replaces the use of \cite[Lemmas~2.2 and~2.3]{TsaiFatkhullinZhangHe2026} with the two more refined tail bounds in \eqref{eq:app-finite-prefix-bounds}, valid for every \(m\) such that \(R_n(H)D_m\le c_0\).
The proof of these tail bounds uses the same hard function and variable substitution as the proof of \cref{lem:fixed-sequence-reduction}.

\begin{proof}[Proof of \cref{lem:anytime-sequence-reduction}]
Fix \(\alpha>0\), and assume that \eqref{eq:anytime-clean-condition} implies \eqref{eq:anytime-clean-target}, with the same constants \(\eta_\alpha,C_\alpha\) for every integer \(q\ge 2\) and every integer \(1\le \ell\le \eta_\alpha q\).

Fix a nonnegative finite schedule \(H=(h_1,\ldots,h_n)\).
Recall the notation from \cref{sec:fixed-sequence-reduction-proof}.
Let \(a_1\ge\cdots\ge a_r>0\) be the decreasing rearrangement of the positive excesses \((h_k-1)_+\), and define
\[
  B:=1+\sum_{k=1}^n\min\{h_k,1\},
  \qquad
  D_s:=B+\sum_{j=s+1}^r a_j,
  \qquad 1\le s\le r.
\]

\medskip
\noindent\textbf{Part I: Constructing tail bounds.}\par
We first prove that there are constants \(c_0,C>0\), depending only on \(\alpha\), such that any \(1\le m\le r\) satisfying \(R_n(H)D_m\le c_0\) also satisfies
\begin{equation}
  m a_m\le C D_m,
  \qquad
  D_m\le C(n+1)^{1+\alpha}m^{-\alpha}.
  \label{eq:app-finite-prefix-bounds}
\end{equation}
Together, these two bounds play the role of \cite[Lemmas~2.2 and~2.3]{TsaiFatkhullinZhangHe2026} in our proof.
The condition \(R_n(H)D_m\le c_0\) implies \(D_m\le c_0/R_n(H)\).
The first inequality compares \(D_{m-1}\) and \(D_m\), since \(D_{m-1}=D_m+a_m\le(1+C/m)D_m\), while the second controls \(D_m\) itself.
Applying both bounds with \(m=j\) gives \(a_j\le C(n+1)^{1+\alpha}j^{-1-\alpha}\), whose sum controls the remaining
smaller excesses.

We now prove the two inequalities in \eqref{eq:app-finite-prefix-bounds}.
Note that only the proof of the second inequality uses the hypothesis that \eqref{eq:anytime-clean-condition} implies \eqref{eq:anytime-clean-target}.
If \(r=0\), there is nothing to prove.
We may therefore assume \(r\ge1\). For the moment, let \(c_0>0\),
and let \(m\) be arbitrary such that \(R_n(H)D_m\le c_0\). We collect the required restrictions on \(c_0\) at the end of Part~I.
The \(q=1\) case in the proof of \cref{lem:fixed-sequence-reduction} gives \(R_n(H)D_1\ge1/4\).
Under the restriction \(c_0<1/4\), the case \(m=1\) cannot satisfy
\(R_n(H)D_m\le c_0\). We henceforth consider only \(2\le m\le r\).

Fix \(2\le q\le r\).
Following the notation in the proof of \cref{lem:fixed-sequence-reduction}, let \(t_1<\cdots<t_q\) be the locations of the \(q\) largest positive excesses, with \(t_0=0\) and \(t_{q+1}=n+1\). For $1\le i\le q+1$, set $S_i:=h_{t_{i-1}+1:t_i-1}$; for $1\le i\le q$, define
\[
  x_i:=\frac{q(S_i+1)}{D_q},
  \qquad
  \omega_i:=\frac{q(h_{t_i}-1)}{D_q},
  \qquad 1\le i\le q,
\]
with $\sum_{i=1}^q x_i<q$.
Let \(G\) be the right-hand side of \eqref{eq:hard-product} for these
selected indices. Then \cref{lem:hard-product} gives \(R_n(H)\ge G\), and
\eqref{eq:top-q-lower-bound} yields
\begin{equation}
  R_n(H)\ge
  \frac{q}{D_q E_{\mathrm{end}}
  \prod_{i=1}^{q-1}K_{\omega_i,\omega_{i+1}}(x_i,x_{i+1})},
  \label{eq:app-anytime-hard-bound}
\end{equation}

\smallskip
\noindent\emph{Step 1. Prove the first inequality in \eqref{eq:app-finite-prefix-bounds}.}
Set \(\beta_\alpha:=1+\max\{e,\alpha\}\). We claim that
\begin{equation}
  s a_s\le \beta_\alpha D_s,
  \qquad m\le s\le r.
  \label{eq:app-marginal-excess-bound}
\end{equation}

For every \(m\le s\le r\), we have \(D_s\le D_m\), and hence \(R_n(H)D_s\le c_0\).
Apply \eqref{eq:app-anytime-hard-bound} with \(q=s\) and use the \(s\) largest excesses to form \(x_1,\ldots,x_s\) and \(\omega_1,\ldots,\omega_s\).
Every selected excess is at least \(a_s\), so \(\omega_i\ge s a_s/D_s\) for every \(1\le i\le s\).

If \(s a_s>\beta_\alpha D_s\), then \(\omega_i>\beta_\alpha\) for every \(i\).
Define \(\Gamma_1\)\footnote{Here
\(\Gamma_1=\left.\Gamma_\lambda\right|_{\lambda=1}\).} by
\[
  \Gamma_1(w,z):=
  \sup_{x,y>0}\{\log K_{w,z}(x,y)-(x+y)\}.
\]
For the \(x_i,\omega_i\) just obtained with \(q=s\), put \(\Delta_s:=s-\sum_{i=1}^s x_i>0\).
Since \(\min_i\omega_i>\beta_\alpha\), the endpoint estimate in the proof of \cref{lem:envelope-reduction} gives
\begin{equation}
  E_{\mathrm{end}}e^{-(2\Delta_s+x_1+x_s)}
  \le A_{\beta_\alpha},
  \label{eq:app-rankwise-endpoint}
\end{equation}
for some \(A_{\beta_\alpha}<\infty\) depending only on \(\beta_\alpha\).
Combining \eqref{eq:app-rankwise-endpoint} with the definition of
\(\Gamma_1\) gives
\[
  E_{\mathrm{end}}
  \prod_{i=1}^{s-1}K_{\omega_i,\omega_{i+1}}(x_i,x_{i+1})
  \le
  A_{\beta_\alpha}\exp\!\left\{
    2s+\sum_{i=1}^{s-1}\Gamma_1(\omega_i,\omega_{i+1})
  \right\}.
\]
Because \(\Gamma_1\) decreases in each coordinate and \(\Gamma_1(\beta_\alpha,\beta_\alpha)=-\log\beta_\alpha-1\),
\[
  2s+\sum_{i=1}^{s-1}\Gamma_1(\omega_i,\omega_{i+1})
  \le 2s+(s-1)(-\log\beta_\alpha-1).
\]
Put \(\delta:=\log\beta_\alpha-1>0\). Then $2s+(s-1)(-\log\beta_\alpha-1)=-\delta s+\log\beta_\alpha+1$.
The preceding product is therefore at most \(\widetilde A_{\beta_\alpha}e^{-\delta s}\), where \(\widetilde A_{\beta_\alpha}:=e\beta_\alpha A_{\beta_\alpha}\).
Applying \eqref{eq:app-anytime-hard-bound} with \(q=s\) gives
\[
  R_n(H)
  \ge\frac{s}{D_s E_{\mathrm{end}}
  \prod_{i=1}^{s-1}K_{\omega_i,\omega_{i+1}}(x_i,x_{i+1})}
  \ge\frac{s e^{\delta s}}{\widetilde A_{\beta_\alpha}D_s}
  \ge\frac{s e^{\delta s}}{\widetilde A_{\beta_\alpha}c_0}R_n(H),
\]
If \(\widetilde A_{\beta_\alpha}c_0<1\), then \(s\ge m\ge2\) gives \(s e^{\delta s}/(\widetilde A_{\beta_\alpha}c_0)>1\), a contradiction. Hence \eqref{eq:app-marginal-excess-bound}, and thus the first inequality in \eqref{eq:app-finite-prefix-bounds}, holds under this condition.

\smallskip
\noindent\emph{Step 2. Prove the second inequality in \eqref{eq:app-finite-prefix-bounds}.}

Assume \(\widetilde A_{\beta_\alpha}c_0<1\), so that Step~1 applies.
Put \(F_s:=D_s s^\alpha\), and choose
\[
  \widehat q\in\operatorname*{arg\,min}_{m\le s\le r}F_s.
\]
Using \eqref{eq:app-marginal-excess-bound}, \(D_{s-1}=D_s+a_s\), and \(\log(1+u)\le u\), we obtain, for every \(m<q\le r\),
\[
  \log\frac{D_m}{D_q}
  =\sum_{s=m+1}^q\log\left(1+\frac{a_s}{D_s}\right)
  \le \beta_\alpha\sum_{s=m+1}^q\frac1s
  \le \beta_\alpha\log\frac qm.
\]
Therefore
\[
  \frac{D_q}{D_m}\ge\left(\frac mq\right)^{\beta_\alpha}.
\]
We now prove that \(F_m\le A F_r\) for a constant \(A\) depending only on \(\alpha\).
Take \(A\ge\eta_\alpha^{-(\beta_\alpha-\alpha)}\).
If \(F_m>A F_r\), then \(\widehat q>m\) and
\[
  \frac{F_{\widehat q}}{F_m}<\frac1A,
  \qquad
  \frac{F_{\widehat q}}{F_m}
  =\frac{D_{\widehat q}}{D_m}
  \left(\frac{\widehat q}{m}\right)^\alpha
  \ge\left(\frac m{\widehat q}\right)^{\beta_\alpha-\alpha}.
\]
Hence \(m/\widehat q<A^{-1/(\beta_\alpha-\alpha)}\le\eta_\alpha\), so
\(m<\eta_\alpha\widehat q\).
Because the selected excesses are \(a_1,\ldots,a_{\widehat q}\), the decreasing rearrangement of the normalized weights satisfies \(\omega_j^\downarrow=\widehat q a_j/D_{\widehat q}\) for
\(1\le j\le\widehat q\).
For every \(m\le k<\widehat q\), the minimizing property of \(\widehat q\) gives
\begin{equation}
  \sum_{s=k+1}^{\widehat q}\omega_s^\downarrow
  =\frac{\widehat q}{D_{\widehat q}}
  \sum_{s=k+1}^{\widehat q}a_s
  =\widehat q\left(\frac{D_k}{D_{\widehat q}}-1\right)
  \ge \widehat q
  \left[\left(\frac{\widehat q}{k}\right)^\alpha-1\right].
  \label{eq:app-truncated-tail-from-minimizer}
\end{equation}
This inequality, \(m\le\eta_\alpha\widehat q\), and the bound \(\sum_{i=1}^{\widehat q}x_i<\widehat q\) together verify \eqref{eq:anytime-clean-condition} with \(q=\widehat q\) and \(\ell=m\).
Applying \eqref{eq:anytime-clean-target} gives
\[
  E_{\mathrm{end}}
  \prod_{i=1}^{\widehat q-1}
  K_{\omega_i,\omega_{i+1}}(x_i,x_{i+1})
  \le C_\alpha.
\]
Combining this estimate with \eqref{eq:app-anytime-hard-bound} and \(D_{\widehat q}\le D_m\le c_0/R_n(H)\) gives
\[
  R_n(H)\ge\frac{\widehat q}{C_\alpha D_{\widehat q}}
  \ge\frac{\widehat q}{C_\alpha c_0}R_n(H),
\]
If, in addition, \(C_\alpha c_0<1\), then \(\widehat q/(C_\alpha c_0)>1\), a contradiction. 
Hence \(F_m\le A F_r\) under these conditions.
Since \(D_r=B\le n+1\) and \(r\le n\),
\[
  D_m m^\alpha=F_m
  \le A F_r
  =A D_r r^\alpha
  \le A(n+1)n^\alpha.
\]
Collecting the conditions used above, choose, once and for all,
\[
  0<c_0<
  \min\left\{
    \frac14,
    \widetilde A_{\beta_\alpha}^{-1},
    C_\alpha^{-1}
  \right\}.
\]
For this choice, the preceding argument proves both inequalities in \eqref{eq:app-finite-prefix-bounds} for every \(m\) satisfying \(R_n(H)D_m\le c_0\). 
Since \(\beta_\alpha,\widetilde A_{\beta_\alpha},C_\alpha\), and \(A\) depend only on \(\alpha\), so does \(c_0\). 
Taking \(C:=\max\{\beta_\alpha,A\}\) completes Part~I.

\medskip
\noindent\textbf{Part II: The anytime transfer.}\par
We now combine Part I with \eqref{eq:Huber} and \eqref{eq:app-terminal-step-bound}.
The remaining proof adapts the finite-to-anytime transfer in \cite[Proof of Theorem~4.1]{TsaiFatkhullinZhangHe2026}.
Fix a nonnegative infinite schedule.
For each prefix \(H_n\), define \(a_1,\ldots,a_r,B,D_s\) as in Part I.

Suppose for contradiction that
\[
  R_n(H_n)=o\!\left(n^{-2(1+\alpha)/(2+\alpha)}\right).
\]
Since \(\alpha>0\), the exponent \(2(1+\alpha)/(2+\alpha)\) is greater than \(1\). 
The contradiction hypothesis and \eqref{eq:Huber} therefore imply \(h_{1:n}/n\to\infty\). 
Hence the stepsizes are unbounded, and there are arbitrarily large indices \(n\) satisfying
\begin{equation}
  h_n=\max_{1\le k\le n}h_k>1.
  \label{eq:app-prefix-maximum}
\end{equation}
Moreover, since \(D_r=B\le n+1\),
\[
  R_n(H_n)D_r
  \le (n+1)R_n(H_n)
  =o\!\left(n^{-\alpha/(2+\alpha)}\right)
  \longrightarrow 0.
\]
We may therefore fix one such \(n\), sufficiently large that both \eqref{eq:app-prefix-maximum} and \(R_n(H_n)D_r\le c_0\) hold. 
Since \(h_n\) is the largest stepsize in \(H_n\), its excess is the largest, and hence \(a_1=h_n-1\).

Define the cutoff\footnote{We explain the choice of \(m_\star\) in \cref{remark:mstar}.}
\begin{equation}
  m_\star:=\left\lfloor
  \frac{1}{4\sqrt{R_n(H_n)}}
  \right\rfloor.
  \label{eq:app-m-star}
\end{equation}
Since \(R_n(H_n)\to0\) and \(\lfloor x\rfloor\ge x/2\) for \(x\ge2\), our sufficiently large choice of \(n\) satisfies
\begin{equation}
  m_\star\ge\frac{1}{8\sqrt{R_n(H_n)}}.
  \label{eq:app-m-star-lower}
\end{equation}
If the number of long steps \(r\le m_\star\), the terminal-step bound \eqref{eq:app-terminal-step-bound} gives
\[
  \sum_{j=1}^r a_j
  \le m_\star a_1
  \le\frac14(1+h_{1:n}).
\]
Since \(h_{1:n}=(B-1)+\sum_{j=1}^r a_j\) and \(B-1\le n\), this would imply \(h_{1:n}=O(n)\), a contradiction.
Thus \(r>m_\star\).

To apply Part~I with \(m=m_\star\), we first prove that \(R_n(H_n)D_{m_\star}\le c_0\).
Suppose instead that \(R_n(H_n)D_{m_\star}>c_0\).
Since \(r>m_\star\), \(R_n(H_n)D_r\le c_0\), and \(D_s\) decreases with \(s\), there is a smallest index \(q_\star>m_\star\) such that
\[
  R_n(H_n)D_{q_\star}\le c_0.
\]
Applying both inequalities in \eqref{eq:app-finite-prefix-bounds} with \(m=q_\star\) gives
\[
  D_{q_\star-1}=D_{q_\star}+a_{q_\star}
  \le\left(1+\frac C{q_\star}\right)D_{q_\star}
  \le C(n+1)^{1+\alpha}q_\star^{-\alpha}.
\]
Using \(q_\star>m_\star\) and \eqref{eq:app-m-star-lower}, we obtain
\begin{align*}
  R_n(H_n)D_{q_\star-1}
  &\le C
  n^{1+\alpha}R_n(H_n)^{1+\alpha/2}\\
  &=C\left(
  n^{\frac{2(1+\alpha)}{2+\alpha}}R_n(H_n)
  \right)^{(2+\alpha)/2}
  =o(1),
\end{align*}
contradicting the minimality of \(q_\star\), which gives \(R_n(H_n)D_{q_\star-1}>c_0\).
Hence
\begin{equation}
  R_n(H_n)D_{m_\star}\le c_0.
  \label{eq:app-mstar-small-product}
\end{equation}

For every \(m_\star<j\le r\), \(D_j\le D_{m_\star}\), so \(R_n(H_n)D_j\le c_0\).
Applying \eqref{eq:app-finite-prefix-bounds} with \(m=j\) gives
\[
  a_j\le\frac{C D_j}{j}
  \le C(n+1)^{1+\alpha}j^{-1-\alpha}.
\]
For \(1\le j\le m_\star\), we have \(a_j\le a_1=h_n-1\), so the terminal-step bound \eqref{eq:app-terminal-step-bound} controls the first sum below.
For \(m_\star<j\le r\), sum the preceding estimate and use \eqref{eq:app-m-star-lower} for the second sum:
\begin{align*}
  \sum_{j=1}^{m_\star}a_j
  &\le m_\star a_1
  \le\frac14(1+h_{1:n}),\\
  \sum_{j=m_\star+1}^{r}a_j
  &\le C(n+1)^{1+\alpha}
  \sum_{j=m_\star+1}^{\infty}j^{-1-\alpha}\\
  &\le C(n+1)^{1+\alpha}m_\star^{-\alpha}
  \le C(n+1)^{1+\alpha}R_n(H_n)^{\alpha/2}.
\end{align*}
Using \(h_{1:n}=(B-1)+\sum_j a_j\) and \(B-1\le n\), we absorb \(\tfrac14 h_{1:n}\) into the left-hand side to obtain
\begin{equation}
  h_{1:n}\le
  C\left(n+n^{1+\alpha}R_n(H_n)^{\alpha/2}\right).
  \label{eq:app-total-stepsize-bound}
\end{equation}
Multiplying by \(R_n(H_n)\) gives
\begin{align*}
  R_n(H_n)h_{1:n}
  &\le C\left(
    nR_n(H_n)+n^{1+\alpha}R_n(H_n)^{1+\alpha/2}
  \right)\\
  &=C\left(
    nR_n(H_n)+
    \left(n^{\frac{2(1+\alpha)}{2+\alpha}}R_n(H_n)\right)^{(2+\alpha)/2}
  \right)
  =o(1).
\end{align*}
But \eqref{eq:Huber} gives \(R_n(H_n)(1+2h_{1:n})\ge1\), a contradiction.
Therefore
\[
  \limsup_{n\to\infty}
  n^{\frac{2(1+\alpha)}{2+\alpha}}R_n(H_n)>0.
\]
\end{proof}

\begin{remark}\label{remark:mstar}
We explain the choice of \(m_\star\) in \eqref{eq:app-m-star}.
For the chosen \(n\), \(h_n=\max_{1\le k\le n}h_k>1\), so \(a_1=h_n-1\). 
The terminal-step bound \eqref{eq:app-terminal-step-bound} gives $a_1\le(1+h_{1:n})\sqrt{R_n(H_n)}$.
Since \(a_j\le a_1\), every cutoff \(m\) satisfies
\[
  \sum_{j=1}^{m}a_j
  \le m(1+h_{1:n})\sqrt{R_n(H_n)}.
\]
The head estimate only requires \(m_\star\le \varepsilon R_n(H_n)^{-1/2}\), where \(\varepsilon\in(0,1)\) is fixed. 
Indeed, the preceding inequality then gives \(\sum_{j=1}^{m_\star}a_j\le\varepsilon(1+h_{1:n})\), which can be absorbed in \eqref{eq:app-total-stepsize-bound} since \(\varepsilon<1\).
In \eqref{eq:app-m-star}, we take \(\varepsilon=1/4\). Part~I then gives
\[
  \sum_{j>m_\star}a_j
  \le C(n+1)^{1+\alpha}m_\star^{-\alpha}
  \le C(n+1)^{1+\alpha}R_n(H_n)^{\alpha/2}.
\]
Combining the head and tail estimates with \(h_{1:n}=(B-1)+\sum_{j\le m_\star}a_j+\sum_{j>m_\star}a_j\) recovers \eqref{eq:app-total-stepsize-bound}.
\end{remark}

\subsection{Proof of \cref{lem:truncated-product}}
\label{app:proof-truncated-product}

We adapt the proof of \cref{lem:path-estimate} to establish \eqref{eq:truncated-bound}.

\begin{proof}
\medskip
\noindent\textbf{Step 1. Eliminate the \(x_i\)'s.}\par

Taking \(k=q-1\) in \eqref{eq:anytime-clean-condition} gives \(\min_i\omega_i\ge\alpha\), the only consequence of the tail conditions used in the proof of \cref{lem:envelope-reduction}. 
Hence the same proof gives
\[
  E_{\mathrm{end}}
  \prod_{i=1}^{q-1}K_{\omega_i,\omega_{i+1}}(x_i,x_{i+1})
  \le C_{\alpha,\lambda}
  \exp\!\left\{
    2\lambda q+
    \sum_{i=1}^{q-1}\Gamma_\lambda(\omega_i,\omega_{i+1})
  \right\}.
\]
It remains to bound the sum in the exponent.

\medskip

\noindent\textbf{Step 2. Set aside the \(\ell\) unconstrained largest weights.}\par
We first split the consecutive-pair sum from Step~1 into an odd matching and an even matching, according to whether \(i\) is odd or even.
\[
  (\omega_1,\omega_2),(\omega_3,\omega_4),\ldots
  \qquad\text{and}\qquad
  (\omega_2,\omega_3),(\omega_4,\omega_5),\ldots.
\]
The tail inequalities in \eqref{eq:anytime-clean-condition} do not constrain the \(\ell\) largest weights \(\omega_1^\downarrow,\ldots,\omega_\ell^\downarrow\).
Mark their positions in the chronological sequence and delete, from each matching, every pair containing one of these positions.
Each marked position belongs to at most one pair in each matching, so at most \(2\ell\) terms are deleted in total.

Let \(M_0\) and \(M_1\) denote the remaining odd and even matchings, respectively, and put
\[
  k_\nu:=|M_\nu|,
  \qquad
  d_\nu:=q-2k_\nu.
\]
Thus \(d_\nu\) is the number of positions not used by \(M_\nu\).
Since \(M_\nu\) contains none of the \(\ell\) marked positions, \(d_\nu\ge\ell\).
Before any pair is deleted, each of the odd and even matchings leaves at most two positions unused.
Deleting a pair creates two additional unused positions, and at most \(\ell\) pairs are deleted from either matching.
Hence
\begin{equation}
  \ell\le d_\nu\le2\ell+2.
  \label{eq:app-unused-positions}
\end{equation}
Recall the comparison weights \(\bar w_s^{(q)}\) from
\eqref{eq:reference-weights}. They satisfy
\(\bar w_q^{(q)}\le\cdots\le\bar w_2^{(q)}\) and, for every
\(\ell\le m<q\),
\[
  \sum_{s=m+1}^q\bar w_s^{(q)}
  =q\left[\left(\frac qm\right)^\alpha-1\right].
\]
Hence \eqref{eq:anytime-clean-condition} gives, for every \(1\le j\le q-\ell\),
\begin{equation}
  \sum_{s=q-j+1}^q\omega_s^\downarrow
  \ge
  \sum_{s=q-j+1}^q\bar w_s^{(q)}.
  \label{eq:app-truncated-majorization-tails}
\end{equation}

\medskip

\noindent\textbf{Step 3. Identify the maximizing matching and compare it with the integral.}\par
After ordering the remaining weights increasingly, each \(M_\nu\) is a \(k_\nu\)-edge matching on \((\omega_q^\downarrow,\ldots,\omega_{\ell+1}^\downarrow)\).
The proof of \cref{lem:path-estimate} shows that \(\Phi_{k_\nu,\lambda}\) is symmetric, convex, and coordinatewise decreasing.
Hence \cref{lem:majorization-comparison}, \eqref{eq:app-truncated-majorization-tails}, and \eqref{eq:extremal-pairing} give, for \(\nu\in\{0,1\}\),
\begin{equation}
\begin{aligned}
  \sum_{\{i,j\}\in M_\nu}
  \Gamma_\lambda(\omega_i,\omega_j)
  &\le
  \Phi_{k_\nu,\lambda}
  (\omega_q^\downarrow,\ldots,\omega_{\ell+1}^\downarrow)\\
  &\le
  \Phi_{k_\nu,\lambda}
  (\bar w_q^{(q)},\ldots,\bar w_{\ell+1}^{(q)})\\
  &=
  \sum_{j=1}^{k_\nu}\Gamma_\lambda\!\left(
    \bar w_{q+1-j}^{(q)},
    \bar w_{d_\nu+j}^{(q)}
  \right).
\end{aligned}
  \label{eq:app-truncated-extremal-pairing}
\end{equation}

We next compare the explicit matching sum in \eqref{eq:app-truncated-extremal-pairing} with its corresponding Riemann integral.
Recall that \(W_\alpha(t)=\alpha t^{-1-\alpha}\) for \(0<t\le1\).
By \cref{lem:uniform-lipschitz},
\[
  g_{\alpha,\lambda}(t)
  :=\Gamma_\lambda(W_\alpha(t),W_\alpha(1-t)),
  \qquad 0<t\le\frac12,
\]
extends continuously to \(t=0\). Define
\[
  I_{\alpha,\lambda}
  :=\int_0^{1/2}g_{\alpha,\lambda}(t)\,dt.
\]
We use the following shifted Riemann-sum estimate.
For every integer \(0\le k\le\lfloor(q-1)/2\rfloor\), set \(d:=q-2k\), so \(d\ge1\).
Then
\begin{equation}
  \sum_{j=1}^{k}
  \Gamma_\lambda\!\left(
    \bar w_{q+1-j}^{(q)},
    \bar w_{d+j}^{(q)}
  \right)
  \le qI_{\alpha,\lambda}+C_{\alpha,\lambda}(d+1).
  \label{eq:app-shifted-riemann}
\end{equation}
Set \(M_{\alpha,\lambda}:=\max\{0,\Gamma_\lambda(\alpha,\alpha)\}\).
To prove \eqref{eq:app-shifted-riemann}, first suppose \(d\ge q/4\).
Since every \(\bar w_s^{(q)}\ge\alpha\), each summand is at most \(M_{\alpha,\lambda}\).
Therefore
\[
  \sum_{j=1}^{k}
  \Gamma_\lambda(\bar w_{q+1-j}^{(q)},\bar w_{d+j}^{(q)})
  -qI_{\alpha,\lambda}
  \le q\left(\frac{M_{\alpha,\lambda}}2+|I_{\alpha,\lambda}|\right)
  \le C_{\alpha,\lambda}d.
\]
Now suppose \(d<q/4\).
By the mean-value theorem, for each \(2\le s\le q\), there is \(\xi_s\in(s-1,s)\) such that
\[
  \bar w_s^{(q)}=W_\alpha(\xi_s/q).
\]
The Lipschitz estimate in \cref{lem:uniform-lipschitz} applies to \((u,v)\mapsto\Gamma_\lambda(W_\alpha(u),W_\alpha(v))\) on \([0,1/2]\times[1/2,1]\).

For each \(j\) such that \(u_j:=(d+j)/q\le1/2\),
\[
  \left|\frac{\xi_{d+j}}q-u_j\right|\le\frac1q,
  \qquad
  \left|\frac{\xi_{q+1-j}}q-(1-u_j)\right|
  \le\frac{d+1}{q}.
\]
Symmetry and the Lipschitz estimate give
\[
  \Gamma_\lambda\!\left(
    \bar w_{q+1-j}^{(q)},\bar w_{d+j}^{(q)}
  \right)
  \le
  g_{\alpha,\lambda}(u_j)
  +C_{\alpha,\lambda}\frac{d+1}{q}.
\]
Since \(1\le j\le k=(q-d)/2\), there are at most \(d+1\) indices with \(u_j>1/2\).
For these indices, both \(\xi_{d+j}/q\) and \(\xi_{q+1-j}/q\) lie in \([1/4,1]\), because \(d<q/4\).
Thus both arguments of \(\Gamma_\lambda\) lie in \([\alpha,\alpha\,4^{1+\alpha}]\), and the total contribution of these terms is at most \(C_{\alpha,\lambda}(d+1)\).
For the other indices, the grid \(u_j\) has mesh \(1/q\) and begins at \((d+1)/q\).
The portion of \([0,1/2]\) not covered by the corresponding Riemann cells has total length at most \((d+1)/q\).
The boundedness and Lipschitz continuity of \(g_{\alpha,\lambda}\) therefore give
\[
  \sum_{j:\,u_j\le1/2}g_{\alpha,\lambda}(u_j)
  \le qI_{\alpha,\lambda}+C_{\alpha,\lambda}(d+1).
\]
Summing the error \(C_{\alpha,\lambda}(d+1)/q\) over these indices and adding the at most \(d+1\) terms with \(u_j>1/2\) proves \eqref{eq:app-shifted-riemann}.

Apply \eqref{eq:app-shifted-riemann} to the odd and even matchings \(M_0\) and \(M_1\), with \((k,d)=(k_\nu,d_\nu)\).
Add back the at most \(2\ell\) terms deleted in Step~2.
Since \(\omega_i\ge\alpha\) for every \(i\) by Step~1, each deleted term is at most \(M_{\alpha,\lambda}\).
Since \(d_\nu\le2\ell+2\), we obtain
\begin{equation}
  \sum_{i=1}^{q-1}\Gamma_\lambda(\omega_i,\omega_{i+1})
  \le 2qI_{\alpha,\lambda}
  +C_{\alpha,\lambda}(\ell+1).
  \label{eq:app-truncated-gamma-path}
\end{equation}
Combining the estimate from Step~1 with \eqref{eq:app-truncated-gamma-path} and using \(J(\alpha,\lambda)=2\lambda+2I_{\alpha,\lambda}\) from \eqref{eq:J-definition} proves \eqref{eq:truncated-bound}.
\end{proof}

\section{Extension to Possibly Negative Stepsize Schedules}
\label[appendix]{app:negative-stepsizes}

The preceding analysis proves the lower bounds for nonnegative stepsize schedules.
We now remove this restriction.
Given a possibly negative stepsize schedule, we associate with it a nonnegative schedule obtained from the running maxima of its partial sums.
The lemma below shows that the lower bounds on $R_n$ used in the preceding proofs remain valid for $R_n(\widetilde H)$ when their right-hand sides are computed from this associated nonnegative schedule.

\begin{lemma}
\label{lem:running-maximum-lifting}
Let $\widetilde H=(\widetilde h_1,\ldots,\widetilde h_n)\in\mathbb R^n$.
Set $P_0=M_0=0$ and define
\begin{equation}
  P_k:=\sum_{j=1}^k\widetilde h_j,
  \qquad
  M_k:=\max_{0\le j\le k}P_j,
  \qquad
  h_k:=M_k-M_{k-1}.
  \label{eq:running-maximum-schedule}
\end{equation}
Equivalently, $h_k=(P_k-M_{k-1})_+$ and $\sum_{j=1}^k h_j=M_k$. Thus $H$ advances only when the signed cumulative stepsize $P_k$ reaches a new maximum.
Then $H=(h_1,\ldots,h_n)$ is nonnegative, and the following holds.
\begin{enumerate}[label=\textup{(\roman*)}]
  \item For every $0<t_1<\cdots<t_q\le n$ with $h_{t_i}>1$, the right-hand side of \eqref{eq:hard-product}, with all quantities computed from $H$, is a lower bound on $R_n(\widetilde H)$.
  \item We have
  \begin{equation}
    R_n(\widetilde H)
    \ge \frac{1}{1+2M_n}
    =\frac{1}{1+2\sum_{k=1}^n h_k}.
    \label{eq:signed-huber}
  \end{equation}
  \item If $h_n>1$, then
  \begin{equation}
    R_n(\widetilde H)
    \ge\left(\frac{h_n-1}{1+M_{n-1}}\right)^2
    =\left(
      \frac{h_n-1}{1+\sum_{k=1}^{n-1}h_k}
    \right)^2.
    \label{eq:signed-terminal-step}
  \end{equation}
\end{enumerate}
\end{lemma}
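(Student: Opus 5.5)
We would prove all three parts with one mechanism. Take each hard instance already used for nonnegative schedules: the Huber function of \cref{huberfact}, the Moreau-envelope function $f_C$ of \cref{app:hard-functions} built from $H$, and the asymmetric Huber loss \eqref{eq:app-asymmetric-huber}. Keep the instance exactly as constructed for the running-maximum schedule $H$, and run GD on it with the signed schedule $\widetilde H$. Write $x_{k+1}$ for the iterates under $H$ and $\widetilde x_{k+1}$ for those under $\widetilde H$. We will prove by induction on $k$ the invariant
\[
  \widetilde x_{k+1}=x_{k+1}+(M_k-P_k)\,g_{(k)},
  \qquad g_{(k)}:=\nabla f(x_{k+1}),
  \qquad \nabla f(\widetilde x_{k+1})=g_{(k)} .
\]
The lag $M_k-P_k\ge0$ is how far the signed cumulative stepsize trails its running maximum. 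Within a block the gradient is constant along the $H$-trajectory, and the invariant just says that the signed iterate sits $M_k-P_k$ units of ``time'' behind along the current gradient direction.

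For the induction step, suppose $\nabla f(\widetilde x_k)=g_{(k-1)}$. There are two cases.
\begin{itemize}
  \item If $P_k\le M_{k-1}$, then $h_k=0$, so $x_{k+1}=x_k$. The new lag is $M_{k-1}-P_k$, and the invariant holds by direct computation.
  \item If $P_k>M_{k-1}$, the signed step first cancels the lag, bringing the iterate back to $x_k$, and then moves by $h_k$ exactly as GD under $H$ does. The lag becomes $0$.
\end{itemize}
The inductive gradient claim reduces to one geometric fact: if $\nabla f(x)=g$, then $\nabla f(x+sg)=g$ for all $s\ge0$.

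For $f_C$ with $\nabla f_C=\Pi_C$, this fact reads
\[
  \langle x+sg-g,\,v-g\rangle=\langle x-g,v-g\rangle+s\langle g,v-g\rangle\le0
\]
for the generators $v\in\{0,g_1,\dots,g_{q+1}\}$. It therefore suffices to check $\langle g_i,v-g_i\rangle\le0$, that is, $\langle g_i,g_j\rangle\le\lVert g_i\rVert^2$ and $0\le\lVert g_i\rVert^2$.

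This check is the step we expect to be the main obstacle. It requires the explicit structure $g_i=(X_i-X_{i+1})/(S_i+h_{t_i})$ with orthonormal anchors $X_i=\lambda_ie_i$. With this structure, $\langle g_i,g_j\rangle=0$ when $|i-j|\ge2$, and the adjacent inner products equal a negative multiple of $\lambda_{i+1}^2$. Moreover, $g_{q+1}$ is aligned with $e_{q+1}$ and so meets only $g_q$, again with negative sign. Hence the required inequalities hold.

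We must also ensure the invariant applies to every iterate actually visited, including the initial lag before the first advance, where $x_1=X_1$ and the gradient is $g_1$. Since $h$ is zero in lagging steps, the trajectory only revisits the $H$-iterates shifted along a ray, so no new projection conditions arise. For the one-dimensional Huber and asymmetric Huber losses the same fact is immediate: on an affine piece, moving in the direction of the slope stays on that piece.

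Finally, compare the objective values. By convexity,
\[
  f(\widetilde x_{n+1})\ge f(x_{n+1})+(M_n-P_n)\lVert g_{(n)}\rVert^2\ge f(x_{n+1}).
\]
The minimizer and the initial distance are those of the instance built for $H$. Therefore every lower bound obtained for $H$ transfers to $R_n(\widetilde H)$.
\begin{itemize}
  \item Part (i) follows from \cref{lem:hard-product} applied to $H$.
  \item Part (ii) follows from \cref{huberfact} applied to $H$, using $\sum_k h_k=M_n$.
  \item Part (iii) follows from \eqref{eq:app-asymmetric-huber-bound} with $m=n$ applied to $H$. In that instance, the crossing step is $h_n>1$ with $P_n=M_n$, so the lag at time $n$ is zero and the crossing of the quadratic region is exactly as in the nonnegative case.
\end{itemize}
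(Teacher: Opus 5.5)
Your proposal is correct and follows the paper's proof of part (i) essentially step for step. The shared ingredients are the lag $d_k=M_k-P_k$, the offset $\widetilde x_k=\bar x_k+d_{k-1}g_i$ along the current block gradient, the ray property derived from $\langle g_i,g_j\rangle\le0$ for $i\ne j$, and the reset of the lag whenever the running maximum advances. Your convexity inequality replaces the paper's identity \eqref{eq:function-shift} in the final comparison, which is equally valid.

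For (ii) and (iii) the paper does not invoke the transfer. It directly computes $u_k=1-\delta P_{k-1}\ge\delta$ on the Huber and asymmetric Huber instances, with $\delta=(1+2M_n)^{-1}$ and $\delta=(1+M_{n-1})^{-1}$ respectively. This is just the one-dimensional case of your invariant. It also spares you the claim, unverified in your write-up though true, that the $H$-iterates of the asymmetric Huber instance all lie on its right affine piece.
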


\pagebreak[3]
\begin{proof}
\medskip\noindent\textbf{Part (i).}\enspace
Fix a selection of long steps of $H$, and let $f_C$ be the hard function used to prove \cref{lem:hard-product}.
See \cref{app:hard-functions} for its construction. The corresponding $H$-trajectory\footnote{By the $H$-trajectory, we mean the GD trajectory on $f_C$ generated from the prescribed initial point by the schedule $H$. The $\widetilde H$-trajectory is defined analogously.} is illustrated in \cref{fig:hard-trajectory}.
To transfer the lower bound, we show that the $\widetilde H$-trajectory follows the same sequence of gradient blocks as the $H$-trajectory on $f_C$. The two trajectories coincide at each transition from the $i$th block to the $(i+1)$st.

We use the notation of \cref{app:hard-functions}. Recall that $C=\operatorname{conv}\{0,g_1,\ldots,g_{q+1}\}$ and $f_C(x)=\max_{g\in C}\{\langle g,x\rangle-\frac12\lVert g\rVert^2\}$. For every $x$, the maximum of $\langle g,x\rangle-\frac12\lVert g\rVert^2$ over $g\in C$ is attained uniquely at $g=\Pi_C(x)$, and we have $\nabla f_C(x)=\Pi_C(x)$. Moreover, $\Pi_C(y)=g_i$ if and only if $\langle y-g_i,v-g_i\rangle\le0$ for every $v\in C$.
We first prove that shifting a point in the direction $+g_i$ preserves its projection whenever that projection is $g_i$.
The formulas for the block gradients in \cref{app:hard-functions} give $\langle g_i,g_j\rangle\le0$ whenever $i\ne j$.
It follows that $\langle g_i,v-g_i\rangle\le0$ for every vertex of $C$, and hence for every $v\in C$.
Thus, if $\Pi_C(y)=g_i$ and $a\ge0$, then
\[
  \langle y+ag_i-g_i,v-g_i\rangle
  =\langle y-g_i,v-g_i\rangle
   +a\langle g_i,v-g_i\rangle
  \le0
  \qquad\text{for every }v\in C.
\]
The displayed inequality is precisely the projection condition for $g_i$. Therefore,
\begin{equation}
  \Pi_C(y+ag_i)=g_i.
  \label{eq:projection-preservation}
\end{equation}
Since $\Pi_C(y)=\Pi_C(y+ag_i)=g_i$ and the maximum of $\langle g,x\rangle-\frac12\lVert g\rVert^2$ over $g\in C$ is attained uniquely at $g=\Pi_C(x)$, we have
\begin{equation}
  f_C(y+ag_i)=\langle g_i,y+ag_i\rangle-\frac12\lVert g_i\rVert^2=f_C(y)+a\lVert g_i\rVert^2.
  \label{eq:function-shift}
\end{equation}

We now use \eqref{eq:projection-preservation} to compare the trajectories generated by $H$ and $\widetilde H$ on the same hard function $f_C$.
Set $d_k:=M_k-P_k\ge0$.
The quantity $d_k$ measures how far $P_k$ lies below its running maximum, and the definitions give
\[
  d_k=d_{k-1}+h_k-\widetilde h_k,
  \qquad
  h_k>0\Longrightarrow d_k=0.
\]
Let $(\bar x_k)$ and $(\widetilde x_k)$ be the trajectories generated by $H$ and $\widetilde H$, respectively, on the hard function $f_C$ and from the same initial point.
We prove by induction over the blocks and the updates within each block that, whenever update $k$ lies in the $i$th block,
\begin{equation}
  \widetilde x_k=\bar x_k+d_{k-1}g_i.
  \label{eq:trajectory-offset}
\end{equation}
For the first update, \eqref{eq:trajectory-offset} is immediate.
Suppose it holds at update $k$ in the $i$th block. Since the $H$-trajectory has gradient $g_i$, \eqref{eq:projection-preservation} shows that the $\widetilde H$-trajectory also has gradient $g_i$. Hence
\[
  \widetilde x_{k+1}
  =\bar x_{k+1}
   +(d_{k-1}+h_k-\widetilde h_k)g_i
  =\bar x_{k+1}+d_kg_i,
\]
where the last equality uses the identity for $d_k$ above.
At a selected index $t_i$, the condition $h_{t_i}>1$ means that the running maximum increases. Hence $P_{t_i}=M_{t_i}$ and $d_{t_i}=0$. The same identity also gives
\[
  \widetilde h_{t_i}=d_{t_i-1}+h_{t_i},
  \qquad
  \widetilde x_{t_i+1}=\bar x_{t_i+1}.
\]
The two trajectories therefore meet at the boundary between the $i$th and $(i+1)$st blocks, so the induction continues with gradient $g_{i+1}$.

Applying the induction through update $n$ gives $\widetilde x_{n+1}=\bar x_{n+1}+d_ng_{q+1}$. By the construction of $f_C$, $\Pi_C(\bar x_{n+1})=g_{q+1}$, and therefore \eqref{eq:function-shift} gives
\[
  f_C(\widetilde x_{n+1})
  =f_C(\bar x_{n+1})+d_n\lVert g_{q+1}\rVert^2
  \ge f_C(\bar x_{n+1}).
\]
Both trajectories are generated on the same hard function from the same initial point, with the same minimizer.
Together with the preceding inequality, this shows that, on $f_{\mathcal C}$, the normalized gap generated by $\widetilde H$ is at least that generated by $H$. The latter is bounded below by the right-hand side of \eqref{eq:hard-product}. Since $R_n(\widetilde H)$ is the supremum over all admissible functions and initial points, the same lower bound holds for $R_n(\widetilde H)$. This proves Part~\textup{(i)}.

\vspace{1cm}

The remaining two statements follow from one-dimensional Huber functions.

\medskip\noindent\textbf{Part (ii).}\enspace
For \eqref{eq:signed-huber}, take $\delta=(1+2M_n)^{-1}$ in the Huber function from \cref{huberfact} and start from $u_1=1$.
An induction shows that, for every $1\le k\le n$,
\[
  u_k=1-\delta P_{k-1}\ge1-\delta M_n\ge\delta,
\]
and hence every queried gradient is $\delta$.
Thus $u_{n+1}=1-\delta P_n\ge1-\delta M_n$.
With minimizer $u^\star=0$, we have $\frac12|u_1-u^\star|^2=1/2$. Since $1-\delta M_n\ge\delta$ and $\phi_\delta$ is increasing on $[\delta,\infty)$, the definition of $R_n$ gives
\[
  R_n(\widetilde H)
  \ge2\phi_\delta(u_{n+1})
  \ge2\phi_\delta(1-\delta M_n)
  =\delta.
\]

\medskip\noindent\textbf{Part (iii).}\enspace
Suppose $h_n>1$.
Then $P_n=M_n=M_{n-1}+h_n$.
Set
\[
  \delta:=\frac{1}{1+M_{n-1}},
  \qquad
  \varepsilon:=\delta(h_n-1),
\]
and use the asymmetric Huber function in \eqref{eq:app-asymmetric-huber}.
Since $P_{k-1}\le M_{n-1}$ for every $1\le k\le n$, we have $u_k=1-\delta P_{k-1}\ge\delta$, so every queried gradient equals $\delta$, while $u_{n+1}=1-\delta P_n=-\varepsilon$.
Consequently,
\[
  R_n(\widetilde H)
  \ge2\psi_{\varepsilon,\delta}(-\varepsilon)
  =\varepsilon^2,
\]
which proves \eqref{eq:signed-terminal-step}.
\end{proof}

\paragraph{Completing the non-anytime proof for possibly negative stepsizes.}
Fix $\widetilde H\in\mathbb R^n$ and define $H$ by \eqref{eq:running-maximum-schedule}.
In the proof of \cref{lem:fixed-sequence-reduction}, Parts (i) and (ii) of \cref{lem:running-maximum-lifting} give the required lower bounds on $R_n(\widetilde H)$.
Their right-hand sides are exactly the same as those of the corresponding bounds on $R_n(H)$, with every quantity computed from $H$.
After these replacements, every subsequent step concerns only $H$ and is unchanged.
This completes the proof of \cref{thm:fixed-horizon} for every $\widetilde H\in\mathbb R^n$.

\paragraph{Completing the anytime proof for possibly negative stepsizes.}
Let $(\widetilde h_k)_{k\ge1}$ be an infinite schedule.
We similarly define $P_k$, $M_k$, and $h_k$ by \eqref{eq:running-maximum-schedule} for all $k\ge1$, and set $H=(h_k)_{k\ge1}$.
For each $n$, apply \cref{lem:running-maximum-lifting} to the prefix $\widetilde H_n$.
Parts (i)--(iii) give the three lower bounds on $R_n(\widetilde H_n)$ required in the proof of \cref{lem:anytime-sequence-reduction}.
Their right-hand sides are exactly the same as those of the corresponding bounds on $R_n(H_n)$, with every quantity computed from $H_n$.
After these replacements, every subsequent step concerns only the nonnegative schedule $H$ and is unchanged.
This completes the proof of \cref{thm:anytime}.

\end{document}